\theoremstyle{plain}
\newtheorem{thm}{Theorem}[section]
\newtheorem{lem}[thm]{Lemma}
\newtheorem{cor}[thm]{Corollary}
\theoremstyle{definition}
\newtheorem{defi}[thm]{Definition}
\newtheorem{rem}[thm]{Remark}
\newtheorem{exam}{Example}
\newcommand{\R}{\mathbb R}
\newcommand{\Z}{\mathbb Z}
\newcommand{\F}{\mathbb F}
\newcommand{\nn}{\vskip 0.2cm}
\newcommand{\n}{\vskip 0.1cm}
\begin{document}

\title [\ ] {Cubes and Generalized Real Bott Manifolds}

\author{Li Yu}
\address{Department of Mathematics and IMS, Nanjing University, Nanjing, 210093, P.R.China
  \newline
     \textit{and}
 \newline
    \qquad  Department of Mathematics, Osaka City University, Sugimoto,
     Sumiyoshi-Ku, Osaka, 558-8585, Japan}

 \email{yuli@nju.edu.cn}

%\date{July 21, 2008}

\keywords{facets-pairing structure, generalized
   real Bott manifold, small cover, real toric orbifold, cube}

 \thanks{2010 \textit{Mathematics Subject Classification}. 57S17, 57S25,
57N16, 05B99\\
  This work is partially supported by
 the Japanese Society for
the Promotion of Sciences (JSPS grant no. P10018) and
 Natural Science Foundation of China (grant no.11001120).}

 %\subjclass[2000]{57S17, 57S25, 57N16, 05B99}

 \begin{abstract}
    We define a notion of facets-pairing structure and its seal space on
    a nice manifold with corners. We will study
   facets-pairing structures on any cube in detail and investigate
  when the seal space of a facets-pairing structure on a cube is a closed manifold.
  In particular, for any $n\times n$ binary matrix $A$ with zero diagonal,
  there is a canonical facets-pairing structure $\mathcal{F}_A$ on the $n$-dimensional
  cube. We will show that all the closed manifolds that we can obtain from the seal spaces of
  such $\mathcal{F}_A$'s are neither more nor less than all the generalized real Bott
  manifolds --- a special class of real toric manifolds introduced by Choi,
  Masuda and Suh.
  \end{abstract}

\maketitle

  \section{Introduction}

   The motivation of our study comes from the following
    examples.\vskip .2cm

    \begin{exam}
      If we glue the boundary of a square in
       the pictures in Figure~\ref{p:three-Panel-Structures} according to the colors and arrows on
        the edges, we will get
      torus $T^2$, Klein bottle $K^2$, real projective plane $\R P^2$ and sphere $S^2$
       respectively.
      Notice that $T^2$ and $K^2$ admit flat Riemannian metric.
   \end{exam}

       \begin{figure}
          % Requires \usepackage{graphicx}
         \includegraphics[width=0.55\textwidth]{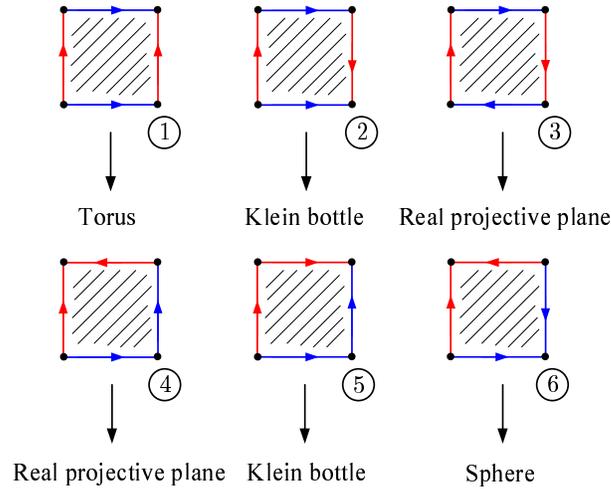}
          \caption{Facets-pairing structures on a square  }\label{p:three-Panel-Structures}
      \end{figure}

      From these examples, we may ask the following questions.\nn

 \noindent \textbf{Question 1:}
    How many different ways are there to pair all the
    facets (i.e. codimension-one faces) of an $n$-dimensional cube and how to classify
    them?\nn

\n

 \noindent \textbf{Question 2:}
   What closed manifolds can we obtain by gluing
  the facets of
  an $n$-dimensional cube according to different ways of pairing its facets
  and how to classify
  these closed manifolds up to homeomorphism? \vskip .2cm

  In this paper, the way of
   pairing the facets of a cube will be formulated into a
  general notion called \textit{facets-pairing structure} on any nice manifold with
  corners. But we will mainly study facets-pairing structures on
  cubes.
  In particular, given an $n\times n$ binary matrix $A$ with zero diagonal entries,
  we can define a canonical
  facets-pairing structure $\mathcal{F}_A$ on the $n$-dimensional
  cube. We will see that there is a nice correspondence between the geometric properties of
  $\mathcal{F}_A$ and algebraic properties of $A$, which allows us
  to answer the Question 1 and Question 2 for these
  facets-pairing structures.
   \nn

   The paper is divided into two parts. The first part is from section 2 to section 5.
   In section 2, we will introduce the concept
  of a \textit{facets-pairing structure} and its \textit{seal space}
  (see Definition~\ref{Def:Facets-Pairing-Struc} and Definition~\ref{Defi:Seal-Space})
    on a nice manifold with corners.
  Then we introduce two descriptive notions --- ``\textit{strong}'' and
  ``\textit{perfect}'' ---
  to describe a facets-pairing structure in different aspects
   (see Definition~\ref{Def:Strong-FP-Struc} and Definition~\ref{Def:Perfect-FP-Struc}).
    These two notions play a central role in our study throughout this
    paper.\nn

  In section 3, we will begin studying the facets-pairing structures
  on cubes. But mainly we will study those facets-pairing structures
  on cubes with some mild restrictions, called \textit{regular} facets-pairing structures
  (see Definition~\ref{Def:Regular-FP-Struc}).
  We will see in section 3 that any regular facets-pairing structure
   on an $n$-dimensional cube
    can be represented by $2n+1$ signed permutations on
   $\{ 1,-1,\cdots, n , -n\}$.  So the
     classification of regular facets-pairing structures on a cube
     is equivalent to a purely algebraic problem
  (see Theorem~\ref{thm:Main-2-Classify}).
   In section 4, we will study
  the relationship between the strongness and perfectness for regular facets-pairing
  structures on a cube.  The main result
   is: any perfect regular facets-pairing structure on a cube
  must be strong (see Theorem~\ref{thm:Perfect-Strong}).
  In section 5, we will show that the seal space of any perfect
   regular facets-pairing structure on a cube is
  always a closed manifold which admits a flat Riemannian metric.
  So we can construct many interesting
   closed flat Riemannian manifolds from cubes in this way. Hopefully,
   this construction will help us to understand the
   topology of these flat Riemannian manifolds. \nn

   The second part of this paper is from section 6 to section 9.
   In this part, we will use the theory developed in the first part to
   study a special class of regular
    facets-pairing structures $\mathcal{F}_A$
   on a cube, where $A$ is a binary square matrix with zero diagonal.
   In section 6,
    we will study the relationship between
    such $\mathcal{F}_A$'s and real Bott manifolds. We find that real Bott
     manifolds are exactly the seal spaces of all those
     $\mathcal{F}_A$'s which are perfect (see Theorem~\ref{thm:Main-4}).
   In addition, we obtain a simple algebraic test on the matrix $A$
   which tells us when the facets-pairing structure $\mathcal{F}_A$ is strong (see
   Theorem~\ref{thm:Main-5}).
    In section 7, we show that the seal space of an arbitrary
    $\mathcal{F}_A$ can be constructed via another method
    called \textit{glue-back construction}.
    This relates our study of facets-pairing structures to toric topology.\nn

    In section 8, we will study the singularity that might
    occur in any glue-back construction. The discussion will help us to
    determine whether the seal space of a given $\mathcal{F}_A$ is a closed manifold
    directly from the matrix $A$ (see Theorem~\ref{thm:Main-8}).
   In section 9, we will see how to view any generalized real
  Bott manifold as the seal space of some $\mathcal{F}_A$.
  A somewhat unexpected result is that the generalized real Bott manifolds
  are exactly all the closed manifolds that we can obtain from the seal spaces of
  these $\mathcal{F}_A$'s
  (see Theorem~\ref{thm:Main-6} and Theorem~\ref{thm:Main-7} and the summary at the end).
  So this gives another reason why generalized real Bott manifolds
  are naturally the ``extension'' of real Bott manifolds. In
  addition, this new viewpoint on generalized real Bott manifolds
  should be useful for us to study the topology of
  generalized real Bott manifolds in the future.\\

 \section{Facets-pairing Structure}
     A manifold with corners $W^n$ is called \textit{nice} if any codimension-$l$ face
     of $W^n$ meets exactly $l$ different
    \textit{facets} (i.e. codimension-one faces) of $W^n$. In this paper, we will use
   $\mathcal{S}_F(W^n)$ to denote the set of all facets of $W^n$. \nn

   \begin{rem}
        A nice manifold with corners may have no $0$-dimensional
      faces (vertices) and its faces are not necessarily contractible.
     See~\cite{Da83} or~\cite{BP02} for a detailed introduction
     to manifolds with corners and related
     concepts.
     \nn
   \end{rem}

   To make the words ``pairing all the facets of an $n$-dimensional cube''
   in the Question 1 have more strict mathematical meaning, we
  formulate the following concept on any nice manifold with
  corners.\nn

 \begin{defi}[Facets-Pairing Structure]
      \label{Def:Facets-Pairing-Struc}
          Suppose $W^n$ is a nice manifold with corners (may not be connected)
          and suppose all the facets of $W^n$ satisfy the following
        two conditions:\n
           \begin{enumerate}

            \item[(I)] each facet $F\subset \partial W^n$ is uniquely paired with a
            facet $F^*\subset \partial W^n$ (it is possible that $F^*=F$)
         and there are face-preserving
            homeomorphisms $\tau_F : F \rightarrow F^*$ and
            $\tau_{F^*} : F^* \rightarrow F$ such that $\tau_{F^*} =
            \tau^{-1}_F$ (here $F$ and $F^*$ themselves are considered as
            manifolds with corners).
             If $F^* \neq  F$, we call $\widehat{F} = \{ F, F^* \}$
             a \textit{facet pair} and call $F^*$ the \textit{twin facet} of
             $F$. If $F^* = F$, the
              $\tau_F : F \rightarrow F$ is necessarily
             an involution on $F$ (i.e. $\tau_F\circ \tau_F = id_F$). Then
             we define $\widehat{F}=\{ F \}$ and call such an $F$ a \textit{self-involutive} facet.
            \n

            \item[(II)] For any $x \in F_1\cap F_2$,
            if $\tau_{F_1}(x) \in F_1^* \cap F_3$, $\tau_{F_2}(x) \in F^*_2 \cap F_4$,
              then $\tau_{F_3}\tau_{F_1}(x) =
                 \tau_{F_4}\tau_{F_2}(x) \in F^*_3 \cap F^*_4$ (see
                 Figure~\ref{p:Local_Commute}). Here
                 we also allow $F_3=F^*_2$ or $F_4 = F^*_1$.\n
            \end{enumerate}

            Then we call $\mathcal{F}=\{ \widehat{F}, \tau_F \}_{F\subset \partial W^n}$
            a \textit{facets-pairing structure} on $W^n$, and call
            $\{ \tau_F : F \rightarrow F^* \}_{F\subset \partial W^n}$
      the \textit{structure maps of $\mathcal{F}$}.

        \begin{figure}
          % Requires \usepackage{graphicx}
         \includegraphics[width=0.22\textwidth]{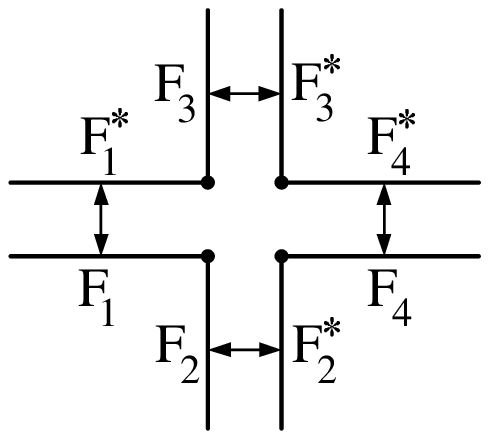}
          \caption{ }\label{p:Local_Commute}
      \end{figure}

  \end{defi}
    \vskip .2cm

       \begin{rem}
           The condition(II) on
           the structure maps $\{ \tau_F : F \rightarrow F^* \}_{F\subset \partial
           W^n}$ in the above definition is a bit special.
            It will exclude many well-known examples of gluing the boundary of
            a manifold with corners from our study. For example, if we pair any facet of
            a dodecahedron with its opposite facet
          by the minimal clockwise twist and glue up the paired facets
          accordingly, we will get the \textit{Poincar\'e homology sphere}.
          But this way of pairing the facets does not meet the condition(II), so it is
           not a facets-pairing structure on the dodecahedron.
       \end{rem}
       \n

        For any proper face $f$ of $W^n$,
        suppose $F_1$ is a facet of $W^n$ with
         $f\subset F_1$, then $\tau_{F_1}(f)$ is also a face of $W^n$
         because $\tau_{F_1}: F_1 \rightarrow F^*_1$ is a face-preserving map.
          Let $F_2$ be a facet so that
         $\tau_{F_1}(f) \subset F_2$. Then we get another face
         $\tau_{F_2}(\tau_{F_1}(f))$ of $W^n$ and so on. In general,
         an expression $\tau_{F_k} \circ \cdots \circ \tau_{F_1}(f)$ is called \textit{valid}
         if $f\subset F_1$ and $\tau_{F_j} \circ \cdots \circ \tau_{F_1}(f) \subset F_{j+1}$ for
         each $1\leq j < k$.
          Moreover, by an abuse of notation, when $k=0$, we define
          $\tau_{F_k} \circ \cdots \circ \tau_{F_1}(f) := f$.\nn

      \begin{defi} [Face Family] \label{Defi:Face-Family}
        Suppose $\mathcal{F}=\{ \widehat{F}, \tau_F \}_{F\subset \partial W^n}$
         is a facets-pairing structure
          on $W^n$.
            For any proper face $f$ of $W^n$,
            let $\widehat{f}$ be the set of all faces
             of the valid form $\tau_{F_k} \circ \cdots \circ \tau_{F_1}(f)$
              for some $k\geq 0$.
             We call $\widehat{f}$ the \textit{face family} containing $f$ in $\mathcal{F}$.
              In particular, the face family containing a facet $F$ is just
               $\widehat{F}$. Obviously, each face
              of $W^n$ is contained in a unique face family of $\mathcal{F}$.
              In addition, for a point $x$ in the relative interior of $f$,  any
              point of the valid form $\tau_{F_k} \circ \cdots \circ \tau_{F_1}(x) \in W^n$ is
              called a \textit{family point} of $x$.
       \end{defi}
        \nn

    \begin{defi}[Seal Space] \label{Defi:Seal-Space}
      For any nice manifold with corners $W^n$ equipped with
      a facets-pairing structure $\mathcal{F}$,
       let $Q^n_{\mathcal{F}}$ denote the quotient space of $W^n$
       with respect to the gluing relation
      $\{ x \sim \tau_F(x) ;  \ \text{for any facet}\ F\subset
        W^n\ \text{and}\ \forall\, x\in F  \}$.  In other words, $Q^n_{\mathcal{F}}$ is obtained by gluing all the
         family points of any point in $W^n$ together. Let
         $\zeta_{\mathcal{F}} : W^n \rightarrow Q^n_{\mathcal{F}}$
         be the corresponding quotient map.
        $Q^n_{\mathcal{F}}$ is called the \textit{seal space of
        $\mathcal{F}$} and $\zeta_{\mathcal{F}}$ is called the \textit{seal map} of $\mathcal{F}$.
     \end{defi}
       \n

    \begin{exam}
      In Figure~\ref{p:three-Panel-Structures},
      the picture 4 is not a facets-pairing structure because it does not meet the condition(II)
      in Definition~\ref{Def:Facets-Pairing-Struc}.
      The other five pictures give us five different facets-pairing structures on
    the square whose seal spaces are all closed manifolds.
   \end{exam}
   \n

   \begin{exam}[Trivial Facets-Pairing Structure]
    \label{Exam:Trivial}
       For a nice manifold with corners $W^n$, if we define $F^*=F$ and
        $\tau_F = id_F$ for each facet $F$ of $W^n$, what we get is
        obviously a facets-pairing structure on $W^n$. We call it
         the \textit{trivial facets-pairing structure} on $W^n$.
      The corresponding seal space is just $W^n$ itself.
    \end{exam}
    \n

   \begin{exam} \label{Exam:Strange}
      We define a facets-pairing structure $\mathcal{F}$ on a $3$-dimensional
      cube $W^3$ centered at the origin $O$ in Figure~\ref{p:Cube_Invol-2} by:
         \begin{itemize}
         \item[(i)]
         $\tau_{F_1} : F_1 \rightarrow F^*_1$
         sends any point $(x_1,x_2,x_3)\in F_1$ to $(-x_1,x_2,x_3) \in F^*_1$; \n

       \item[(ii)]  $\tau_{F_2} : F_2 \rightarrow F^*_2$ sends any
        point $(x_1,x_2,x_3)\in F_2$ to $(x_1,-x_2,-x_3) \in F^*_2$. \n

       \item[(iii)] $\tau_{F_3} : F_3 \rightarrow F_3^*$  sends any point
         $(x_1,x_2,x_3)\in F_3$ to $(-x_1,-x_2,-x_3) \in F^*_3$.
     \end{itemize}

      \begin{figure}
         \includegraphics[width=0.3\textwidth]{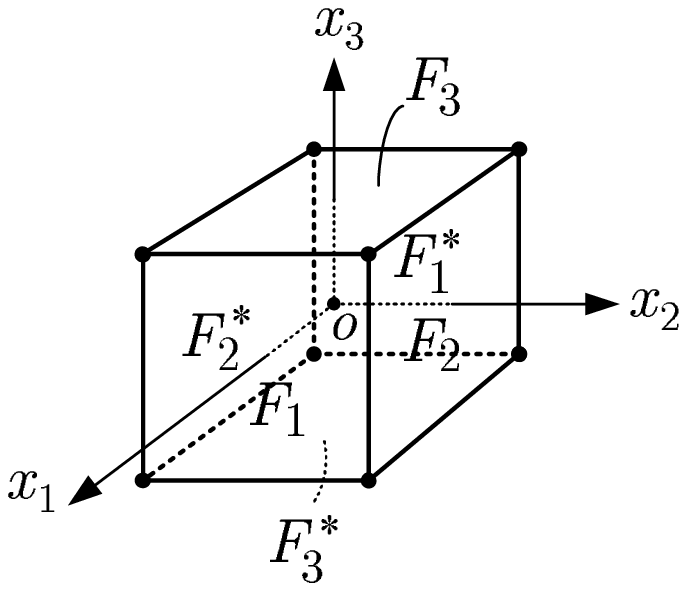}
          \caption{  }\label{p:Cube_Invol-2}
      \end{figure}

      \nn
    There are two $0$-dimensional face families in $\mathcal{F}$, each of which consists
    of four vertices of the cube. And there are four
      $1$-dimensional face families in $\mathcal{F}$, two of them consist of four edges
      each and the other two consist of two edges each.
   In addition, notice that $\tau_{F_2} (F_2\cap F_3) =
   \tau_{F_2} (F_2\cap F_3) = F^*_2 \cap F^*_3$. But
    for a point $p =(x_1,x_2,x_3) \in F_2\cap F_3$,
   $\tau_{F_2}(p) = (x_1, -x_2, -x_3)$ while $\tau_{F_3}(p) = (-x_1,-x_2,-x_3)$.
   So the seal map
     $\zeta_{\mathcal{F}}: W^3 \rightarrow Q^3_{\mathcal{F}}$ will
    glue two different points $\tau_{F_2}(p)$ and $\tau_{F_3}(p)$ in the interior of
         $F^*_2 \cap F^*_3$ into one point in $Q^3_\mathcal{F}$.
  \end{exam}

 To distinguish the facets-pairing structures with the kind of ``bad'' seal map
  as in Example~\ref{Exam:Strange} from other
 facets-pairing structures,  we introduce some extra notions as
 following.\nn

 For any proper face $f$ of $W^n$, let $\Xi(f)$
 denote the set of facets of $W^n$ that contain $f$, i.e.
 $\Xi(f) = \{ F \, | \, \text{$F$ is a facet of $W^n$ with  $f\subset F$}\}$.
 And let $\Xi^{\perp}(f)$ be the set of facets of $W^n$ that intersect $f$
 transversely. Since $W^n$ is a nice manifold with corners,
 if a facet $F$ of $W^n$ intersects $f$ transversely, then $F\cap f$ must be a
 codimension-one face of $f$. So we have:
   $$\Xi^{\perp}(f) = \{ F \, | \, \text{$F$ is a facet of $W^n$ with $f\cap
   F$ being a codimension-one face of $f$}\}.$$

 Choose an arbitrary facet $F\in \Xi(f)$ and let $f'= \tau_F (f)$.
 Then we have a map
  \begin{equation} \label{Equ:Psi-F}
    \Psi^{f}_F : \Xi(f) \rightarrow \Xi(f').
  \end{equation}
  where $ \Psi^{f}_F(F) = F^*$ and for $\forall\, F' \in \Xi(f)$, $ \Psi^{f}_F(F') \cap F^* = \tau_F (F'\cap F)$.
   Similarly, we define a map
    \begin{equation} \label{Equ:Psi-F-Perp}
    (\Psi^{f}_F)^{\perp} : \Xi^{\perp}(f) \rightarrow \Xi^{\perp}(f').
  \end{equation}
  where for any $F' \in \Xi^{\perp}(f)$,
  $(\Psi^{f}_F)^{\perp}(F') \cap f' = \tau_F (F'\cap f) $.
  Since $\tau_F : F \rightarrow F^*$ is a face-preserving
  homeomorphism, so $\Psi^f_F$ and $ (\Psi^{f}_F)^{\perp}$ are both
  bijections.\nn

  For example, for any facet $F$ of $W^n$, $\Xi(F) =\{ F \}$. By definition,
  $\tau_F(F) = F^*$, so $\Psi^F_F : \Xi(F)=\{ F \} \rightarrow \Xi(F^*) = \{ F^* \}$
  is unique.  \nn

  Moreover, for any facet $F'\in \Xi(f')$, let $f''=
  \tau_{F'}(f)$. So we have the composite:
 \[
   \Psi^{f'}_{F'} \circ \Psi^{f}_F : \Xi(f) \rightarrow \Xi(f'')\ \,
   \text{and}\ \,
   (\Psi^{f'}_{F'})^{\perp} \circ (\Psi^{f}_F)^{\perp} : \Xi^{\perp}(f)
   \rightarrow \Xi^{\perp}(f'').
  \]

   \begin{defi}[Strong Facets-Pairing Structure] \label{Def:Strong-FP-Struc}
    Suppose $\mathcal{F}=\{ \widehat{F}, \tau_F \}_{F\subset \partial W^n}$ is
     a facets-pairing structure on a nice manifold with corners $W^n$.
     If for any face $f$ of $W^n$,  whenever $\tau_{F_k} \circ \cdots \circ \tau_{F_1} (f) =
       \tau_{F'_{r}} \circ \cdots \circ \tau_{F'_1} (f) \overset{\vartriangle}{=} \widetilde{f}$,
       we then have:
       \begin{enumerate}
         \item[(a)]  $\tau_{F_k} \circ \cdots \circ \tau_{F_1}(p) =
         \tau_{F'_{r}} \circ \cdots \circ \tau_{F'_1}(p)$ for any point $p \in f$, \n

         \item[(b)] let $f_1= f'_1 :=f $, $f_{i+1} := \tau_{F_i} \circ \cdots \circ \tau_{F_1}
         (f)$ for any $1\leq i \leq k$, and
         $f'_{j+1} :=  \tau_{F'_j} \circ \cdots \circ \tau_{F'_1} (f)$ for any $1\leq j \leq r$.
         So $f_{k+1} = f'_{r+1} = \widetilde{f}$. Then
          \[ \Psi^{f_k}_{F_k} \circ \cdots \circ \Psi^{f_1}_{F_1} =
       \Psi^{f'_r}_{F'_r} \circ \cdots \circ \Psi^{f'_1}_{F'_1} :
             \Xi(f) \rightarrow \Xi(\widetilde{f})  \]
       \end{enumerate}
       we call
      $\mathcal{F}$ a \textit{strong facets-pairing structure} on $W^n$.
    Here if $k=0$, we define $\tau_{F_k} \circ \cdots \circ \tau_{F_1}: f\rightarrow f$
    to be the identity map $id_f$ and $\Psi^{f_k}_{F_k} \circ \cdots \circ \Psi^{f_1}_{F_1}:
     \Xi(f) \rightarrow \Xi(f)$ to be the identity map $id_{\Xi(f)}$.
     Notice that (a) implies
      \[ (\Psi^{f_k}_{F_k})^{\perp} \circ \cdots \circ (\Psi^{f_1}_{F_1})^{\perp} =
         (\Psi^{f'_r}_{F'_r})^{\perp} \circ \cdots \circ (\Psi^{f'_1}_{F'_1})^{\perp} :
             \Xi^{\perp}(f) \rightarrow \Xi^{\perp}(\widetilde{f}).  \]
   \end{defi}
 \nn

   \begin{rem} \label{Rem:Strong-Self-Invol-Facets}
       In a strong facets-pairing structure
       $\mathcal{F}=\{ \widehat{F}, \tau_F \}_{F\subset \partial W^n}$,
      if $F$ is a self-involutive facet, i.e. $\tau_F (F) = F$, then
      $\tau_F$ must be $id_F$. \nn
  \end{rem}

  By the above definition, the trivial facets-pairing structure is strong.
  But the facets-pairing structure defined in Example~\ref{Exam:Strange} is not strong because
   $\tau_{F_2} (F_2\cap F_3) = \tau_{F_3} (F_2\cap F_3)$, but
    $\tau_{F_2}$ does not agree with  $\tau_{F_3}$
   on $F_2\cap F_3$. \nn

   The following lemma is an immediate consequence of Definition~\ref{Def:Strong-FP-Struc}.
   \nn

   \begin{lem}
    If $\mathcal{F}$ is a strong facets-pairing structure on $W^n$,
    then for any face $f$ of $W^n$, the seal map
   $\zeta_{\mathcal{F}} : W^n \rightarrow Q^n_{\mathcal{F}}$
    will map the interior of $f$ injectively into
    the seal space $Q^n_{\mathcal{F}}$.
    \end{lem}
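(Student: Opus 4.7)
The plan is to take two points $x,y$ in the relative interior $\mathrm{int}(f)$ with $\zeta_{\mathcal{F}}(x)=\zeta_{\mathcal{F}}(y)$ and show $x=y$, by reducing the identification to a single valid composition of structure maps and then killing that composition via the strongness hypothesis.

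First I would unwind the gluing relation defining $Q^n_{\mathcal{F}}$. Because $\zeta_{\mathcal{F}}(x)=\zeta_{\mathcal{F}}(y)$, there is a finite chain $x=p_0,p_1,\ldots,p_m=y$ with $p_i\in F_{i+1}$ and $p_{i+1}=\tau_{F_{i+1}}(p_i)$ for each $i$ (using that $\tau_{F^*}=\tau_F^{-1}$, every edge of the chain is of this form). I want to verify that this chain is in fact a \emph{valid} composition in the sense of the paper applied to the face $f$, that is $f\subset F_1$ and $\tau_{F_j}\circ\cdots\circ\tau_{F_1}(f)\subset F_{j+1}$. This follows inductively from the hypothesis that $x\in\mathrm{int}(f)$: each $\tau_{F_i}$ is a face-preserving homeomorphism, so it sends the relative interior of a face onto the relative interior of another face, hence $p_j$ lies in the relative interior of the face $f_{j+1}:=\tau_{F_j}\circ\cdots\circ\tau_{F_1}(f)$. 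Since $p_j\in F_{j+1}$ and the facet $F_{j+1}$ already meets $\mathrm{int}(f_{j+1})$, the inclusion $f_{j+1}\subset F_{j+1}$ holds (a facet that contains a relatively interior point of a face contains the whole face, as $f_{j+1}\cap F_{j+1}$ is itself a face of $f_{j+1}$).

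Now observe that $y=\tau_{F_k}\circ\cdots\circ\tau_{F_1}(x)$ lies in the relative interior of $\tau_{F_k}\circ\cdots\circ\tau_{F_1}(f)$ as well as in the relative interior of $f$. Since each point of a nice manifold with corners lies in the relative interior of a unique face, this forces
\[
\tau_{F_k}\circ\cdots\circ\tau_{F_1}(f)=f.
\]
At this point I would apply the defining property (a) of a strong facets-pairing structure, comparing the chain $\tau_{F_k}\circ\cdots\circ\tau_{F_1}$ with the empty chain $\mathrm{id}_f$ (which, by the convention stated in Definition~\ref{Def:Strong-FP-Struc}, corresponds to $k=0$ and also maps $f$ to $f$). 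Strongness then yields $\tau_{F_k}\circ\cdots\circ\tau_{F_1}(p)=p$ for every $p\in f$, and specializing to $p=x$ gives $y=x$, proving injectivity on $\mathrm{int}(f)$.

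The only step requiring care is the verification that the chain we extract from the equivalence relation is a valid composition when restricted to $f$; the rest is a clean application of strongness to an empty comparison chain. I do not expect a serious obstacle, but the bookkeeping with relative interiors and the uniqueness of the open face containing a point is the essential point that makes the argument work.
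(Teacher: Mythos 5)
Your proposal is correct and follows the same route as the paper's own proof: extract a valid composition $\tau_{F_k}\circ\cdots\circ\tau_{F_1}$ identifying the two points, observe that it must send $f$ to $f$, and then invoke strongness against the empty chain $id_f$ to force the composition to be the identity on $f$. The extra bookkeeping you supply (relative interiors mapping to relative interiors, and the uniqueness of the face containing a point in its relative interior) is exactly the detail the paper leaves implicit in the step ``this implies $\tau_{F_k}\circ\cdots\circ\tau_{F_1}(f)=f$.''
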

  \begin{proof}
     If two different points $p$ and $p'$ in
    the interior of $f$ are identified under $\zeta_{\mathcal{F}}$, we must have
   $p'= \tau_{F_k} \circ \cdots \circ \tau_{F_1} (p)$ for some
    sequence of facets $F_1,\cdots, F_k$. This implies that
    $\tau_{F_k} \circ \cdots \circ \tau_{F_1}(f)=f$. Since
     $id_f$ also sends $f$ to $f$,
    so by the strongness of $\mathcal{F}$,
     $\tau_{F_k} \circ \cdots \circ \tau_{F_1} |_f = id_f$.
    In particular, $\tau_{F_k} \circ \cdots \circ \tau_{F_1}(p)=p$.
     But this contradicts $p'\neq p$.
    \end{proof}
       \nn

      Notice that the definition of facets-pairing structure does not
      tell us how many different faces there are in a face
       family. In fact, in Example~\ref{Exam:Strange} we see that
     two face families of the same dimension might have different number of faces.

     \vskip .2cm

         \begin{defi}[Perfect Facets-Pairing Structure] \label{Def:Perfect-FP-Struc}
          Suppose $\mathcal{F}$ is
         a facets-pairing structure on a nice manifold with corners
         $W^n$. For a codimension-$l$ face $f$ of $W^n$, if the face family $\widehat{f}$
              consists of exactly $2^l$ different faces,
             we call $\widehat{f}$ a \textit{perfect face family} in $\mathcal{F}$.
              A facets-pairing structure $\mathcal{F}$ is called \textit{perfect}
              if all its face families in all dimensions are perfect. Note that
              a perfect facets-pairing structure has no self-involutive
              facets.\nn

         A strong facets-pairing structure may not
          be perfect (e.g. the facets-pairing structure on the square defined by
         picture 3 and 6 in Figure~\ref{p:three-Panel-Structures}).
         So we have the following strict hierarchy of notions.
     \end{defi}
   \nn

  facets-pairing structures $\, \supsetneq\,$ strong facets-pairing structures
  $\,\supsetneq\, $ strong and perfect facets-pairing structures. \nn
   \n

   \begin{rem}
    In a perfect facets-pairing structure $\mathcal{F}$ on $W^n$,
    it is possible that two faces in the same face family have nonempty
    intersection. For example, the picture 5 in Figure~\ref{p:three-Panel-Structures} defines a
     perfect facets-pairing structure $\mathcal{F}$ on a square where any
     facet of the square is adjacent to its
     twin facet.\\
   \end{rem}

  \section{Facets-pairing structures on a cube} \label{Section3}

   Although the notion of facets-pairing structure makes sense for
   an arbitrary nice manifold with corners $W^n$, if $W^n$ has no
  symmetry, such a structure might just be trivial (see Example~\ref{Exam:Trivial}).
   So in the rest of this paper, we will restrict our attention to solid cubes
    on which we can construct a lot of different facets-pairing structures.
  Our main aim in this paper is to understand those facets-pairing structures on cubes
  whose seal spaces are homeomorphic to closed manifolds.
   To make our discussion convenient, we introduce the following
   notations.\nn

  Let $[\pm n ] := \{ \pm 1 , \cdots, \pm n \}
       = \{ 1, -1, 2, -2, \cdots, n, -n \}$.
      A map $\sigma: [\pm n] \rightarrow [\pm n]$ is called a
      \textit{signed permutation} on $[\pm n ]$ if $\sigma$ is a bijection and $\sigma(-k) = - \sigma
      (k)$ for any $k\in [\pm n]$. The set of all signed permutations on $[\pm n]$ with respect to
      the composition of maps forms a group, denoted by $\mathfrak{S}^{\pm}_n$.  \vskip .2cm

    Let $\mathcal{C}^n$ denote the following $n$-dimensional cube
   in the Euclidean space $\R^n$.
   \[  \mathcal{C}^n := \{ (x_1,\cdots, x_n) \in \R^n \, | \,  -\frac{1}{4} \leq x_i \leq \frac{1}{4},
     \, 1\leq \forall\, i \leq n  \} \]
    For any $1\leq i \leq n$,
  let $\mathbf{F}(i)$ and $\mathbf{F}(-i)$ be the facets
    of $\mathcal{C}^n$ which lie in the hyperplane $\{ x_i = \frac{1}{4}
    \}$ and $\{ x_i = -\frac{1}{4} \}$ respectively.  Moreover, for any
    $j_1, \cdots, j_s \in [\pm n]$ whose absolute values $|j_1|,\cdots, |j_s|$ are pairwise distinct,
     we define
    \[   \quad \mathbf{F}(j_1,\cdots ,j_s) := \mathbf{F}(j_1) \cap \cdots \cap  \mathbf{F}(j_s)
                \subset \mathcal{C}^n. \]
       Then $\mathbf{F}(j_1,\cdots ,j_s)$ is a
        face of $\mathcal{C}^n$ with codimension $s$. Conversely,
       for any proper codimension-$s$ face $f$ of
       $\mathcal{C}^n$, there exists  $j_1, \cdots, j_s \in [\pm n]$ so that
       $\mathbf{F}(j_1,\cdots ,j_s)$ equals $f$.  We call such an $\mathbf{F}(j_1,\cdots ,j_s)$
       a \textit{normal form} of $f$. If two normal forms
       $\mathbf{F}(j_1,\cdots ,j_s)$ and  $\mathbf{F}(j'_1,\cdots ,j'_s)$
       denote the same face, we write $\mathbf{F}(j_1,\cdots ,j_s) =
       \mathbf{F}(j'_1,\cdots ,j'_s)$. Obviously, we have: \n
       \begin{itemize}
         \item  $\mathbf{F}(j_1,\cdots ,j_s) = \mathbf{F}(j'_1,\cdots ,j'_s)$
      if and only if $\{ j_1,\cdots ,j_s \} = \{ j'_1,\cdots ,j'_s \}$.\n

         \item $\Xi(\mathbf{F}(j_1,\cdots ,j_s)) = \{ \mathbf{F}(j_1),\cdots,
         \mathbf{F}(j_s)\}$, and \n

         \item $\Xi^{\perp}(\mathbf{F}(j_1,\cdots ,j_s)) =
         \{ \mathbf{F}(k) \, ;\, k \in [\pm n] \backslash
          \{ \pm j_1,\cdots, \pm j_s \} \}$.
       \end{itemize}

    \nn

  \textbf{Fact:} The symmetry group of $\mathcal{C}^n$
         is isomorphic to the signed permutation group $\mathfrak{S}^{\pm}_n$. This is because
         each symmetry of $\mathcal{C}^n$ is uniquely determined by
         its permutation on the set of $2n$ facets $\{ \mathbf{F}(j) \}_{j\in [\pm n]}$ of
         $\mathcal{C}^n$. \nn

   Suppose $\mathcal{F}$ is a facets-pairing structure
   on $\mathcal{C}^n$.
   For any facet $\mathbf{F}(j)$ of $\mathcal{C}^n$, let the twin facet of $\mathbf{F}(j)$
   in $\mathcal{F}$ be $\mathbf{F}(\omega(j))$ where $\omega(j)\in [\pm n]$.
    Then the facet-pair
   $\widehat{\mathbf{F}}(j) = \mathbf{F}(j) \cup \mathbf{F}(\omega(j))$.
   Since $\omega(\omega(j)) = j$, so
  $\omega$ is an involutive permutation on $[\pm n]$.
   Note that $\omega(j)=j \Longleftrightarrow \mathbf{F}(j)$ is
   self-involutive.\nn

 The structure maps of $\mathcal{F}$ are
  a collection of face-preserving homeomorphisms
    $$\{ \tau_j \overset{\vartriangle}{=} \tau_{\mathbf{F}(j)}
    : \mathbf{F}(j) \rightarrow \mathbf{F}(\omega(j))\}_{j\in [\pm n]}$$
    which
    satisfy the conditions in
    Definition~\ref{Def:Facets-Pairing-Struc}.
    Therefore, $\mathcal{F}$ can be formally represented
     by $\{ \omega, \tau_j \}_{j\in [\pm n]}$. \nn

    Moreover, by the following lemma we can assume that
    each $\tau_j: \mathbf{F}(j) \rightarrow \mathbf{F}(\omega(j))$
     is an isometry with respect to the natural Euclidean metric on
     $\mathbf{F}(j)$ and $\mathbf{F}(\omega(j))$ up to face-preserving isotopy. \vskip .2cm

   \begin{lem} \label{Lem:Isotopy}
     Any face-preserving homeomorphism $\tau: \mathcal{C}^{n} \rightarrow \mathcal{C}^{n}$
     is isotopic to an element of symmetry of $\mathcal{C}^{n}$ via face-preserving isotopy.
     \end{lem}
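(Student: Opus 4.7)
The plan is to reduce $\tau$ to the identity in three successive steps, each accomplished by an explicit face-preserving isotopy. \textbf{Step 1 (straighten the permutation of facets).} Because $\tau$ is a face-preserving homeomorphism of $\mathcal{C}^n$, it permutes the $2n$ facets $\{\mathbf{F}(j)\}_{j\in[\pm n]}$; since $\mathbf{F}(j)$ and $\mathbf{F}(-j)$ are precisely the pairs of disjoint facets of $\mathcal{C}^n$, this permutation preserves the involution $j\mapsto -j$ and is therefore a signed permutation $\sigma\in\mathfrak{S}^{\pm}_n$. By the Fact stated just before the lemma, $\sigma$ is realised by a unique isometric symmetry $g_\sigma$ of $\mathcal{C}^n$. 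Replacing $\tau$ by $g_\sigma^{-1}\circ\tau$, I may assume from now on that $\tau(\mathbf{F}(j))=\mathbf{F}(j)$ for every $j\in[\pm n]$; it then suffices to produce a face-preserving isotopy from this modified $\tau$ to $\mathrm{id}_{\mathcal{C}^n}$.

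\textbf{Step 2 (become the identity on the boundary).} I induct on $n$. The case $n=1$ is immediate: a self-homeomorphism of an interval fixing both endpoints is isotopic rel $\partial$ to the identity. For $n>1$, each restriction $\tau|_{\mathbf{F}(j)}$ is a face-preserving self-homeomorphism of the $(n-1)$-cube $\mathbf{F}(j)$ which fixes every one of its facets $\mathbf{F}(j)\cap\mathbf{F}(k)$ setwise, so by the inductive hypothesis there is a face-preserving isotopy from $\tau|_{\mathbf{F}(j)}$ to the identity. To assemble these isotopies coherently I proceed skeleton by skeleton: straighten $\tau$ first on the $0$-skeleton, then on the $1$-skeleton, and so on up to the $(n-1)$-skeleton $\partial\mathcal{C}^n$. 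At stage $\ell$, the inductive hypothesis supplies, on each $\ell$-face $f$, a face-preserving isotopy fixing $\partial f$; collar neighbourhoods of the $\ell$-skeleton inside $\mathcal{C}^n$ (which exist because $\mathcal{C}^n$ is a nice manifold with corners) allow me to extend each such local isotopy to a face-preserving isotopy on all of $\mathcal{C}^n$, damped to the identity outside the collar.

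\textbf{Step 3 (Alexander trick).} With $\tau$ equal to the identity on $\partial\mathcal{C}^n$, the standard Alexander trick coning from the centre $0\in\mathcal{C}^n$ provides an isotopy to $\mathrm{id}_{\mathcal{C}^n}$ through homeomorphisms that already equal the identity on $\partial\mathcal{C}^n$, hence send every face to itself. Concatenating the three stages then yields a face-preserving isotopy from the original $\tau$ to $g_\sigma\in\mathfrak{S}^{\pm}_n$, which is what the lemma asserts.

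\textbf{The main obstacle} will be the compatibility issue in Step 2: applying the inductive hypothesis one facet at a time produces isotopies that need not agree on the intersections $\mathbf{F}(i)\cap\mathbf{F}(j)$. Working through the skeletal filtration, rather than facet by facet, is what circumvents this, because once $\tau$ has been isotoped to the identity on the $\ell$-skeleton each subsequent isotopy on an $(\ell{+}1)$-face fixes that face's boundary and therefore cannot interfere with the isotopies on neighbouring $(\ell{+}1)$-faces. Careful bookkeeping of the collars --- routine thanks to the niceness of $\mathcal{C}^n$ as a manifold with corners --- is what lets these local isotopies combine into a single face-preserving isotopy of the whole cube.
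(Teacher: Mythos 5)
Your proof is correct and follows essentially the same route as the paper's: identify the signed permutation $\sigma$ induced on the facets, realise it by the unique symmetry of $\mathcal{C}^n$, straighten $\tau$ on the boundary skeleton by skeleton using the inductive hypothesis, and finish with the Alexander trick. The only (immaterial) difference is that you pre-compose with $g_\sigma^{-1}$ before the boundary isotopy, whereas the paper isotopes the faces to isometries first and then observes the result coincides with the symmetry $h_\tau$ on $\partial\mathcal{C}^n$.
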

     \begin{proof}
        We proceed our proof by induction on the dimension $n$.
        When $n=0$, claim is obviously true. Now assume the statement is true in dimension
        less than $n$. Then given a face-preserving homeomorphism
        $\tau: \mathcal{C}^{n} \rightarrow \mathcal{C}^{n}$,
         suppose $\tau$ maps the facet $\mathbf{F}(j)$ to $\mathbf{F}(\sigma(j))$
         where $j, \sigma(j) \in  [\pm n]$. Since $\tau$ maps
         disjoint faces to disjoint faces, so $\sigma (-j) = -
         \sigma(j)$, i.e. $\sigma$ is an element of
         $\mathfrak{S}^{\pm}_n$.
        In addition, for any face $\mathbf{F}(j_1,\cdots ,j_s)$ of $\mathcal{C}^n$,
        $\tau(\mathbf{F}(j_1,\cdots ,j_s))= \mathbf{F}(\sigma(j_1),\cdots ,
        \sigma(j_s))$. So
        the permutation on the face lattice of $\mathcal{C}^n$ induced by $\tau$ is completely
        determined by $\sigma$. \n

       Since the symmetry group of $\mathcal{C}^n$
         is isomorphic to $\mathfrak{S}^{\pm}_n$, so there exists a unique
         symmetry $h_{\tau}$ of $\mathcal{C}^n$ so that $h_{\tau}$
         maps $\mathbf{F}(j)$ to $\mathbf{F}(\sigma(j))$. Then $h_{\tau}$
         induces the same permutation on the face lattice of $\mathcal{C}^n$ as $\tau$.
         \n

        Next, by the induction hypothesis, we can isotope $\tau$ on the boundary of $\mathcal{C}^{n}$ from
        $0$-dimensional faces to $(n-1)$-dimensional faces by face-preserving isotopy
         so that after the
        isotopy, we get a new face-preserving homeomorphism
        $\tau': \mathcal{C}^{n} \rightarrow \mathcal{C}^{n}$ whose
       restriction to any proper face of $\mathcal{C}^n$ is an isometry.
        Moreover, $\tau'$ induces the same permutation on the
         face lattice of $\mathcal{C}^n$ as $\tau$. So $\tau'$ must
         agree with $h_{\tau}$ on the boundary of $\mathcal{C}^n$ since they induce
         the same permutation on the face lattices of any proper face of
          $\mathcal{C}^n$. This means that
          $h^{-1}_{\tau}\circ \tau'$ fixes the boundary of $\mathcal{C}^n$.
          \n

       By the Alexander's lemma below, we can isotope $h^{-1}_{\tau}\circ\tau'$ to the identity
        map of the whole $\mathcal{C}^{n}$ by an isotopy fixing $\partial \mathcal{C}^{n}$.
        So $\tau'$ is isotopic to $h_{\tau}$, so is $\tau$. Moreover, since all the
        isotopies of $\mathcal{C}^n$ we have used are face-preserving, so
       the isotopy from $\tau$ to $h_{\tau}$ is also face-preserving.
     \end{proof}
   \nn

    \begin{lem}[Alexander]
       If $h$ is a homeomorphism of an $n$-dimensional disk $D^n$ onto
       itself that fixes all points of $\partial D^n$, then there is an
       isotopy $H_t$ ($0\leq t \leq 1$) of $D^n$ onto itself such
       that $H_0=h$, $H_1=id_{D^n}$ and each $H_t$ fixes all points in $\partial D^n$.
    \end{lem}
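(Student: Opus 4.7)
The plan is to use the classical \emph{Alexander trick}: coning off the homeomorphism $h$ from the boundary toward the center of $D^n$. After identifying $D^n$ with the closed unit ball in $\R^n$ (which we may do up to a face-preserving homeomorphism, though here we only need the topological disk structure), define
\[
 H_t(x) = \begin{cases} (1-t)\, h\!\left(\dfrac{x}{1-t}\right) & \text{if } |x| \leq 1-t,\\[4pt] x & \text{if } 1-t \leq |x| \leq 1,\end{cases}
\]
for $0 \leq t < 1$, and set $H_1 = id_{D^n}$. This is a kind of ``shrink $h$ into a smaller concentric ball and fill in by the identity outside.''

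First I would check that each $H_t$ is well-defined and continuous on $D^n$. The two formulas agree on the sphere $|x|=1-t$ precisely because $h$ fixes $\partial D^n$: if $|x|=1-t$, then $x/(1-t)$ lies on $\partial D^n$, so $h(x/(1-t)) = x/(1-t)$ and $(1-t) h(x/(1-t)) = x$. Next I would verify that $H_t$ is a homeomorphism of $D^n$: on the annulus $|x| \geq 1-t$ it is the identity, and on the ball $|x| \leq 1-t$ it is the composition of a rescaling with $h$, hence a homeomorphism onto the same ball. Clearly $H_t$ restricts to the identity on $\partial D^n$ for every $t$. By construction $H_0 = h$.

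The remaining point is to show that $(t,x) \mapsto H_t(x)$ is jointly continuous on $[0,1] \times D^n$, which is the only slightly delicate step. On $[0,1) \times D^n$ joint continuity is immediate from the formula, so the only issue is continuity at $t=1$. Here one notes that for any $t$, the maximum displacement satisfies $|H_t(x) - x| \leq (1-t)\cdot\operatorname{diam}(D^n)$, so $H_t \to id_{D^n}$ uniformly as $t \to 1$, which gives joint continuity at $t=1$. Thus $\{H_t\}_{0\leq t \leq 1}$ is the desired isotopy.

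The only real obstacle is the joint continuity at $t=1$; everything else is a direct unwinding of the definitions. Once Alexander's lemma is established in this form, the preceding proof of Lemma~\ref{Lem:Isotopy} is complete, since the face-preserving isotopies constructed inductively on the boundary were already shown to reduce the problem to a homeomorphism $h^{-1}_{\tau}\circ \tau'$ of $\mathcal{C}^n$ fixing $\partial \mathcal{C}^n$, which by Alexander's lemma is isotopic rel boundary to the identity.
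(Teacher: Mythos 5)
Your proof is correct: it is the classical Alexander trick (coning $h$ off toward the center), and the verification of well-definedness, the homeomorphism property on each piece, and the joint continuity at $t=1$ via the uniform bound $|H_t(x)-x|\leq (1-t)\cdot\operatorname{diam}(D^n)$ are all sound. The paper states this lemma without proof, citing it as a known classical fact, so there is no in-paper argument to compare against; your write-up supplies exactly the standard argument one would expect.
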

    \nn

   \begin{defi}[Regular Facets-Pairing Structure on a Cube]
   \label{Def:Regular-FP-Struc}
     A facets-pairing structure $\mathcal{F}=\{ \omega, \tau_j \}_{j\in [\pm n]}$
     on $\mathcal{C}^n$ is called \textit{regular} if\n
    \begin{enumerate}
      \item  each $\tau_j: \mathbf{F}(j) \rightarrow \mathbf{F}(\omega(j))$ is an
        isometry with respect to the Euclidean metric on $\mathbf{F}(j)$ and
        $\mathbf{F}(\omega(j))$;\n
      \item $\omega: [\pm n] \rightarrow [\pm n]$ is a signed permutation with
        $\omega\circ \omega = id_{[\pm n]}$.\n
    \end{enumerate}
    The second condition in the above definition simply means that if $\mathbf{F}(j)$ is paired with
    $\mathbf{F}(\omega(j))$, then $\mathbf{F}(-j)$ is paired with $\mathbf{F}(-\omega(j))$.
   \end{defi}
      \nn

   In the rest of this section,
   we always assume that $\mathcal{F}=\{ \omega, \tau_j \}_{j\in [\pm n]}$
    is a regular facets-pairing structure on $\mathcal{C}^n$. By our
    assumption, each $\tau_j$ determines a map
    $\sigma_j :  [\pm n] \backslash \{ \pm j \} \rightarrow
      [\pm n] \backslash \{ \pm \omega(j) \} $ where
      \begin{equation} \label{Equ:sigma_j}
       \tau_j(\mathbf{F}(j,k)) = \mathbf{F}(\omega(j), \sigma_j(k)), \
    \forall\, k\in [\pm n] \backslash \{ \pm j\}
       \end{equation}
       Obviously, $\sigma_j$ is a bijection and $\sigma_j(-k) = -\sigma_j(k)$ for all $k$.
    Moreover, each $\sigma_j$ canonically determines a permutation
    $\widetilde{\sigma}_j : [\pm n] \rightarrow [\pm n]$ by:
      \begin{equation} \label{Equ:Sigma-Tilde}
            \widetilde{\sigma}_j (k)  := \left\{
                                       \begin{array}{ll}
                                         \sigma_j(k), & \hbox{ $ k \neq \pm j$;} \\
                                              \omega(k) , & \hbox{$\ k= \pm j$.}
                                            \end{array}
                                          \right.
       \end{equation}
   Since $\omega$ is a signed permutation on $[\pm n]$, so
   is $\widetilde{\sigma}_j $. And by the definition,
     \begin{equation} \label{Equ:omega-sigma}
        \widetilde{\sigma}_j(\pm j) = \pm \omega(j),\ \,
        \forall\, j \in [\pm n].
     \end{equation}
   By the fact above, $\widetilde{\sigma}_j $
     determines a unique symmetry of the cube $\mathcal{C}^n$,
    denoted by $\widetilde{\tau}_j : \mathcal{C}^n \rightarrow \mathcal{C}^n$ where
    $\widetilde{\tau}_j ( \mathbf{F}(k) ) =\mathbf{F}(\widetilde{\sigma}_j(k))$ for any
     $k\in [\pm n]$.
    Then~\eqref{Equ:sigma_j} becomes:
      $$\widetilde{\tau}_j(\mathbf{F}(j,k))
      =\mathbf{F}(\widetilde{\sigma}_j(j),\widetilde{\sigma}_j(k))).$$
      Obviously, $\tau_j =\widetilde{\tau}_j|_{\mathbf{F}(j)}$.
      So $\tau_j$ and $\widetilde{\tau}_j$ determine each other.
      By an abuse of terms, we also call
       $\{ \widetilde{\tau}_j : \mathcal{C}^n \rightarrow \mathcal{C}^n \}_{j\in [\pm n]}$
       the \textit{structure maps} of $\mathcal{F}$.
    \vskip .2cm

    \begin{defi} \label{Def:FPS-Equiv}
     Two facets-pairing structures
      $\{ \omega, \tau_j \}_{j\in [\pm n]}$ and
        $\{ \omega' , \tau'_j \}_{j\in [\pm n]}$
        on $\mathcal{C}^n$ are called
    \textit{equivalent} if there exists an symmetry
    $h: \mathcal{C}^n \rightarrow \mathcal{C}^n$ such that
    $\tau_j  = h^{-1} \circ \tau'_{j'} \circ h$ where
    $\mathbf{F}(j')= h(\mathbf{F}(j))$ for each $j\in [\pm n]$.\nn

     Obviously, if two facets-pairing structure $\mathcal{F}$ and $\mathcal{F}'$
      on $\mathcal{C}^n$ are equivalent, one of $\mathcal{F}$ and $\mathcal{F}'$
       is perfect (or strong) will imply the other is also perfect (or
       strong).\nn
    \end{defi}

   \noindent \textbf{Problem(A):} how to classify all the regular facets-pairing
    structures on the $n$-dimensional cube $\mathcal{C}^n$ up to equivalence? \vskip .2cm

    By the above discussion, the information of
    a regular facets-pairing structure $\mathcal{F}$ on $\mathcal{C}^n$
    is completely encoded in a set of maps $\{ \omega, \sigma_j \}_{j\in [\pm n]}$ or
    $\{ \omega, \widetilde{\sigma}_j \}_{j\in [\pm n]}$. So to investigate this problem,
    we first interpret the condition (I) and (II)
    in the definition of facets-pairing structure into some
    conditions on $\{ \omega,  \sigma_j \}_{j\in [\pm n]}$ or
    $\{ \omega, \widetilde{\sigma}_j \}_{j\in [\pm n]}$.
    In fact,  the condition (I) in Definition~\ref{Def:Facets-Pairing-Struc}
    is equivalent to:
  \begin{equation} \label{Equ:Cube-Commute-0}
    \tau_{\omega(j)} \circ \tau_j = id_{\mathbf{F}(j)}\ \text{for}\ \forall\, j \in [\pm n].
   \end{equation}
  The condition(II) in Definition~\ref{Def:Facets-Pairing-Struc} is equivalent to:
   \begin{equation} \label{Equ:Cube-Commute-1}
    \tau_{\sigma_j(k)}\left( \tau_j( p )\right) =
             \tau_{\sigma_k(j)} \left( \tau_k(p) \right),
     \ \forall\, p \in \mathbf{F}(j,k) \ \text{and}\ |j|\neq |k|.
   \end{equation}
    This implies that $\tau_{\sigma_j(k)}\left( \tau_j( \mathbf{F}(j,k) )\right) =
             \tau_{\sigma_k(j)} \left( \tau_k(\mathbf{F}(j,k)) \right)$.
             So we have:
     \begin{equation*}
        \tau_{\sigma_j(k)}\left(  \mathbf{F}(\omega(j),\sigma_j(k)) \right) =
             \tau_{\sigma_k(j)} \left( \mathbf{F}(\sigma_k(j),\omega(k)) \right)
      \end{equation*}
        \begin{equation*}
   \Longrightarrow\  \mathbf{F}(\sigma_{\sigma_j(k)} (\omega(j)), \omega(\sigma_j(k)))
              = \mathbf{F}(\omega(\sigma_k(j)), \sigma_{\sigma_k(j)}(\omega(k)))
      \end{equation*}
Notice that $\sigma_j(k)\neq \sigma_k(j)$. Otherwise,
   $\sigma_{\sigma_k(j)} (\omega(j)) = \sigma_{\sigma_j(k)} (\omega(j)) =
      \sigma_{\sigma_k(j)}(\omega(k)) $ then $\omega(j) = \omega(k)$, hence
      $j=k$ which contradicts $ |j|\neq |k|$. So we must have
      \begin{equation} \label{Equ:Cube-Commute-2}
         \sigma_{\sigma_j(k)} (\omega(j)) = \omega(\sigma_k(j));\quad
       \sigma_{\sigma_k(j)}(\omega(k)) =\omega(\sigma_j(k))\ \ \text{for}\ \forall\, |j|\neq |k|.
       \end{equation}
  \nn

     In addition,  since $\tau_{\sigma_j(k)} \circ \tau_j$ and
           $ \tau_{\sigma_k(j)} \circ \tau_k$ are both
isometries, the Equation~\eqref{Equ:Cube-Commute-1} is equivalent to
 saying that $\tau_{\sigma_j(k)} \circ \tau_j$ and
           $ \tau_{\sigma_k(j)} \circ \tau_k$ induce the same
           map from the face lattice of $\mathbf{F}(j, k)$ to the face lattice of the image face, i.e.
          for any face $ \mathbf{F}(j,k,l)\subset \mathbf{F}(j, k)$,
          we should have:
        \begin{equation}
          \tau_{\sigma_j(k)} \circ \tau_j (\mathbf{F}(j,k, l))  =
            \tau_{\sigma_k(j)} \circ \tau_k (\mathbf{F}(j,k, l))
          \end{equation}
      \begin{equation*}
       \Longrightarrow\   \tau_{\sigma_j(k)} (\mathbf{F}(\omega(j),\sigma_j(k), \sigma_j(l)))  =
            \tau_{\sigma_k(j)} (\mathbf{F}(\sigma_k(j),\omega(k), \sigma_k(l)))
          \end{equation*}
\begin{equation*}
     \mathbf{F}(\sigma_{\sigma_j(k)} (\omega(j)), \omega(\sigma_j(k)), \sigma_{\sigma_j(k)}(\sigma_j(l)))
              = \mathbf{F}( \omega(\sigma_k(j)), \sigma_{\sigma_k(j)}(\omega(k)), \sigma_{\sigma_k(j)}(\sigma_k(l)))
      \end{equation*}
       \begin{equation} \label{Equ:Cube-Commute-3}
          \overset{\eqref{Equ:Cube-Commute-2}}{\Longrightarrow}\ \
             \sigma_{\sigma_j(k)}(\sigma_j(l)) =
             \sigma_{\sigma_k(j)}(\sigma_k(l)),\ \forall\, |l| \neq  |k| \neq |j|.
       \end{equation}
     \nn

    \noindent So the condition(II) on $\mathcal{F}$ is equivalent to the
   condition~\eqref{Equ:Cube-Commute-2} and~\eqref{Equ:Cube-Commute-3}
   on $\{ \omega, \sigma_j \}_{j \in [\pm n]}$.
  Moreover, we can interpret the~\eqref{Equ:Cube-Commute-0},~\eqref{Equ:Cube-Commute-2} and~\eqref{Equ:Cube-Commute-3}
  into conditions on $\{ \widetilde{\sigma}_j \}_{j \in [\pm n]}$ as:
  \begin{equation} \label{Equ:Cube-Commute-6}
  \widetilde{\sigma}_{\omega(j)} \circ
  \widetilde{\sigma}_{j} = id_{[\pm n]},\ \ \forall\, j \in [\pm n].
   \end{equation}
      \begin{equation} \label{Equ:Cube-Commute-4}
               \widetilde{\sigma}_{\widetilde{\sigma}_j(k)} (\omega(j))=
   \omega(\widetilde{\sigma}_k(j)),\ \forall\, j,k\in [\pm n]
       \end{equation}
     \begin{equation} \label{Equ:Cube-Commute-5}
             \widetilde{\sigma}_{\widetilde{\sigma}_j(k)}(\widetilde{\sigma}_j(l)) =
             \widetilde{\sigma}_{\widetilde{\sigma}_k(j)}(\widetilde{\sigma}_k(l)),\
  \forall\, j,k,l \in [\pm n]
       \end{equation}
  \n

   Notice that if we set $l=j$ in~\eqref{Equ:Cube-Commute-5}, we get~\eqref{Equ:Cube-Commute-4}
  from~\eqref{Equ:Cube-Commute-6}.
  So~\eqref{Equ:Cube-Commute-4} is essentially contained in
  ~\eqref{Equ:Cube-Commute-6} and~\eqref{Equ:Cube-Commute-5}.

  \nn

   By the above analysis, we can build a one-to-one correspondence between
     the set of all regular facets-pairing
 structures on $\mathcal{C}^n$ and the set of all tuples of
   elements $(\omega; T_1, T_{-1},\cdots, T_n, T_{-n})$ in $\mathfrak{S}^{\pm}_n$ which
   satisfy:\n
    \begin{enumerate}
       \item[(a)] $\omega\circ \omega = id_{[\pm n]}$ for $\forall\, j \in [\pm n]$.\n
      \item[(b)]  $T_j(j)= \omega(j)$ and $T_{\omega(j)}\circ T_j = id_{[\pm n]}$,\
        $\forall\, j\in [\pm n]$, \vskip .1cm
      \item[(c)]   $T_{T_j(k)}\circ T_j = T_{T_k(j)}\circ T_k $,
      \  $\forall\, j,k \in [\pm n]$.
    \end{enumerate}

    The correspondence is just
    mapping a regular facets-pairing structure $\mathcal{F}$ on $\mathcal{C}^n$
    to the tuple of elements
    $(\omega; \widetilde{\sigma}_1, \widetilde{\sigma}_{-1},\cdots,
    \widetilde{\sigma}_n, \widetilde{\sigma}_{-n})$ in $\mathfrak{S}^{\pm}_n$
    defined in~\eqref{Equ:Sigma-Tilde}.\nn

      Two tuples $(\omega ; T_1, T_{-1},\cdots, T_n, T_{-n})$ and
    $(\omega' ; T'_1, T'_{-1},\cdots, T'_n, T'_{-n})$ of elements in $\mathfrak{S}^{\pm}_n$
        are called \textit{shuffled-conjugate} if there exists some
  $S\in \mathfrak{S}^{\pm}_n$ such that:
           $$ \omega= S^{-1} \omega' S; \ \  T_j= S^{-1} T'_{S(j)} S, \ \ \forall\, j\in [\pm n]. $$

  Then by the above definitions, it is easy to see the following.\nn

  \begin{thm} \label{thm:Main-2-Classify}
     Two regular
      facets-pairing structures $\mathcal{F}$ and $\mathcal{F}'$ on $\mathcal{C}^n$
      are equivalent
   if and only if their corresponding tuples of elements
  $(\omega; \widetilde{\sigma}_1, \widetilde{\sigma}_{-1},\cdots, \widetilde{\sigma}_n, \widetilde{\sigma}_{-n})$
      and
 $(\omega';\widetilde{\sigma}'_1, \widetilde{\sigma}'_{-1},\cdots, \widetilde{\sigma}'_n, \widetilde{\sigma}'_{-n})$
    in $\mathfrak{S}^{\pm}_n$ are shuffled-conjugate.
    \end{thm}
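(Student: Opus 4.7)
The plan is to exploit two facts already established: every regular structure map $\tau_j$ is an isometry from $\mathbf{F}(j)$ onto $\mathbf{F}(\omega(j))$, and the symmetry group of an $n$-cube is canonically $\mathfrak{S}^{\pm}_n$. Together these force $\tau_j$ to be completely determined by the induced bijection $\widetilde{\sigma}_j$ on facets. The theorem will follow by reading both implications off this rigidity together with a direct computation on codimension-two faces.

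For the forward direction, assume $\mathcal{F}$ and $\mathcal{F}'$ are equivalent and let $h:\mathcal{C}^n\to\mathcal{C}^n$ realize the equivalence. Let $S\in\mathfrak{S}^{\pm}_n$ be the signed permutation with $h(\mathbf{F}(k))=\mathbf{F}(S(k))$ for every $k\in[\pm n]$; in the language of Definition~\ref{Def:FPS-Equiv} we have $j'=S(j)$. Since the equation $\tau_j=h^{-1}\circ\tau'_{S(j)}\circ h$ must map $\mathbf{F}(j)$ onto $\mathbf{F}(\omega(j))$, tracking the image of $\mathbf{F}(j)$ through the right-hand side gives $\mathbf{F}(\omega(j))=h^{-1}(\mathbf{F}(\omega'(S(j))))$, whence $\omega=S^{-1}\omega'S$. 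Applying the same equation to any codimension-two face $\mathbf{F}(j,k)$ with $|k|\neq|j|$ and unfolding~\eqref{Equ:sigma_j} on both sides yields $\widetilde{\sigma}_j(k)=S^{-1}\widetilde{\sigma}'_{S(j)}(S(k))$; for $k=\pm j$ the same identity follows directly from~\eqref{Equ:omega-sigma} combined with $\omega=S^{-1}\omega'S$. Hence $\widetilde{\sigma}_j=S^{-1}\widetilde{\sigma}'_{S(j)}S$ in $\mathfrak{S}^{\pm}_n$ for every $j$, which is precisely shuffled-conjugacy.

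For the converse, suppose the tuples are shuffled-conjugate via some $S\in\mathfrak{S}^{\pm}_n$, and let $h$ be the unique symmetry of $\mathcal{C}^n$ corresponding to $S$. Fix $j\in[\pm n]$. Both $\tau_j$ and $h^{-1}\circ\tau'_{S(j)}\circ h$ are isometries from $\mathbf{F}(j)$ to $\mathbf{F}(\omega(j))$ (the right-hand target uses $\omega=S^{-1}\omega'S$). A computation identical to the one above shows that both maps send every facet $\mathbf{F}(j,k)$ of $\mathbf{F}(j)$ to the same face $\mathbf{F}(\omega(j),\widetilde{\sigma}_j(k))$: the left-hand map by the definition of $\sigma_j$, and the right-hand map via the shuffled-conjugacy identity $\widetilde{\sigma}_j=S^{-1}\widetilde{\sigma}'_{S(j)}S$. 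Because an isometry between two $(n-1)$-cubes is uniquely determined by its induced bijection on facets---a direct restatement of the fact that the symmetry group of an $(n-1)$-cube is $\mathfrak{S}^{\pm}_{n-1}$---we conclude $\tau_j=h^{-1}\circ\tau'_{S(j)}\circ h$, which is the desired equivalence of $\mathcal{F}$ and $\mathcal{F}'$.

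The only genuinely non-cosmetic step is this final rigidity argument, which is where the regularity hypothesis is used in an essential way; without it the structure maps would merely be face-preserving homeomorphisms and the tuple $(\omega;\widetilde{\sigma}_1,\ldots,\widetilde{\sigma}_{-n})$ could not be expected to capture $\mathcal{F}$ up to equivalence. Everything else is straightforward bookkeeping with signed permutations.
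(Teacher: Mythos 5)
Your proof is correct, and it follows exactly the route the paper intends: the paper states the theorem with only the remark that it ``is easy to see'' from the definitions, and your argument supplies precisely the missing bookkeeping (unwinding $\tau_j = h^{-1}\circ\tau'_{S(j)}\circ h$ on codimension-two faces for the forward direction, and using the rigidity of cube isometries --- that a symmetry of a cube fixing every facet is the identity --- for the converse). The identification of where regularity is genuinely needed, namely that the isometry $\tau_j$ is determined by the induced permutation $\widetilde{\sigma}_j$ on facets, is exactly the point the paper's preceding discussion is set up to deliver.
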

   \n

  So we can formally answer Problem(A) by saying that: \nn

   classifying all the regular
  facets-pairing structures on $\mathcal{C}^n$ up to equivalence is
   the same
    as classifying all tuples of elements
     $(\omega; T_1, T_{-1},\cdots, T_n, T_{-n})$ in
    $\mathfrak{S}^{\pm}_n$ which satisfy (a)---(c)
    up to shuffled-conjugacy. \\

  \section{ Strong vs. Perfect} \label{Section4}

  In this section, we will study the relationship between the strongness and
   the perfectness for regular facets-pairing structures on $\mathcal{C}^n$.
    Suppose $\mathcal{F}=\{ \omega, \tau_j \}_{j\in [\pm n]}$ is a regular facets-pairing structure on
  $\mathcal{C}^n$. For each $j\in [\pm n]$, let
   $\widetilde{\sigma}_j \in \mathfrak{S}^{\pm}_n$ be the
    signed permutation defined by~\eqref{Equ:Sigma-Tilde} and let
    $\widetilde{\tau}_j : \mathcal{C}^n \rightarrow \mathcal{C}^n $ be the
    corresponding symmetry of $\mathcal{C}^n$. \nn

    Since the strongness and
  perfectness are descriptive notions on the face families in a facets-pairing structure, so let us
     examine the structure of the face families in $\mathcal{F}$ first.
  To avoid ambiguity, for any face $f$ of $\mathcal{C}^n$, we call each face belonging to the face family
   $\widehat{f}$ a \textit{component} of $\widehat{f}$. \nn

     Suppose $f=\mathbf{F}(j_1,\cdots, j_s)$. In the following, we will also
      $\widehat{\mathbf{F}}(j_1,\cdots ,j_s)$ to denote the face family $\widehat{f}$.
       For any $1\leq i_1 \leq s$, we can get a component of $\widehat{f}$ by:
      \begin{align*}
      \tau_{\mathbf{F}(j_{i_1})} (f) = \widetilde{\tau}_{j_{i_1}} (f) &=
      \mathbf{F}(\widetilde{\sigma}_{j_{i_1}}(j_1), \cdots, \widetilde{\sigma}_{j_{i_1}}(j_s)) \\
       &= \mathbf{F}(\widetilde{\sigma}_{j_{i_1}}(j_1), \cdots, \widetilde{\sigma}_{j_{i_1}}(j_{i_1-1}),
       \omega(j_{i_1}), \widetilde{\sigma}_{j_{i_1}}(j_{i_1+1}), \cdots, \widetilde{\sigma}_{j_{i_1}}(j_s)).
       \end{align*}
   Then if we choose an arbitrary element in
  $\{ \widetilde{\sigma}_{j_{i_1}}(j_1), \cdots, \widetilde{\sigma}_{j_{i_1}}(j_{s})\}$,
   say $\widetilde{\sigma}_{j_{i_1}}(j_{i_2})$, we can get another component
   $\tau_{\mathbf{F}(\widetilde{\sigma}_{j_{i_1}}(j_{i_2}))}
    \circ \tau_{\mathbf{F}(j_{i_1})} (f)
    =\widetilde{\tau}_{\widetilde{\sigma}_{j_{i_1}}(j_{i_2})} ( \widetilde{\tau}_{j_{i_1}} (f) )$
   in $\widehat{f}$. \nn

    More generally, let
   $i_1,\cdots, i_m \in \{1,\cdots, s\}$ and we define
     $$ k_1 = j_{i_1}\in \{ j_1,\cdots, j_s \},\  \text{and}\qquad $$
  \begin{equation} \label{Equ:Derived-Seq}
     k_p = \widetilde{\sigma}_{k_{p-1}} \circ \cdots \widetilde{\sigma}_{k_1}(j_{i_p}),\
       2 \leq  p \leq m.
\end{equation}
    Obviously, the sequence $(k_1, \cdots, k_m)$
   is completely determined by the sequence $(i_1, \cdots, i_m)$ and
   the $(j_1,\cdots, j_s)$.
  So we call $(k_1, \cdots, k_m)$ the \textit{derived sequence} of
  $(i_1, \cdots, i_m)$ from $(j_1,\cdots, j_s)$.\nn

  By our definition,
  it is easy to see that for any valid form $\tau_{\mathbf{F}(k_m)}\circ\cdots\circ
   \tau_{\mathbf{F}(k_1)}(f)$, there exists a sequence
   $(i_1,\cdots, i_m)$ with $1\leq i_1,\cdots, i_m \leq s$ so that
    $(k_1,\cdots,k_m)$ is the derived sequence of $(i_1,\cdots, i_m)$ from
  $(j_1,\cdots ,j_s)$. We call
  $ \tau_{\mathbf{F}(k_m)}\circ\cdots\circ \tau_{\mathbf{F}(k_1)}(f)$ the
    \textit{face generated by $(i_1, \cdots, i_m)$ from the normal
    form $\mathbf{F}(j_1,\cdots, j_s)$ of $f$}. For convenience, we introduce the following
    notation.
   \begin{align} \label{Equ:Notation-2}
    \qquad \mathbf{F}_{k_1,\cdots,k_m}(j_1,\cdots, j_s)
      & := \widetilde{\tau}_{k_m}\circ\cdots\circ\widetilde{\tau}_{k_1}
      (\mathbf{F}(j_1,\cdots, j_s))  \notag \\
      &\: =
    \mathbf{F}(\widetilde{\sigma}_{k_m}\circ\cdots\circ\widetilde{\sigma}_{k_1}(j_1),\cdots,
      \widetilde{\sigma}_{k_m}\circ\cdots\circ\widetilde{\sigma}_{k_1}(j_s)).
   \end{align}
  Especially when $m=0$, we call $(i_1, \cdots, i_m)$ the empty sequence,
  denoted by $(\varnothing)$, and the derived sequence of $(\varnothing)$ from
  $(j_1,\cdots, j_s)$ is still $(\varnothing)$.
  Then according to Definition~\ref{Defi:Face-Family},
  $ \mathbf{F}_{\varnothing}(j_1,\cdots, j_s) = \mathbf{F}(j_1,\cdots, j_s)$.

  \n

  \begin{rem}
  Generally speaking, if $\mathbf{F}(j'_1,\cdots ,j'_s) = \mathbf{F}(j_1,\cdots ,j_s)$
  but $(j'_1,\cdots ,j'_s)$ and $(j_1,\cdots ,j_s)$ are not the same sequence,
   the face generated by
  $(i_1, \cdots, i_m)$
     from $\mathbf{F}(j_1,\cdots, j_s)$ and from $\mathbf{F}(j'_1,\cdots ,j'_s)$ are
     very likely to be different.
     So the order of the $j_1,\cdots ,j_s$ in the notation
     $\mathbf{F}_{k_1,\cdots,k_m}(j_1,\cdots, j_s)$ is essential.
  \end{rem}
    \n

    To emphasize the order of the $j_1,\cdots, j_s$ in a normal form
    $\mathbf{F}(j_1,\cdots ,j_s)$, we introduce a new
    notation as following.
    \nn

  \begin{defi} [Strongly Equal]
     \

    Two normal forms
      $\mathbf{F}(j_1,\cdots ,j_s)$ and $\mathbf{F}(j'_1,\cdots ,j'_s)$ are called
     \textit{strongly equal} if
     $(j_1,\cdots ,j_s) = (j'_1,\cdots ,j'_s)$ as a sequence and we
     write $\mathbf{F}(j_1,\cdots ,j_s) \equiv \mathbf{F}(j'_1,\cdots ,j'_s)$. \nn
  \end{defi}

   In the rest of the paper, we always understand $\mathbf{F}_{k_1,\cdots,k_m}(j_1,\cdots, j_s)$ as
     $$\mathbf{F}_{k_1,\cdots,k_m}(j_1,\cdots, j_s)
     \equiv \mathbf{F}(\widetilde{\sigma}_{k_m}\circ\cdots\circ\widetilde{\sigma}_{k_1}(j_1),\cdots,
      \widetilde{\sigma}_{k_m}\circ\cdots\circ\widetilde{\sigma}_{k_1}(j_s)).$$
     If $(k_1,\cdots,k_m)$ is the derived sequence of $(i_1,\cdots, i_m)$ from
  $(j_1,\cdots ,j_s)$, we call $\mathbf{F}_{k_1,\cdots,k_m}(j_1,\cdots, j_s)$
   the \textit{normal form generated by $(i_1,\cdots, i_m)$ from} $\mathbf{F}(j_1,\cdots ,j_s)$.
   We will see that this convention is very convenient for our
     proofs.
  \nn

  \begin{lem} \label{Lem:First-Simple}
    For a
     sequence $(i_1, \cdots, i_m)$ with $1\leq i_1,\cdots, i_m \leq s$,
     let $(k_1,\cdots, k_m)$ be the derived
     sequence of $(i_1, \cdots, i_m)$ from $(j_1,\cdots, j_s)$. For
      any $1\leq p \leq m$, let
     $\mathbf{F}(j'_1,\cdots, j'_s) \equiv \mathbf{F}_{k_1,\cdots,k_p}(j_1,\cdots, j_s)$.
     Then $(k_{p+1},\cdots,k_m)$ is the derived sequence of $(i_{p+1},\cdots, i_m)$ from
     $(j'_1,\cdots, j'_s)$ and we have
     \[  \mathbf{F}_{k_1,\cdots,k_m}(j_1,\cdots, j_s) \
     \equiv \mathbf{F}_{k_{p+1},\cdots,k_m}(j'_1,\cdots, j'_s) \]
  \end{lem}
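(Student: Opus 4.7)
The plan is to unpack both assertions directly from the definitions and to establish the matching of derived sequences by a short induction; no axiom on the $\widetilde{\sigma}_j$'s beyond the definitions~\eqref{Equ:Derived-Seq} and~\eqref{Equ:Notation-2} is required, so the lemma is essentially a bookkeeping fact whose only care is keeping the indices $i$, $j$, $k$ straight.

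By the convention fixing strong equality, the tuple $(j'_1,\ldots,j'_s)$ satisfying $\mathbf{F}(j'_1,\ldots,j'_s)\equiv\mathbf{F}_{k_1,\ldots,k_p}(j_1,\ldots,j_s)$ is forced by~\eqref{Equ:Notation-2} to be $j'_t = \widetilde{\sigma}_{k_p}\circ\cdots\circ\widetilde{\sigma}_{k_1}(j_t)$ for every $1\le t\le s$. Let $(k'_1,\ldots,k'_{m-p})$ denote the derived sequence of $(i_{p+1},\ldots,i_m)$ from $(j'_1,\ldots,j'_s)$; I would prove $k'_q = k_{p+q}$ by induction on $q$. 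For $q=1$, the recursion~\eqref{Equ:Derived-Seq} applied to the primed data gives $k'_1 = j'_{i_{p+1}} = \widetilde{\sigma}_{k_p}\circ\cdots\circ\widetilde{\sigma}_{k_1}(j_{i_{p+1}})$, and the right-hand side is precisely $k_{p+1}$ by the same recursion applied to the unprimed data. For the inductive step, the hypothesis $k'_r = k_{p+r}$ for $r<q$ turns the recursion for $k'_q$ into $\widetilde{\sigma}_{k_{p+q-1}}\circ\cdots\circ\widetilde{\sigma}_{k_{p+1}}(j'_{i_{p+q}})$; expanding $j'_{i_{p+q}}$ via the displayed formula telescopes this to $\widetilde{\sigma}_{k_{p+q-1}}\circ\cdots\circ\widetilde{\sigma}_{k_1}(j_{i_{p+q}})$, which equals $k_{p+q}$ by~\eqref{Equ:Derived-Seq}.

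Once this is established, the strong equality of the two normal forms is immediate from~\eqref{Equ:Notation-2}: the left-hand side $\mathbf{F}_{k_1,\ldots,k_m}(j_1,\ldots,j_s)$ expands coordinate-wise as $\mathbf{F}(\ldots,\widetilde{\sigma}_{k_m}\circ\cdots\circ\widetilde{\sigma}_{k_1}(j_t),\ldots)$, while the right-hand side $\mathbf{F}_{k_{p+1},\ldots,k_m}(j'_1,\ldots,j'_s)$ expands as $\mathbf{F}(\ldots,\widetilde{\sigma}_{k_m}\circ\cdots\circ\widetilde{\sigma}_{k_{p+1}}(j'_t),\ldots)$; substituting the formula for $j'_t$ collapses the second expression into the first coordinate-by-coordinate, which is exactly the meaning of $\equiv$.

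There is no genuine obstacle here. The only point that needs vigilance is that the entire lemma lives at the level of ordered tuples under $\equiv$, so one must respect the order of the $j_t$'s throughout and avoid treating $\mathbf{F}(j_1,\ldots,j_s)$ and a reordering of its entries as interchangeable; this is exactly the warning given in the remark just before the notion of strongly equal was introduced.
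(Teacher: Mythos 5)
Your proof is correct and is essentially the same as the paper's, which simply asserts that the lemma "follows easily from the definition of both sides of the equation"; you have just spelled out the routine induction and the telescoping of the $\widetilde{\sigma}$'s that the author leaves implicit.
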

  \begin{proof}
     It follows easily from the definition of both sides of the equation.
   \end{proof}

    \nn

  \begin{lem} \label{Lem:Second-Simple}
    For a
     sequence $(i_1, \cdots, i_m)$ with $1\leq i_1,\cdots, i_m \leq s$,
     let $(k_1,\cdots, k_m)$ be the derived
     sequence of $(i_1, \cdots, i_m)$ from $(j_1,\cdots, j_s)$.
     Suppose
     $\mathbf{F}(j'_1,\cdots, j'_s) \equiv \mathbf{F}_{k_1,\cdots,k_m}(j_1,\cdots, j_s)$.
     Then $(k_m,\cdots,k_1)$ is the derived sequence of
     $(i_m,\cdots, i_1)$ from $(j'_1,\cdots, j'_s)$ and
      $ \mathbf{F}(j_1,\cdots, j_s)  \equiv \mathbf{F}_{k_m,\cdots,k_1}(j'_1,\cdots,
     j'_s)$.
  \end{lem}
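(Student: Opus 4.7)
The plan is to proceed by induction on $m$, with Lemma~\ref{Lem:First-Simple} (peel off the first step) and the involutivity relation \eqref{Equ:Cube-Commute-6} (equivalently $\widetilde{\tau}_{\omega(j)} = \widetilde{\tau}_j^{-1}$ at the level of symmetries of $\mathcal{C}^n$) as the two core tools. Conceptually the lemma asserts that any trip through a face family admits a reverse trip along the family, and the real content is that this reverse trip is itself recorded by a valid derived sequence of the prescribed shape.

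For the base case $m=1$ I compute directly: the hypothesis $\mathbf{F}(j'_1, \ldots, j'_s) \equiv \mathbf{F}_{k_1}(j_1, \ldots, j_s)$ unpacks to $j'_t = \widetilde{\sigma}_{k_1}(j_t)$ for every $t$. Since $k_1 = j_{i_1}$, equation~\eqref{Equ:omega-sigma} gives $j'_{i_1} = \widetilde{\sigma}_{k_1}(k_1) = \omega(k_1)$; applying $\widetilde{\tau}_{\omega(k_1)} = \widetilde{\tau}_{k_1}^{-1}$ to $\mathbf{F}(j'_1, \ldots, j'_s)$ then recovers $\mathbf{F}(j_1, \ldots, j_s)$, and both halves of the conclusion drop out.

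For the inductive step I set $(\widetilde{j}_1, \ldots, \widetilde{j}_s) \equiv \mathbf{F}_{k_1}(j_1, \ldots, j_s)$ and invoke Lemma~\ref{Lem:First-Simple} with $p=1$ to extract that $(k_2, \ldots, k_m)$ is the derived sequence of $(i_2, \ldots, i_m)$ from $(\widetilde{j}_1, \ldots, \widetilde{j}_s)$ and that $\mathbf{F}(j'_1, \ldots, j'_s) \equiv \mathbf{F}_{k_2, \ldots, k_m}(\widetilde{j}_1, \ldots, \widetilde{j}_s)$. The induction hypothesis, applied at length $m-1$, reverses the journey from $(j'_1, \ldots, j'_s)$ back to $(\widetilde{j}_1, \ldots, \widetilde{j}_s)$. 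A final base-case reversal via $\widetilde{\tau}_{k_1}^{-1}$ returns $(\widetilde{j}_1, \ldots, \widetilde{j}_s)$ to $(j_1, \ldots, j_s)$. A second appeal to Lemma~\ref{Lem:First-Simple} (now used to concatenate rather than peel) glues the length-$(m-1)$ reverse derived sequence to this length-$1$ tail, producing a single length-$m$ derived sequence of the asserted form and simultaneously giving the strong-equality of normal forms.

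I expect the main obstacle to be the indexwise bookkeeping. Because $k_p$ depends recursively on every earlier $k_1, \ldots, k_{p-1}$ via $k_p = \widetilde{\sigma}_{k_{p-1}} \circ \cdots \circ \widetilde{\sigma}_{k_1}(j_{i_p})$, checking entry by entry that the reverse derived sequence produced by the induction matches $(k_m, \ldots, k_1)$ requires repeatedly collapsing pairs of the form $\widetilde{\sigma}_{\omega(k_p)} \circ \widetilde{\sigma}_{k_p} = \mathrm{id}$ from \eqref{Equ:Cube-Commute-6}, together with the identity $\widetilde{\sigma}_{k_p}(k_p) = \omega(k_p)$ from~\eqref{Equ:omega-sigma}. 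Once the induction skeleton is fixed, the remaining computation is routine but requires care in which composition is inverted at each step.
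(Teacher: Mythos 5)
Your proposal is correct and is essentially the paper's argument: the paper likewise reverses the trip one structure map at a time using $j'_{i_p}=\omega(k_p)$ and $\widetilde{\sigma}_{\omega(k_p)}\circ\widetilde{\sigma}_{k_p}=\mathrm{id}_{[\pm n]}$, merely phrasing the induction as an $m$-step iteration that peels the last map $\widetilde{\tau}_{k_m}$ off first, rather than your front-first induction organized around Lemma~\ref{Lem:First-Simple}. The bookkeeping subtlety you anticipate (the entries of the reversing derived sequence naturally come out as $\omega(k_p)$ rather than $k_p$) is present in the paper's own proof in exactly the same form, so it is not a defect of your approach.
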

  \begin{proof}
    Since
     $ \mathbf{F}_{k_1,\cdots,k_m}(j_1,\cdots, j_s) \equiv \mathbf{F}(\widetilde{\sigma}_{k_m}\circ\cdots\circ\widetilde{\sigma}_{k_1}(j_1),\cdots,
      \widetilde{\sigma}_{k_m}\circ\cdots\circ\widetilde{\sigma}_{k_1}(j_s))$,
      so $j'_i
      =\widetilde{\sigma}_{k_m}\circ\cdots\circ\widetilde{\sigma}_{k_1}(j_i)$
      for each $1\leq i \leq s$. Notice that
    $j'_{i_m} = \widetilde{\sigma}_{k_m}(k_m)=\omega(k_m)$.
    So since $\widetilde{\sigma}_{\omega(k_m)}\circ \widetilde{\sigma}_{k_m} = id_{[\pm n]}$,
    the face generated by $(i_m)$ from $\mathbf{F}(j'_1,\cdots, j'_s)$ is
     \begin{align*}
     \widetilde{\tau}_{j'_{i_m}}(\mathbf{F}(j'_1,\cdots, j'_s) ) & \equiv
     \widetilde{\tau}_{\omega(k_m)}(\mathbf{F}(\widetilde{\sigma}_{k_m}\circ\cdots\circ\widetilde{\sigma}_{k_1}(j_1),\cdots,
      \widetilde{\sigma}_{k_m}\circ\cdots\circ\widetilde{\sigma}_{k_1}(j_s))) \\
      &\equiv \mathbf{F}(\widetilde{\sigma}_{k_{m-1}}\circ\cdots\circ\widetilde{\sigma}_{k_1}(j_1),\cdots,
      \widetilde{\sigma}_{k_{m-1}}\circ\cdots\circ\widetilde{\sigma}_{k_1}(j_s)).
     \end{align*}
   Note that the order of the indices in $\mathbf{F}(j'_1,\cdots, j'_s) $ and
    $ \mathbf{F}_{k_1,\cdots,k_m}(j_1,\cdots, j_s)$ are crucial to make the formula work here.\n

    Furthermore, since
   $\widetilde{\sigma}_{k_{m-1}}\circ\cdots\circ\widetilde{\sigma}_{k_1}(j_{i_{m-1}}) = \omega(k_{m-1})$,
   we can repeat the above argument and show that the face generated
   by $(i_m, i_{m-1})$ from $\mathbf{F}(j'_1,\cdots, j'_s)$ is strongly equal to
    $\mathbf{F}(\widetilde{\sigma}_{k_{m-2}}\circ\cdots\circ\widetilde{\sigma}_{k_1}(j_1),\cdots,
      \widetilde{\sigma}_{k_{m-2}}\circ\cdots\circ\widetilde{\sigma}_{k_1}(j_s))$.
  Then after $m$ steps,
  the face generated by $(i_m, \cdots, i_1)$ from $\mathbf{F}(j'_1,\cdots, j'_s)$
   is exactly strongly equal to $\mathbf{F}(j_1,\cdots, j_s)$. By our
   construction, it is clear that $(k_m,\cdots,k_1)$ is the derived sequence of
     $(i_m,\cdots, i_1)$ from $(j'_1,\cdots, j'_s)$.
    \end{proof}
     \nn

  \begin{exam} \label{Exam:Face-Family}
  All the possible faces in the face family
  containing $\mathbf{F}(j,k)$ are: $\mathbf{F}(j,k)$,
  $\mathbf{F}(\omega(j),\widetilde{\sigma}_j(k))$, $\mathbf{F}(\widetilde{\sigma}_k(j),\omega(k))$
  and
 $\mathbf{F}(\omega(\widetilde{\sigma}_k(j)), \omega(\widetilde{\sigma}_j(k)))$ whose
  relations are shown in the following diagram.
  \[  \xymatrix{
  \mathbf{F}(\widetilde{\sigma}_k(j),\omega(k)) \ \ar[r]^{ \widetilde{\tau}_{\widetilde{\sigma}_k(j)}\,\quad \ }
   &  \ \mathbf{F}(\omega(\widetilde{\sigma}_k(j)), \omega(\widetilde{\sigma}_j(k)))   \\
  \mathbf{F}(j,k) \ar[u]_{\widetilde{\tau}_{k}} \ar[r]_{\widetilde{\tau}_j\quad\,  \ } & \,
  \mathbf{F}(\omega(j),\widetilde{\sigma}_j(k))
   \ar[u]^{\widetilde{\tau}_{\widetilde{\sigma}_j(k)}}}
    \]

 Of course, we can use longer sequences to generate faces
  with more complicated forms from $\mathbf{F}(j,k)$.
  But we will not get any new face. For example,
 \[ \widetilde{\tau}_{\omega(\widetilde{\sigma}_k(j))} \left(
  \mathbf{F}(\omega(\widetilde{\sigma}_k(j)), \omega(\widetilde{\sigma}_j(k))) \right)
   \overset{\eqref{Equ:omega-sigma}}{=}
   \mathbf{F}(\widetilde{\sigma}_k(j), \widetilde{\sigma}_{\omega(\widetilde{\sigma}_k(j))}
   (\omega(\widetilde{\sigma}_j(k)))). \]
  Since $ \widetilde{\sigma}_{\omega(\widetilde{\sigma}_k(j))}
   (\omega(\widetilde{\sigma}_j(k))) \overset{\eqref{Equ:Cube-Commute-4}}{=}
    \widetilde{\sigma}_{ \widetilde{\sigma}_{\widetilde{\sigma}_j(k)} (\omega(j))}
   (\omega(\widetilde{\sigma}_j(k)))
   \overset{\eqref{Equ:Cube-Commute-4}}{=}
   \omega((\widetilde{\sigma}_{\omega(j)} (\widetilde{\sigma}_j(k)) ))
    \overset{\eqref{Equ:Cube-Commute-6}}{=}  \omega(k)
  $, so $\mathbf{F}(\widetilde{\sigma}_k(j), \widetilde{\sigma}_{\omega(\widetilde{\sigma}_k(j))}
   (\omega(\widetilde{\sigma}_j(k)))) =
   \mathbf{F}(\widetilde{\sigma}_k(j),\omega(k))$ is not a new face.\nn

  In addition, if we assume
  $\mathbf{F}(\omega(j),\widetilde{\sigma}_j(k)) = \mathbf{F}(\widetilde{\sigma}_k(j),\omega(k))$,
  then we must have $\widetilde{\sigma}_j(k)= \omega(k)$ and $\widetilde{\sigma}_k(j) = \omega(j)$.
  Then $\mathbf{F}(\omega(\widetilde{\sigma}_k(j)), \omega(\widetilde{\sigma}_j(k)))
   = \mathbf{F}(j,k)$,
  which implies that the face family $ \widehat{\mathbf{F}}(j,k)$ has only two
  components. \nn

  In general, we will see in Theorem~\ref{thm:Component-1}
  that the number of components in any face
  family in a strong regular facets-pairing structure must be a
  power of $2$.
 \end{exam}

  \n

  \begin{lem} \label{Lem:Order-Indep}
   Suppose $(k_1,\cdots,k_m)$ and $(k'_1,\cdots, k'_m)$ are the derived
  sequences of $(i_1,\cdots, i_m)$ and $(i'_1,\cdots, i'_m)$ from
  $(j_1,\cdots ,j_s)$ respectively. If $(i'_1, \cdots, i'_m)$ is just a
  permutation of the entries in $(i_1, \cdots, i_m)$, then we have\n
   \begin{enumerate}
     \item[(a)] $ \mathbf{F}_{k_1,\cdots,k_m}(j_1,\cdots, j_s)
            \equiv \mathbf{F}_{k'_1,\cdots,k'_m}(j_1,\cdots, j_s)$.\n
     \item[(b)] $\widetilde{\sigma}_{k_m} \circ\cdots\circ\widetilde{\sigma}_{k_1} =
        \widetilde{\sigma}_{k'_m} \circ\cdots\circ
        \widetilde{\sigma}_{k'_1}$ as a permutation on $[\pm n]$.
   \end{enumerate}
   \end{lem}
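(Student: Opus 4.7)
The plan is to prove (b) first, from which (a) follows immediately: once we know
$\widetilde{\sigma}_{k_m}\circ\cdots\circ\widetilde{\sigma}_{k_1}
= \widetilde{\sigma}_{k'_m}\circ\cdots\circ\widetilde{\sigma}_{k'_1}$
as permutations of $[\pm n]$, the formula in~\eqref{Equ:Notation-2} says each $\mathbf{F}_{k_1,\dots,k_m}(j_1,\dots,j_s)$ and $\mathbf{F}_{k'_1,\dots,k'_m}(j_1,\dots,j_s)$ is the normal form obtained by applying this common permutation to the ordered tuple $(j_1,\dots,j_s)$, so they are strongly equal.

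To prove (b), I would reduce to the case of a single adjacent transposition, since any permutation of $\{1,\dots,m\}$ is a product of adjacent transpositions and the derived-sequence/composition assignment can be iterated. So suppose $(i'_1,\dots,i'_m)$ agrees with $(i_1,\dots,i_m)$ except that $i'_p = i_{p+1}$ and $i'_{p+1} = i_p$. For $q<p$ the derived entries coincide: $k_q = k'_q$. Set $\alpha = \widetilde{\sigma}_{k_{p-1}}\circ\cdots\circ\widetilde{\sigma}_{k_1}$. Then
\[
 k_p = \alpha(j_{i_p}),\quad k_{p+1} = \widetilde{\sigma}_{\alpha(j_{i_p})}(\alpha(j_{i_{p+1}})),\quad
 k'_p = \alpha(j_{i_{p+1}}),\quad k'_{p+1} = \widetilde{\sigma}_{\alpha(j_{i_{p+1}})}(\alpha(j_{i_p})).
\]
The heart of the argument is to observe that equation~\eqref{Equ:Cube-Commute-5}, read as an identity of signed permutations rather than pointwise, says exactly
\[
 \widetilde{\sigma}_{\widetilde{\sigma}_a(b)}\circ \widetilde{\sigma}_a
 \;=\; \widetilde{\sigma}_{\widetilde{\sigma}_b(a)}\circ \widetilde{\sigma}_b \quad \text{on } [\pm n].
\]
Applying this with $a=\alpha(j_{i_p})$, $b=\alpha(j_{i_{p+1}})$, and composing with $\alpha$ on the right, gives
\[
 \widetilde{\sigma}_{k_{p+1}}\circ\widetilde{\sigma}_{k_p}\circ\alpha
 \;=\; \widetilde{\sigma}_{k'_{p+1}}\circ\widetilde{\sigma}_{k'_p}\circ\alpha.
\]

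Once these two composed permutations through position $p+1$ coincide, a straightforward induction on $q\geq p+2$ shows $k_q=k'_q$: indeed $k_q = (\widetilde{\sigma}_{k_{q-1}}\circ\cdots\circ\widetilde{\sigma}_{k_1})(j_{i_q})$, and by the inductive hypothesis the corresponding left factors for the two sequences agree and $i'_q=i_q$. This proves (b) for a single adjacent transposition, and iterating yields (b) for arbitrary permutations. The main obstacle is simply keeping the bookkeeping straight for the derived sequences at and past the swapped positions; the algebraic content is entirely~\eqref{Equ:Cube-Commute-5} reinterpreted as an equality of permutations, which handles the base case of adjacent swaps in one line. Lemma~\ref{Lem:First-Simple} may be invoked to localize the argument to the swap location, but the inductive proof above already does this bookkeeping directly.
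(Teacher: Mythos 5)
Your proof is correct and follows essentially the same route as the paper's: both reduce to a single adjacent transposition and then apply~\eqref{Equ:Cube-Commute-5}, read as an identity of signed permutations, to handle the swap. Your explicit bookkeeping with the prefix permutation $\alpha$ plays exactly the role the paper delegates to Lemma~\ref{Lem:First-Simple} (localizing the transposition to the first two positions), so the difference is only presentational.
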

\begin{proof}
   First, we assume the sequence $(i'_1, \cdots, i'_m)$ is just a transposition of two
   neighboring entries in $(i_1, \cdots, i_m)$. By
   Lemma~\ref{Lem:First-Simple}, it suffice to prove the case when
   $(i'_1, i'_2, i'_3, \cdots, i'_m) = (i_2, i_1, i_3, \cdots, i_m)$. By
   definition, we have
   \begin{align*}
     \mathbf{F}_{k_1, k_2}(j_1,\cdots, j_s)
    & \equiv \left( \widetilde{\sigma}_{\widetilde{\sigma}_{j_1}(j_2)}(\widetilde{\sigma}_{j_1}(j_1)),
    \widetilde{\sigma}_{\widetilde{\sigma}_{j_1}(j_2)}(\widetilde{\sigma}_{j_1}(j_2)),
     \widetilde{\sigma}_{\widetilde{\sigma}_{j_1}(j_2)}(\widetilde{\sigma}_{j_1}(j_3)),
     \cdots,
     \widetilde{\sigma}_{\widetilde{\sigma}_{j_1}(j_2)}(\widetilde{\sigma}_{j_1}(j_s))
     \right) \\
   \mathbf{F}_{k'_1, k'_2}(j_1,\cdots, j_s)
   &= \left(  \widetilde{\sigma}_{\widetilde{\sigma}_{j_2}(j_1)}(\widetilde{\sigma}_{j_2}(j_1)),
      \widetilde{\sigma}_{\widetilde{\sigma}_{j_2}(j_1)}(\widetilde{\sigma}_{j_2}(j_2)),
     \widetilde{\sigma}_{\widetilde{\sigma}_{j_2}(j_1)}(\widetilde{\sigma}_{j_2}(j_3)),
     \cdots,
     \widetilde{\sigma}_{\widetilde{\sigma}_{j_2}(j_1)}(\widetilde{\sigma}_{j_2}(j_s))  \right)
     \end{align*}

   By~\eqref{Equ:Cube-Commute-5},
      $\widetilde{\sigma}_{\widetilde{\sigma}_{j_1}(j_2)} \circ \widetilde{\sigma}_{j_1}
       (l) =  \widetilde{\sigma}_{\widetilde{\sigma}_{j_2}(j_1)}\circ
      \widetilde{\sigma}_{j_2} (l)$ for any $l\in [\pm n]$, so
      $$
      \mathbf{F}_{k_1, k_2}(j_1,\cdots, j_s) \equiv \mathbf{F}_{k'_1, k'_2}(j_1,\cdots,
      j_s).$$
      Then Lemma~\ref{Lem:First-Simple} implies that  $\mathbf{F}_{k_1,\cdots,k_m}(j_1,\cdots, j_s)$
      and $\mathbf{F}_{k'_1, \cdots, k'_m}(j_1,\cdots, j_s)$
      are generated by the same sequence $(i_3,\cdots, i_m)$ from the same normal form.
       So we get (a) and (b) for this case. \n

      For the general cases, since any permutation of the entries in
      $(i_1, \cdots, i_m)$ is a composite of transpositions of
      neighboring entries, so the lemma follows from the above special case.
        \end{proof}
   \n

 \begin{lem} \label{Lem:Component-0}
   Suppose $1\leq i_1,\cdots, i_m \leq s$  and $(k_1,\cdots,k_m)$ is the derived
  sequence of $(i_1,\cdots, i_m)$ from $(j_1,\cdots ,j_s)$.
  Then there exists a subsequence $(i_{l_1},\cdots, i_{l_r})$
  of $(i_1,\cdots, i_m)$ with $i_{l_1},\cdots, i_{l_r}$  pairwise distinct so that
  \[ \mathbf{F}_{k_1,\cdots, k_m}(j_1, \cdots, j_s) \equiv \mathbf{F}_{k'_1,\cdots, k'_r}(j_1, \cdots, j_s) \]
 where $(k'_1,\cdots, k'_r)$ is the derived
  sequence of $(i_{l_1},\cdots, i_{l_r})$ from $(j_1,\cdots ,j_s)$.
 \end{lem}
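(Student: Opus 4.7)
The plan is to induct on $m$, using Lemma~\ref{Lem:Order-Indep} to freely reorder the generating sequence $(i_1,\ldots,i_m)$, and then exploiting the relation $\widetilde{\sigma}_{\omega(j)}\circ\widetilde{\sigma}_j=id_{[\pm n]}$ from~\eqref{Equ:Cube-Commute-6} to cancel any pair of repeated entries.

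If all of $i_1,\ldots,i_m$ are pairwise distinct, I simply take $(i_{l_1},\ldots,i_{l_r})=(i_1,\ldots,i_m)$ and there is nothing to prove. Otherwise, pick $p<q$ with $i_p=i_q$. Using Lemma~\ref{Lem:Order-Indep}(a), I reorder $(i_1,\ldots,i_m)$ into $(i_p,i_q,i_{\sigma(3)},\ldots,i_{\sigma(m)})$, where $(i_{\sigma(3)},\ldots,i_{\sigma(m)})$ lists the remaining entries in some order; this reordering produces a strongly-equal face. For this new sequence, the first two entries of the derived sequence are $k'_1=j_{i_p}$ and $k'_2=\widetilde{\sigma}_{k'_1}(j_{i_q})=\widetilde{\sigma}_{k'_1}(k'_1)=\omega(k'_1)$. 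Hence $\widetilde{\tau}_{k'_2}\circ\widetilde{\tau}_{k'_1}$ corresponds via~\eqref{Equ:Cube-Commute-6} to $\widetilde{\sigma}_{\omega(k'_1)}\circ\widetilde{\sigma}_{k'_1}=id_{[\pm n]}$, so it is the identity symmetry of $\mathcal{C}^n$. Consequently the normal form obtained after these two steps is strongly equal to $\mathbf{F}(j_1,\ldots,j_s)$, and the subsequent steps with indices $(i_{\sigma(3)},\ldots,i_{\sigma(m)})$ generate exactly the same face as the shorter sequence $(i_{\sigma(3)},\ldots,i_{\sigma(m)})$ does from $\mathbf{F}(j_1,\ldots,j_s)$ (here I invoke Lemma~\ref{Lem:First-Simple} with $p=2$ to peel off the initial identity piece).

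Now $(i_{\sigma(3)},\ldots,i_{\sigma(m)})$ is a permutation of the length-$(m-2)$ subsequence of $(i_1,\ldots,i_m)$ obtained by deleting positions $p$ and $q$. Applying Lemma~\ref{Lem:Order-Indep}(a) once more, the face it generates is strongly equal to the face generated by the order-preserving subsequence $(i_1,\ldots,\widehat{i_p},\ldots,\widehat{i_q},\ldots,i_m)$ from $(j_1,\ldots,j_s)$. By the induction hypothesis on $m$, this subsequence admits a further order-preserving sub-subsequence with pairwise distinct entries whose derived sequence generates the same face; since a subsequence of a subsequence of $(i_1,\ldots,i_m)$ is itself a subsequence of $(i_1,\ldots,i_m)$, this completes the induction.

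The only place requiring a little care is the bookkeeping of derived sequences after reordering: Lemma~\ref{Lem:Order-Indep} ensures that the \emph{face} is unchanged by reorderings, while Lemma~\ref{Lem:First-Simple} lets me split off the first two (canceling) steps cleanly. Apart from this, the argument is essentially a repeated application of the involutivity identity~\eqref{Equ:Cube-Commute-6}, and I do not anticipate a substantial obstacle.
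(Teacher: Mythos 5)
Your proof is correct and follows essentially the same route as the paper: reorder via Lemma~\ref{Lem:Order-Indep} to bring a repeated pair $i_p=i_q$ to the front, observe that the first two derived entries are $j_{i_p}$ and $\omega(j_{i_p})$ so that~\eqref{Equ:Cube-Commute-6} cancels them, peel them off with Lemma~\ref{Lem:First-Simple}, and iterate. Your explicit induction on $m$ is, if anything, a slightly cleaner way of organizing the paper's "repeat until no pairs remain" step.
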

 \begin{proof}
 If $i_1,\cdots, i_m$ are already pairwise distinct, there is nothing to
 prove. If there exist
  $i_l = i_{l'}$ for some $1\leq l < l' \leq m$,  we
   permutate the entries of $(i_1,\cdots, i_m)$ to get a new sequence below:
   \begin{equation} \label{Equ:Seq-1}
     (i_l,i_{l'}, i_1, \cdots, \widehat{i}_{l}, \cdots,
     \widehat{i}_{l'}, \cdots, i_m)
   \end{equation}
   Let $(d_1,\cdots, d_m)$ be the derived sequence of
   the sequence~\eqref{Equ:Seq-1} from $(j_1, \cdots, j_s)$.
   By Lemma~\ref{Lem:Order-Indep}, we have
    $\mathbf{F}_{k_1,\cdots, k_m}(j_1, \cdots, j_s)\equiv
   \mathbf{F}_{d_1,\cdots, d_m}(j_1, \cdots, j_s)$.\n

   Notice that $d_1=j_{i_l}$ and $d_2=\omega(j_{i_l})$ since $i_l = i_{l'}$.
   So by~\eqref{Equ:Cube-Commute-6}, we get
    $$\mathbf{F}_{d_1,d_2}(j_1, \cdots, j_s)\equiv
   \mathbf{F}(j_1, \cdots, j_s).$$
   By Lemma~\ref{Lem:First-Simple},
   $\mathbf{F}_{k_1,\cdots, k_m}(j_1, \cdots, j_s)\equiv
   \mathbf{F}_{d_1,\cdots, d_m}(j_1, \cdots, j_s)\equiv
   \mathbf{F}_{d_3,\cdots, d_m}(j_1, \cdots, j_s)$,
   where $(d_3,\cdots, d_m)$ is the derived sequence of
   $(i_1, \cdots, \widehat{i}_{l}, \cdots,
     \widehat{i}_{l'}, \cdots, i_m)$ from $(j_1, \cdots, j_s)$.
   \n

   The above argument implies that
  if $i_1,\cdots, i_m$ are not pairwise distinct, we can
  delete those entries which appear even number of times in it and reduce
  $(i_1,\cdots, i_m)$ to a shorter sequence
  which still generates the same normal form from
  $\mathbf{F}(j_1, \cdots, j_s)$ as $(i_1,\cdots, i_m)$ does. So the lemma is proved.
\end{proof}
   \n

  \begin{defi}[Irreducible Sequence]
   If all the entries in a sequence $(i_1,\cdots, i_m)$ are pairwise
  distinct, we call $(i_1,\cdots, i_m)$ an \textit{irreducible sequence}.
  Otherwise, we call it \textit{reducible}.\n

   If a sequence $(i_1,\cdots, i_m)$ is
  reducible, we can delete those entries which appear even number of times in it and reduce
  $(i_1,\cdots, i_m)$ to an irreducible sequence.
  \end{defi}

  \n

  By Lemma~\ref{Lem:Order-Indep}, any permutation of entries in an irreducible
  sequence $(i_1,\cdots, i_r)$ will not change the normal form generated from
  $\mathbf{F}(j_1, \cdots, j_s)$. So
   any subset $\Sigma=\{ i_1,\cdots, i_r \} \subset \{ 1, \cdots, s \}$
   determines a unique normal form of a unique component in
   the face family $\widehat{\mathbf{F}}(j_1, \cdots, j_s)$,
    denoted by $\mathbf{F}(j_1, \cdots, j_s)^{\Sigma}$.
   We call $\mathbf{F}(j_1, \cdots, j_s)^{\Sigma}$ the \textit{normal form
    generated by $\Sigma$ from} $\mathbf{F}(j_1, \cdots, j_s)$.
   Note that
     the order of $j_1, \cdots, j_s$ in the notation
     $\mathbf{F}(j_1, \cdots, j_s)^{\Sigma}$ is still essential.\nn

   Then by Lemma~\ref{Lem:Component-0},
   any component in $\widehat{\mathbf{F}}(j_1, \cdots, j_s)$ can be generated by
   some subset of $\{ 1, \cdots, s \}$ (including the empty set) from $\mathbf{F}(j_1, \cdots, j_s)$.
  Since there are a total of $2^s$ subsets of $\{ 1, \cdots, s \}$,
   we have the following corollary. \nn

    \begin{cor} \label{Cor:Number-Components}
     For any regular facets-pairing structure $\mathcal{F}$ on
     $\mathcal{C}^n$, there are at most $2^s$ components in any
     $(n-s)$-dimensional face family of $\mathcal{F}$.
   So the number of components in a face family
  of $\mathcal{F}$ reaches the maximum exactly when the face family is
  perfect. \nn
  \end{cor}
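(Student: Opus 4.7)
The plan is to package the two preceding lemmas into a short counting argument. I would first observe that, by Definition \ref{Defi:Face-Family}, every component of $\widehat{\mathbf{F}}(j_1,\ldots,j_s)$ is of the form $\mathbf{F}_{k_1,\ldots,k_m}(j_1,\ldots,j_s)$ for some sequence $(i_1,\ldots,i_m)$ with $1\le i_t\le s$, where $(k_1,\ldots,k_m)$ is its derived sequence from $(j_1,\ldots,j_s)$. By Lemma \ref{Lem:Component-0}, I may replace this sequence by an irreducible subsequence producing the same normal form, and by Lemma \ref{Lem:Order-Indep}(a) any permutation of the entries of an irreducible sequence yields a strongly equal normal form. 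Therefore the component depends only on the underlying subset $\Sigma\subseteq\{1,\ldots,s\}$ of indices appearing in the irreducible sequence, i.e.\ it equals $\mathbf{F}(j_1,\ldots,j_s)^{\Sigma}$.

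Consequently, the map $\Sigma\mapsto\mathbf{F}(j_1,\ldots,j_s)^{\Sigma}$ from the power set of $\{1,\ldots,s\}$ onto the set of components of $\widehat{\mathbf{F}}(j_1,\ldots,j_s)$ is surjective, which immediately gives the bound $|\widehat{\mathbf{F}}(j_1,\ldots,j_s)|\le 2^s$. For the second assertion, I would just invoke Definition \ref{Def:Perfect-FP-Struc}: a codimension-$s$ face family is by definition perfect precisely when it has exactly $2^s$ components, so perfectness is equivalent to attaining the maximum given by the bound.

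There is essentially no obstacle here; the corollary is pure bookkeeping once Lemmas \ref{Lem:Order-Indep} and \ref{Lem:Component-0} are in hand. The only point where I would be slightly careful is in the wording, to make clear that the map $\Sigma\mapsto\mathbf{F}(j_1,\ldots,j_s)^{\Sigma}$ need not be injective (two different subsets might produce the same component), which is exactly why the inequality in general is not an equality, and why perfectness is the extra condition that forces injectivity and hence equality.
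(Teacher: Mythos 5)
Your proposal is correct and matches the paper's own argument: the paper derives the corollary from exactly the same two facts (Lemma \ref{Lem:Component-0} to reduce to irreducible sequences and Lemma \ref{Lem:Order-Indep} to show order-independence), concluding that every component is some $\mathbf{F}(j_1,\cdots,j_s)^{\Sigma}$ with $\Sigma\subset\{1,\cdots,s\}$, so there are at most $2^s$ of them. Your remark that the map $\Sigma\mapsto\mathbf{F}(j_1,\cdots,j_s)^{\Sigma}$ need not be injective is also exactly the point the paper makes immediately after the corollary.
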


  \begin{lem} \label{Lem:Second-Simple-2}
    For any $\Sigma_1,\Sigma_2 \subset \{1,\cdots, s\}$, let
   $\mathbf{F} (j'_1, \cdots, j'_s)\equiv\mathbf{F}(j_1, \cdots,j_s)^{\Sigma_1}$.
   Then $\mathbf{F}(j'_1, \cdots, j'_s)^{\Sigma_2} \equiv
   \mathbf{F} (j_1, \cdots, j_s)^{\Sigma_1\ominus \Sigma_2}$
   where $\Sigma_1\ominus \Sigma_2 =
   (\Sigma_1\backslash \Sigma_2) \cup (\Sigma_2\backslash \Sigma_1)$ is the symmetric
   difference of $\Sigma_1$ and $\Sigma_2$.
  \end{lem}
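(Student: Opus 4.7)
\textbf{Proof plan for Lemma~\ref{Lem:Second-Simple-2}.} The idea is to realize both sides as normal forms generated from $\mathbf{F}(j_1,\dots,j_s)$ by a single, concatenated index sequence, and then to reduce that sequence by Lemma~\ref{Lem:Component-0}. Concretely, I would first enumerate $\Sigma_1=\{i_1,\dots,i_r\}$ and $\Sigma_2=\{i'_1,\dots,i'_t\}$ in some order, and let $(k_1,\dots,k_r)$ be the derived sequence of $(i_1,\dots,i_r)$ from $(j_1,\dots,j_s)$, so that by definition $\mathbf{F}(j'_1,\dots,j'_s)\equiv\mathbf{F}_{k_1,\dots,k_r}(j_1,\dots,j_s)$.

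Next I would form the concatenated sequence $(i_1,\dots,i_r,i'_1,\dots,i'_t)$, whose derived sequence from $(j_1,\dots,j_s)$ has the form $(k_1,\dots,k_r,k'_1,\dots,k'_t)$. By Lemma~\ref{Lem:First-Simple} applied with $p=r$, the tail $(k'_1,\dots,k'_t)$ is precisely the derived sequence of $(i'_1,\dots,i'_t)$ from $(j'_1,\dots,j'_s)$, and therefore
\[
\mathbf{F}(j'_1,\dots,j'_s)^{\Sigma_2}\equiv\mathbf{F}_{k'_1,\dots,k'_t}(j'_1,\dots,j'_s)\equiv\mathbf{F}_{k_1,\dots,k_r,k'_1,\dots,k'_t}(j_1,\dots,j_s).
\]

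Finally, I would identify the right-hand side with $\mathbf{F}(j_1,\dots,j_s)^{\Sigma_1\ominus\Sigma_2}$. The concatenated index sequence $(i_1,\dots,i_r,i'_1,\dots,i'_t)$ contains each element of $\Sigma_1\cap\Sigma_2$ exactly twice (once from each $\Sigma_\alpha$, since within each $\Sigma_\alpha$ the entries are distinct) and each element of $\Sigma_1\ominus\Sigma_2$ exactly once. Applying the reduction procedure of Lemma~\ref{Lem:Component-0} (which in turn uses Lemma~\ref{Lem:Order-Indep} to freely permute and~\eqref{Equ:Cube-Commute-6} to cancel adjacent repeats) removes exactly the paired entries from $\Sigma_1\cap\Sigma_2$, leaving an irreducible sequence whose underlying set is $\Sigma_1\ominus\Sigma_2$. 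The resulting normal form is, by definition, $\mathbf{F}(j_1,\dots,j_s)^{\Sigma_1\ominus\Sigma_2}$, which gives the desired strong equality.

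The proof is essentially bookkeeping, and the only mild subtlety is ensuring that the reduction step does not tacitly depend on the chosen orderings of $\Sigma_1$ and $\Sigma_2$; this is exactly what Lemma~\ref{Lem:Order-Indep} guarantees, so no new computation is required. The main conceptual point is the recognition that concatenation of index sequences corresponds to composition of generating operations, while pairwise cancellation via~\eqref{Equ:Cube-Commute-6} realizes the symmetric difference of the underlying sets.
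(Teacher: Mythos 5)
Your proposal is correct and follows essentially the same route as the paper: both realize $\mathbf{F}(j'_1,\dots,j'_s)^{\Sigma_2}$ as the normal form generated from $\mathbf{F}(j_1,\dots,j_s)$ by the concatenation of enumerations of $\Sigma_1$ and $\Sigma_2$ (via Lemma~\ref{Lem:First-Simple}), and then cancel the doubled entries of $\Sigma_1\cap\Sigma_2$ to leave $\Sigma_1\ominus\Sigma_2$. The only cosmetic difference is that the paper chooses the enumerations so that the common part appears as a palindromic middle block and cancels it by the argument of Lemma~\ref{Lem:Second-Simple}, whereas you invoke the general reduction of Lemma~\ref{Lem:Component-0} together with Lemma~\ref{Lem:Order-Indep}; both are valid.
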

  \begin{proof}
   Without loss of generality, we can assume $\Sigma_1=\{ i_1,\cdots, i_p, \cdots, i_r \}$
   and $\Sigma_2=\{ i_p, \cdots, i_r, \cdots, i_m \}$ where $1\leq p \leq r \leq m \leq s$. So
    $\Sigma_1\cap \Sigma_2 = \{ i_p,\cdots, i_r\}$. By the preceding
    lemma,
    $\mathbf{F}(j'_1, \cdots, j'_s)^{\Sigma_2}$ is strongly equal to
    the normal form generated by a sequence
   $(i_1,\cdots, i_p,\cdots, i_{r-1}, i_r, i_r, i_{r-1},\cdots, i_p, i_{r+1} \cdots, i_m)$
  from $\mathbf{F}(j_1,\cdots, j_s)$. Then using the same argument as in
  Lemma~\ref{Lem:Second-Simple}, we can show that the
   middle part $(i_p,\cdots, i_{r-1}, i_r, i_r, i_{r-1},\cdots, i_p)$
   in the above sequence can be reduced to $(\varnothing)$. So
  $\mathbf{F}(j'_1, \cdots, j'_s)^{\Sigma_2}$ is strongly equal to
  the normal form generated by a shorter sequence
   $(i_1,\cdots, i_{p-1}, i_{r+1} \cdots, i_m)$
  from $\mathbf{F}(j_1,\cdots, j_s)$. Notice that
  $ \{ i_1,\cdots, i_{p-1}, i_{r+1} \cdots, i_m \}$ is exactly $\Sigma_1\ominus
  \Sigma_2$, so we are done.
  \end{proof}
   \nn

  In Lemma~\ref{Lem:Second-Simple-2},
   if we denote $\mathbf{F}(j'_1, \cdots, j'_s)^{\Sigma_2}$ by
    $\left(\mathbf{F}(j_1, \cdots, j_s)^{\Sigma_1} \right)^{\Sigma_2}$,  we can rewrite
   Lemma~\ref{Lem:Second-Simple-2} as: for any $\Sigma_1,\Sigma_2 \subset \{1,\cdots, s\}$
   \begin{equation} \label{Equ:Second-Simple-2}
    \left(\mathbf{F}(j_1, \cdots, j_s)^{\Sigma_1} \right)^{\Sigma_2}
   \equiv \mathbf{F} (j_1, \cdots, j_s)^{\Sigma_1\ominus \Sigma_2} .
   \end{equation}
   Similarly, we can rewrite
    Lemma~\ref{Lem:Second-Simple} as: for any $\Sigma \subset \{1,\cdots, s\}$,
      \begin{equation} \label{Equ:Second-Simple}
       \left(\mathbf{F}(j_1, \cdots,j_s)^{\Sigma} \right)^{\Sigma} \equiv \mathbf{F}(j_1, \cdots,j_s).
       \end{equation}
    \nn

   By the above discussion,
    if there exist two different subsets of $\{ 1, \cdots, s \}$
     which generate the same component in $\widehat{\mathbf{F}}(j_1, \cdots, j_s)$,
      the number of components
  in $\widehat{\mathbf{F}}(j_1, \cdots, j_s)$ will be strictly less than $2^s$,
  i.e. the face family $\widehat{\mathbf{F}}(j_1, \cdots, j_s)$ is not perfect.
  So $\widehat{\mathbf{F}}(j_1, \cdots, j_s)$ is perfect if and only
  if different subsets of $\{ 1, \cdots, s \}$ always generate
   different components in $\widehat{\mathbf{F}}(j_1, \cdots, j_s)$.
  \nn

  Next, let us see what is the meaning of ``strongness'' for
  a regular facets-pairing structure $\mathcal{F}$ on $\mathcal{C}^n$.
  For any face $f=\mathbf{F}(j_1,\cdots ,j_s)$ of $\mathcal{C}^n$, let
   $f'= \tau_{\mathbf{F}(j_i)} (f)$, $1\leq i \leq s$. By our notations,
   $f'=\widetilde{\tau}_{j_i} (\mathbf{F}(j_1,\cdots ,j_s)) =
      \mathbf{F}(\widetilde{\sigma}_{j_i}(j_1), \cdots, \widetilde{\sigma}_{j_i}(j_s))$.
   So the map $\Psi^{f}_{\mathbf{F}(j_i)} : \Xi(f) \rightarrow \Xi(f')$
     is given by
     $\widetilde{\sigma}_{j_i}: \{ j_1,\cdots, j_s \} \rightarrow
     \{ \widetilde{\sigma}_{j_i}(j_1), \cdots, \widetilde{\sigma}_{j_i}(j_s) \}$.
     Similarly, the map
     $(\Psi^{f}_{\mathbf{F}(j_i)})^{\perp} : \Xi^{\perp}(f) \rightarrow \Xi^{\perp}(f')$
      is given by:
      $$\widetilde{\sigma}_{j_i}: [\pm n] \backslash \{ \pm j_1,\cdots, \pm j_s \} \rightarrow
       [\pm n] \backslash \{ \pm \widetilde{\sigma}_{j_i}(j_1),
        \cdots, \pm \widetilde{\sigma}_{j_i}(j_s) \}.$$
     Note that $(\Psi^{f}_{\mathbf{F}(j_i)})^{\perp}$ describes the map
       between the face lattices of $f$ and $f'$ induced by $\tau_{\mathbf{F}(j_i)}$.
       Since we assume each $\tau_{j}=\tau_{\mathbf{F}(j)}$ is an isometry with respect
       to the induced Euclidean metric on the facets of $\mathcal{C}^n$,
       so the map $\tau_{\mathbf{F}(j_i)}|_f : f \rightarrow f'$ is
       completely determined by
       $(\Psi^{f}_{\mathbf{F}(j_i)})^{\perp}$, hence by
       $\widetilde{\sigma}_{j_i}$.\nn

   In addition, by our discussion above,
   for any valid form $\tau_{\mathbf{F}(k_m)}\circ\cdots\circ
   \tau_{\mathbf{F}(k_1)}(f)$, there exists a sequence
   $(i_1,\cdots, i_m)$ with $1\leq i_1,\cdots, i_m \leq s$ so that
    $(k_1,\cdots,k_m)$ is the derived sequence of $(i_1,\cdots, i_m)$ from
  $(j_1,\cdots ,j_s)$. By our notation,
   $$\tau_{\mathbf{F}(k_m)}\circ\cdots\circ
   \tau_{\mathbf{F}(k_1)}(f) = \mathbf{F}_{k_1,\cdots, k_m}(j_1,\cdots,
   j_s).$$
   Suppose $(k'_1,\cdots, k'_r)$ is the derived sequence of another
   sequence $(i'_1,\cdots, i'_r)$ from $(j_1,\cdots, j_s)$.
   Then we claim that: $\mathcal{F}$ is strong if and only
   if whenever
   $\mathbf{F}_{k_1,\cdots, k_m}(j_1,\cdots, j_s)
   = \mathbf{F}_{k'_1,\cdots, k'_r}(j_1,\cdots, j_s)$, we must have
   \n
    \begin{enumerate}
      \item[(i)] $\mathbf{F}_{k_1,\cdots, k_m}(j_1,\cdots, j_s)
      \equiv \mathbf{F}_{k'_1,\cdots, k'_r}(j_1,\cdots, j_s)$, and
      \n

      \item[(ii)] $\widetilde{\sigma}_{k_m} \circ\cdots\circ\widetilde{\sigma}_{k_1} =
        \widetilde{\sigma}_{k'_r} \circ\cdots\circ
        \widetilde{\sigma}_{k'_1}$ as a permutation on $[\pm n]$.
    \end{enumerate}
    \n

    By our discussion above, the necessity is easy to see.
    To prove the sufficiency, we only need to show the
    condition (i) and (ii) can guarantee that  $\tau_{\mathbf{F}(k_m)}\circ\cdots\circ
   \tau_{\mathbf{F}(k_1)}$ and $\tau_{\mathbf{F}(k'_r)}\circ\cdots\circ
   \tau_{\mathbf{F}(k'_1)}$ agree on $f$. Indeed, let
   $\widetilde{f}=\mathbf{F}_{k_1,\cdots, k_m}(j_1,\cdots, j_s)$.
    Then from (i) and (ii), we can conclude that
   $\tau_{\mathbf{F}(k_m)}\circ\cdots\circ
   \tau_{\mathbf{F}(k_1)}$ and $\tau_{\mathbf{F}(k'_r)}\circ\cdots\circ
   \tau_{\mathbf{F}(k'_1)}$ induce the same map from the face lattice of
   $f$ to that of $\widetilde{f}$. Since they are both isometries from $f$ to $\widetilde{f}$,
   so they must agree on any point of $f$. \nn

    Using the above interpretations of ``strongness'' and
    ``perfectness'' for regular facets-pairing structures on $\mathcal{C}^n$,
    we can prove the following theorem easily.\nn

   \begin{thm} \label{thm:Perfect-Strong}
     If a regular facets-pairing
    structure $\mathcal{F}$
     on $\mathcal{C}^n$ is perfect, then $\mathcal{F}$ must be strong.
   \end{thm}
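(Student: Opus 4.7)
The plan is to use the algebraic reformulation of strongness worked out just before the theorem. Fixing a normal form $\mathbf{F}(j_1,\cdots,j_s)$ of a face $f$, it suffices to show that whenever two derived sequences $(k_1,\cdots,k_m)$ and $(k'_1,\cdots,k'_r)$ satisfy
\[
\mathbf{F}_{k_1,\cdots,k_m}(j_1,\cdots,j_s) = \mathbf{F}_{k'_1,\cdots,k'_r}(j_1,\cdots,j_s)
\]
as a face, both (i) the strong equality $\equiv$ holds and (ii) $\widetilde{\sigma}_{k_m}\circ\cdots\circ\widetilde{\sigma}_{k_1}$ equals $\widetilde{\sigma}_{k'_r}\circ\cdots\circ\widetilde{\sigma}_{k'_1}$ as a signed permutation on $[\pm n]$.

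First I would reduce both sides to irreducible sequences. Lemma~\ref{Lem:Component-0} provides, for each side, an irreducible subsequence of the original index sequence whose derived sequence generates the same normal form from $\mathbf{F}(j_1,\cdots,j_s)$. A careful look at that proof shows each reduction step also preserves the composition permutation on $[\pm n]$: the permutation-of-entries step is handled by Lemma~\ref{Lem:Order-Indep}(b), and the cancellation of an adjacent pair $\widetilde{\sigma}_{d_2}\circ\widetilde{\sigma}_{d_1}$ (with $d_2=\omega(d_1)$) is governed by \eqref{Equ:Cube-Commute-6}, which collapses the pair to $id_{[\pm n]}$. Hence the problem reduces to the case of two irreducible sequences, corresponding to subsets $\Sigma_1, \Sigma_2 \subset \{1,\cdots,s\}$, whose associated normal forms $\mathbf{F}(j_1,\cdots,j_s)^{\Sigma_1}$ and $\mathbf{F}(j_1,\cdots,j_s)^{\Sigma_2}$ are equal as faces.

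Next, perfectness enters. From the discussion immediately following Corollary~\ref{Cor:Number-Components}, perfectness of $\mathcal{F}$ is equivalent to the statement that distinct subsets of $\{1,\cdots,s\}$ always generate distinct components of $\widehat{\mathbf{F}}(j_1,\cdots,j_s)$. Combined with the face equality above, this forces $\Sigma_1 = \Sigma_2$. Once we have $\Sigma_1 = \Sigma_2$, condition (i) follows because the notation $\mathbf{F}(j_1,\cdots,j_s)^{\Sigma}$ is well-defined up to $\equiv$ by Lemma~\ref{Lem:Order-Indep}(a), and condition (ii) follows because, again by Lemma~\ref{Lem:Order-Indep}(b), the composition permutation generated depends only on the underlying set $\Sigma$ and not on its enumeration.

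The main obstacle is the bookkeeping in the reduction step: one must verify that passing from a reducible sequence to its irreducible reduct preserves not merely the normal form (which is exactly the content of Lemma~\ref{Lem:Component-0}) but also the composition $\widetilde{\sigma}_{k_m}\circ\cdots\circ\widetilde{\sigma}_{k_1}$. This strengthening is not explicitly recorded in Lemma~\ref{Lem:Component-0}, but since its proof uses only Lemma~\ref{Lem:Order-Indep} and the identity \eqref{Equ:Cube-Commute-6}, both of which preserve the composition permutation, it comes for free. After this, the role of perfectness is essentially a one-line input and the conclusion drops out.
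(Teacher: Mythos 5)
Your proposal is correct and follows essentially the same route as the paper: reduce to irreducible sequences via Lemma~\ref{Lem:Component-0}, use perfectness to force the two index sets to coincide, and finish with Lemma~\ref{Lem:Order-Indep}. Your explicit check that the reduction step preserves the composition permutation (not just the normal form) is a point the paper's proof passes over silently, and it is a worthwhile clarification, but it does not change the argument.
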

   \begin{proof}
   Suppose $(k_1,\cdots, k_m)$ and $(k'_1,\cdots, k'_r)$
   are the derived sequences of two different sequences $(i_1,\cdots, i_m)$ and $(i'_1,\cdots, i'_{r})$ from
   $(j_1,\cdots, j_s)$ respectively with
    $\mathbf{F}_{k_1,\cdots, k_m}(j_1,\cdots, j_s)
   = \mathbf{F}_{k'_1,\cdots, k'_r}(j_1,\cdots, j_s)$. By
   Lemma~\eqref{Lem:Component-0}, we can assume that
   $(i_1,\cdots, i_m)$ and $(i'_1,\cdots, i'_{r})$ are both
   irreducible.\n

    Since we assume $\mathcal{F}$ is perfect,
     the components in the face family $\widehat{\mathbf{F}}(j_1,\cdots,j_s)$
     are one-to-one correspondent with all the subsets of $\{ 1,\cdots, s \}$.
     So we must have
      $\{ i_1,\cdots, i_m \} = \{ i'_1,\cdots,  i'_r \}$.
      This implies $m=r$ and
     $(i_1,\cdots, i_m)$ is just a permutation of the entries in $(i'_1,\cdots, i'_r)$.
     Then by Lemma~\ref{Lem:Order-Indep} and
      the above interpretation of the strongness of $\mathcal{F}$,
       we are done.
   \end{proof}
     \nn

  \noindent \textbf{Question 3:}
    when a strong regular facets-pairing structure $\mathcal{F}$
   on $\mathcal{C}^n$ is perfect?\nn

    To answer this question,
     we need to examine the properties of the face families in
   a strong regular facets-pairing structure $\mathcal{F}$ on
    $\mathcal{C}^n$ more carefully. For convenience,
    for any face $f$ of
    $\mathcal{C}^n$, let $| \widehat{f} |$ be the number of components in
    the face family $\widehat{f}$ of $\mathcal{F}$. \nn

     \begin{lem}\label{Lem:Component-1}
  In a strong regular facets-pairing
    structure $\mathcal{F}$ on $\mathcal{C}^n$, for any
    two faces $e\subset f$ in $\mathcal{C}^n$ with
    $\dim(f)-\dim(e)=1$,  then either
     $|\widehat{e}| = |\widehat{f}|$ or  $|\widehat{e}| = 2 |\widehat{f}|$.
  \end{lem}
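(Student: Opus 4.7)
The plan is to reduce the lemma to a comparison of two elementary abelian $2$-groups attached to $e$ and $f$. I would begin by writing $f=\mathbf{F}(j_1,\dots,j_s)$ and $e=\mathbf{F}(j_1,\dots,j_{s+1})$ for some $j_{s+1}\in[\pm n]$ with $|j_{s+1}|\notin\{|j_1|,\dots,|j_s|\}$. For each $\Sigma\subset\{1,\dots,s\}$, I will denote by $\phi_\Sigma\in\mathfrak{S}^{\pm}_n$ the composite $\widetilde{\sigma}_{k_m}\circ\cdots\circ\widetilde{\sigma}_{k_1}$ arising from the derived sequence of any enumeration of $\Sigma$ from $(j_1,\dots,j_s)$; Lemma~\ref{Lem:Order-Indep} guarantees that $\phi_\Sigma$ depends only on $\Sigma$. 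I define $\phi_{\Sigma'}$ analogously for $\Sigma'\subset\{1,\dots,s+1\}$ from $(j_1,\dots,j_{s+1})$.

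Next, I will introduce the ``stabiliser'' subsets $K_f=\{\Sigma\subset\{1,\dots,s\}:\mathbf{F}(j_1,\dots,j_s)^\Sigma=\mathbf{F}(j_1,\dots,j_s)\}$ and $K_e$ analogously. By~\eqref{Equ:Second-Simple-2} and~\eqref{Equ:Second-Simple}, these are subgroups of $\mathcal{P}(\{1,\dots,s\})\cong(\mathbb{Z}/2)^s$ and $\mathcal{P}(\{1,\dots,s+1\})\cong(\mathbb{Z}/2)^{s+1}$ respectively, under symmetric difference $\ominus$. Because two subsets generate the same component of $\widehat{f}$ exactly when they differ by an element of $K_f$, the components of $\widehat{f}$ are in bijection with the coset space, so $|\widehat{f}|=2^s/|K_f|$, and likewise $|\widehat{e}|=2^{s+1}/|K_e|$.

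Strongness will then enter in one decisive step. By the characterisation (i)--(ii) of strongness developed just before the theorem, $\Sigma$ lies in $K_f$ (resp.\ $K_e$) if and only if $\phi_\Sigma=\mathrm{id}_{[\pm n]}$. Consequently, under the natural inclusion $\mathcal{P}(\{1,\dots,s\})\hookrightarrow\mathcal{P}(\{1,\dots,s+1\})$, I will obtain the identification $K_f=K_e\cap\mathcal{P}(\{1,\dots,s\})$. This forces the induced map $K_e/K_f\hookrightarrow\mathcal{P}(\{1,\dots,s+1\})/\mathcal{P}(\{1,\dots,s\})\cong\mathbb{Z}/2$ to be injective, so $[K_e:K_f]\in\{1,2\}$. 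Combining everything gives
\[
\frac{|\widehat{e}|}{|\widehat{f}|}\;=\;\frac{2\,|K_f|}{|K_e|}\;=\;\frac{2}{[K_e:K_f]}\;\in\;\{1,2\},
\]
which is exactly the asserted dichotomy.

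The step I expect to be the main obstacle is the implication $\Sigma\in K_e\Rightarrow\phi_\Sigma=\mathrm{id}$ on the \emph{full} set $[\pm n]$, rather than merely on $\{j_1,\dots,j_{s+1}\}$: only this extra information lets me read off the equality $\mathbf{F}(j_1,\dots,j_s)^\Sigma=\mathbf{F}(j_1,\dots,j_s)$ from its $(s+1)$-variable counterpart, and hence identify $K_f$ with $K_e\cap\mathcal{P}(\{1,\dots,s\})$. This is precisely what condition (ii) of strongness supplies; without it, $K_f$ could be strictly smaller than $K_e\cap\mathcal{P}(\{1,\dots,s\})$, and the index $[K_e:K_f]$ could exceed~$2$, so the dichotomy would fail.
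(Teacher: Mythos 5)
Your proof is correct, and it is organized differently from the paper's. The paper splits the components of $\widehat{e}$ into type-1 ($\mathbf{F}(j_1,\dots,j_{s+1})^{\Sigma}$ with $\Sigma\subset\{1,\dots,s\}$) and type-2 ($\mathbf{F}(j_1,\dots,j_{s+1})^{\Sigma\cup\{s+1\}}$), uses strongness to show that $\Sigma\mapsto\Sigma\cup\{s+1\}$ and $\Sigma\mapsto\Sigma$ induce well-defined bijections (type-1 $\leftrightarrow$ type-2, and $\widehat{f}\leftrightarrow$ type-1), and then argues that the type-1 and type-2 sets are either disjoint or identical; your dichotomy $[K_e:K_f]\in\{1,2\}$ is exactly the paper's ``disjoint or equal'' alternative in disguise. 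What your version buys is conceptual economy: once the components of $\widehat{f}$ are identified with cosets of the stabiliser $K_f\leq(\mathcal{P}(\{1,\dots,s\}),\ominus)$, the lemma is a one-line index computation, and Theorem~\ref{thm:Component-1} ($|\widehat{f}|$ is a power of $2$) and Corollary~\ref{Cor:Number-Components-2} fall out for free from $|\widehat{f}|=2^{s-\dim_{\mathbb{F}_2}K_f}$, rather than requiring the inductive argument the paper runs. Two small points to watch in the write-up. First, strongness is needed not only in the ``decisive step'' you flag but already to make $K_f$ a subgroup and the components cosets: to pass from $\mathbf{F}(j_1,\dots,j_s)^{\Sigma_1}=\mathbf{F}(j_1,\dots,j_s)$ to the \emph{strong} equality that lets you apply~\eqref{Equ:Second-Simple-2}, you must invoke condition (i) of the strongness characterisation, not just the two displayed identities. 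Second, in your closing paragraph the roles of the two inclusions are slightly transposed: the inclusion $K_e\cap\mathcal{P}(\{1,\dots,s\})\subset K_f$ already follows from condition (i) (ordered equality on $\{j_1,\dots,j_{s+1}\}$ restricts to the first $s$ entries), whereas it is the reverse inclusion $K_f\subset K_e$ that genuinely requires condition (ii), since one must know $\phi_\Sigma$ fixes $j_{s+1}$, which does not appear in the normal form of $f$. Your uniform characterisation $K_f=\{\Sigma:\phi_\Sigma=\mathrm{id}_{[\pm n]}\}$ (and likewise for $K_e$) handles both at once, so the argument stands.
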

  \begin{proof}
      Suppose $f=\mathbf{F}(j_1,\cdots, j_s)$ and $e= \mathbf{F}(j_1,\cdots, j_s, j_{s+1})$.
      By Lemma~\ref{Lem:Component-0},
       each component of $\widehat{e}$ must be one of the following two types:
      \begin{align*}
         \text{type-1:} \ \ &\mathbf{F}(j_1,\cdots, j_s, j_{s+1})^{\Sigma} ,\
          \forall\,\Sigma \subset \{ 1,\cdots, s\}. \\
         \text{type-2:} \ \ &\mathbf{F}(j_1,\cdots, j_s, j_{s+1})^{\Sigma\cup \{ s+1\}},
           \ \forall\, \Sigma \subset \{ 1,\cdots, s\}.
       \end{align*}
        There is a
   natural map between the set of type-1 and type-2 components of $\widehat{e}$ as following.
   For any set $\Sigma \subset \{ 1,\cdots, s \}$, we define
   \begin{equation} \label{Equ:Correspondence}
     \mathbf{F}(j_1,\cdots, j_s, j_{s+1})^{\Sigma} \ \longrightarrow\
      \mathbf{F}(j_1,\cdots, j_s, j_{s+1})^{\Sigma\cup \{ s+1\}}.
   \end{equation}
    But we need to show this map is well-defined,
    i.e. if $\Sigma'$ is another subset of $\{ 1,\cdots, s\}$ with
    $ \mathbf{F}(j_1,\cdots, j_s,j_{s+1})^{\Sigma'} = \mathbf{F}(j_1,\cdots, j_s,j_{s+1})^{\Sigma}$,
    then we must have
  \begin{equation} \label{Equ:Well-Defined}
     \mathbf{F}(j_1,\cdots, j_s, j_{s+1})^{\Sigma'\cup \{ s+1\}}
    = \mathbf{F}(j_1,\cdots, j_s, j_{s+1})^{\Sigma\cup \{ s+1\}}.
    \end{equation}

    Indeed,
    $\mathbf{F}(j_1,\cdots, j_s, j_{s+1})^{\Sigma\cup \{ s+1\}}$ and
    $\mathbf{F}(j_1,\cdots, j_s, j_{s+1})^{\Sigma'\cup \{ s+1\}}$
    are generated by $\{ s+1\}$ from $\mathbf{F}(j_1,\cdots,
    j_s,j_{s+1})^{\Sigma}$ and $ \mathbf{F}(j_1,\cdots,
    j_s,j_{s+1})^{\Sigma'}$ respectively (see Lemma~\ref{Lem:First-Simple}).
   Since $\mathcal{F}$ is strong,
   $\mathbf{F}(j_1,\cdots, j_s,j_{s+1})^{\Sigma'} = \mathbf{F}(j_1,\cdots, j_s,j_{s+1})^{\Sigma}$
   will force  $ \mathbf{F}(j_1,\cdots, j_s,j_{s+1})^{\Sigma'}
   \equiv \mathbf{F}(j_1,\cdots, j_s,j_{s+1})^{\Sigma}$.
   So~\eqref{Equ:Well-Defined} follows from Lemma~\ref{Lem:First-Simple}.
   So the map in~\eqref{Equ:Correspondence} is well-defined and it is
   obviously surjective. To show it is injective, we
   notice that  $\mathbf{F}(j_1,\cdots, j_s, j_{s+1})^{\Sigma}$ is
    generated by $\{ s+1\}$ from
    $\mathbf{F}(j_1,\cdots, j_s,j_{s+1})^{\Sigma \cup \{ s+1\}}$
     (see Lemma~\ref{Lem:Second-Simple-2} and~\eqref{Equ:Second-Simple-2}).
    Then by a similar argument as above, we can show:
    if $ \mathbf{F}(j_1,\cdots, j_s,j_{s+1})^{\Sigma'\cup \{ s+1\}} =
    \mathbf{F}(j_1,\cdots, j_s,j_{s+1})^{\Sigma \cup \{ s+1\}}$,
    we must have $ \mathbf{F}(j_1,\cdots, j_s,j_{s+1})^{\Sigma'} =
    \mathbf{F}(j_1,\cdots, j_s,j_{s+1})^{\Sigma}$. So
    the map defined in~\eqref{Equ:Correspondence} is injective, hence a bijection.
   So we can conclude that there are equal number of type-1 and type-2 components in
   $\widehat{e}$.    \n

   Similarly, we can define a natural
    map from the set of components in $\widehat{f}$ to the set of type-1 components in
    $\widehat{e}$ by: for any set $\Sigma \subset \{ 1,\cdots, s \}$
    \begin{equation} \label{Equ:Correspondence-3}
     \mathbf{F}(j_1,\cdots, j_s)^{\Sigma}    \ \longrightarrow\ \mathbf{F}(j_1,\cdots, j_s, j_{s+1})^{\Sigma}
     \end{equation}
      We can show that
       the map in~\eqref{Equ:Correspondence-3}
        is a well-defined bijection by a similar argument as we do for the
        map in~\eqref{Equ:Correspondence}.
       Therefore, the number of type-1 components (hence type-2 components)
       in $\widehat{e}$ exactly equals $|\widehat{f}|$.
       \n

       If the set of
        type-1 components and the set of type-2 components of $\widehat{e}$ are disjoint,
        then $|\widehat{e}| = 2 |\widehat{f}|$. Otherwise, there
        exist $\Sigma_1, \Sigma_2 \subset \{ 1,\cdots, s \}$ so that
         $
         \mathbf{F}(j_1,\cdots, j_s, j_{s+1})^{\Sigma_1}
          = \mathbf{F}(j_1,\cdots, j_s, j_{s+1})^{\Sigma_2\cup \{ s+1\}}
          $. Then since $\mathcal{F}$ is strong, we must have:
        \begin{equation} \label{Equ:proof-1}
          \mathbf{F}(j_1,\cdots, j_s, j_{s+1})^{\Sigma_1}
          \equiv \mathbf{F}(j_1,\cdots, j_s, j_{s+1})^{\Sigma_2\cup \{s+1\}}.
         \end{equation} \n

          In this case, we claim that the set of type-1 components and
             type-2 components in $\widehat{e}$ are the same set.
      So $|\widehat{e}| = |\widehat{f}|$. Indeed by Lemma~\ref{Lem:Second-Simple-2},
        the~\eqref{Equ:proof-1} implies that
        \begin{align*}
          \mathbf{F}(j_1,\cdots, j_s, j_{s+1})  \equiv
        \left(\mathbf{F}(j_1,\cdots, j_s, j_{s+1})^{\Sigma_1} \right)^{\Sigma_1}
          & \equiv \mathbf{F}(j_1,\cdots, j_s, j_{s+1})^{(\Sigma_2\cup \{s+1\}) \ominus
          \Sigma_1} \\
          &\equiv \mathbf{F}(j_1,\cdots, j_s, j_{s+1})^{(\Sigma_1 \ominus \Sigma_2 )\cup \{s+1\}}
         \end{align*}

         Then for any set $\Sigma \subset \{ 1,\cdots, s \}$, we have
          $$ \mathbf{F}(j_1,\cdots, j_s, j_{s+1})^{\Sigma}
          \equiv \mathbf{F}(j_1,\cdots, j_s, j_{s+1})^{((\Sigma_1 \ominus \Sigma_2 ) \ominus \Sigma)
            \cup \{s+1\}}. $$
            So any type-1 component in $\widehat{e}$ is also type-2.
        Similarly, we have:
         \begin{align*}
           \mathbf{F}(j_1,\cdots, j_s, j_{s+1})^{\Sigma \cup \{ s+1 \}}
             & \equiv \mathbf{F}(j_1,\cdots, j_s, j_{s+1})^{((\Sigma_1 \ominus \Sigma_2 )
            \cup \{s+1\})\ominus (\Sigma \cup \{ s+1 \})}\\
            &\equiv \mathbf{F}(j_1,\cdots, j_s, j_{s+1})^{(\Sigma_1 \ominus \Sigma_2 ) \ominus \Sigma}
         \end{align*}
        So any type-2 component in $\widehat{e}$ is also type-1.
        So the claim is proved.
      \end{proof}
      \nn

     Notice that for any facet $\mathbf{F}(j)$ of
      $\mathcal{C}^n$, $|\widehat{\mathbf{F}}(j)| = 1$ or $2$.
      So the above lemma implies the following. \nn

  \begin{thm} \label{thm:Component-1}
   If $\mathcal{F}$ is a strong regular facets-pairing structure on
   $\mathcal{C}^n$, then the number of components in any face family
   in $\mathcal{F}$ is a power of $2$.
 \end{thm}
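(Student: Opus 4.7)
The plan is to prove the theorem by induction on the codimension $s$ of the face, using Lemma~\ref{Lem:Component-1} as the engine.

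For the base case $s=1$, a face family $\widehat{\mathbf{F}}(j)$ contains either one or two facets (either $\mathbf{F}(j)$ is self-involutive, or $\widehat{\mathbf{F}}(j)=\{\mathbf{F}(j),\mathbf{F}(\omega(j))\}$), so $|\widehat{\mathbf{F}}(j)|\in\{1,2\}$, which is a power of $2$.

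For the inductive step, assume that every face family of codimension $s$ has a number of components that is a power of $2$, and let $e$ be an arbitrary codimension-$(s+1)$ face of $\mathcal{C}^n$. Writing $e=\mathbf{F}(j_1,\ldots,j_s,j_{s+1})$ in normal form, put $f:=\mathbf{F}(j_1,\ldots,j_s)$. Then $e\subset f$ and $\dim(f)-\dim(e)=1$, so Lemma~\ref{Lem:Component-1} applies and gives $|\widehat{e}|=|\widehat{f}|$ or $|\widehat{e}|=2|\widehat{f}|$. By the inductive hypothesis $|\widehat{f}|$ is a power of $2$, hence so is $|\widehat{e}|$.

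I do not expect any obstacle: the real content lies in Lemma~\ref{Lem:Component-1}, which reduces the question to a binary dichotomy between codimensions that differ by one. The only point one has to be a little careful about is the existence of a suitable ambient face $f$ with $e\subset f$ and $\dim(f)=\dim(e)+1$, but this is obvious for faces of the cube from the normal form description recalled in Section~\ref{Section3}. Thus the induction carries through and the proof is complete.
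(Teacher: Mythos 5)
Your proposal is correct and is essentially the argument the paper itself gives: the paper derives Theorem~\ref{thm:Component-1} from Lemma~\ref{Lem:Component-1} by noting $|\widehat{\mathbf{F}}(j)|\in\{1,2\}$ for facets and descending one codimension at a time, which is precisely your induction. No issues.
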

 \nn

  \begin{rem}
   If we do not assume $\mathcal{F}$ is strong in the statement of Theorem~\ref{thm:Component-1},
   it is not clear whether the conclusion in Theorem~\ref{thm:Component-1} still holds or not.
   So it is interesting to find an example of
   regular facets-pairing structure
   on $\mathcal{C}^n$ which has a face family $\widehat{f}$ with
   $|\widehat{f}|$ not a power of $2$.
 \end{rem}
  \n

  In addition, Lemma~\ref{Lem:Component-1} implies the
  following.

  \begin{cor}\label{Cor:Number-Components-2}
   If $\mathcal{F}$ is a strong regular facets-pairing structure on
   $\mathcal{C}^n$, for any faces $e\subset f$ in $\mathcal{C}^n$,  we always have
     $|\widehat{e}| \leq 2^{\dim(f)-\dim(e)} |\widehat{f}|$.
  \end{cor}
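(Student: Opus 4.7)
The plan is to reduce the general codimension inequality to the codimension-one case treated in Lemma~\ref{Lem:Component-1} via an induction on $d := \dim(f)-\dim(e)$. The base case $d=0$ is trivial (both sides equal $|\widehat{f}|$), and the case $d=1$ is exactly the statement of Lemma~\ref{Lem:Component-1}, which gives $|\widehat{e}| \in \{|\widehat{f}|, 2|\widehat{f}|\}$, in either case bounded by $2|\widehat{f}|$.

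For the inductive step, suppose $\dim(f)-\dim(e) = d \geq 2$. Write $f = \mathbf{F}(j_1,\dots,j_s)$ and $e = \mathbf{F}(j_1,\dots,j_s,j_{s+1},\dots,j_{s+d})$ in normal form (this is possible because $\mathcal{C}^n$ is a cube and every face of $e$-containing-type is obtained by successively intersecting with further facets). Consider the intermediate face
\[
e' := \mathbf{F}(j_1,\dots,j_s,j_{s+1},\dots,j_{s+d-1}),
\]
so that $e \subset e' \subset f$ with $\dim(e') - \dim(e) = 1$ and $\dim(f) - \dim(e') = d-1$. By Lemma~\ref{Lem:Component-1} applied to the pair $e \subset e'$, we have $|\widehat{e}| \leq 2\,|\widehat{e'}|$. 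By the induction hypothesis applied to $e' \subset f$, we have $|\widehat{e'}| \leq 2^{d-1}|\widehat{f}|$. Combining the two inequalities yields
\[
|\widehat{e}| \;\leq\; 2\,|\widehat{e'}| \;\leq\; 2 \cdot 2^{d-1}\,|\widehat{f}| \;=\; 2^{d}\,|\widehat{f}|,
\]
which is the desired bound.

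The argument is routine once Lemma~\ref{Lem:Component-1} is in hand; I do not anticipate any real obstacle. The only mild subtlety is ensuring that a chain of faces $e = e_0 \subset e_1 \subset \cdots \subset e_d = f$ with consecutive codimension jumps of one truly exists, but on the cube this is obvious from the normal-form description: one simply drops the $j_{s+d}$-th constraint, then the $j_{s+d-1}$-th, and so on. Strongness of $\mathcal{F}$ enters only through its use inside Lemma~\ref{Lem:Component-1}; the corollary itself requires no further appeal to strongness beyond that.
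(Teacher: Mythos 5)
Your proof is correct and follows exactly the route the paper intends: the paper derives this corollary as an immediate consequence of Lemma~\ref{Lem:Component-1} by iterating it along a chain of faces with codimension jumps of one, which is precisely your induction. The existence of such a chain via the normal-form description on the cube is indeed the only (trivial) point to check, and you handle it correctly.
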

  \n

  By the discussion above, we can give an answer to the Question 3 as following.\nn

    \begin{thm} \label{thm:Main-3}
       A strong regular facets-pairing
    structure $\mathcal{F}$
     on $\mathcal{C}^n$ is perfect if and only if
     all the $2^n$ vertices of $\mathcal{C}^n$ form
     a unique $0$-dimensional face family.
    \end{thm}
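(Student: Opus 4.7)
The plan is to prove the two directions separately; both follow from the combinatorial bounds on $|\widehat{f}|$ already established in this section. The forward direction is immediate from the definition of perfect. If $\mathcal{F}$ is perfect, then by Definition~\ref{Def:Perfect-FP-Struc} every face family of a codimension-$l$ face consists of exactly $2^l$ components. Taking $l = n$ (the codimension of a vertex) gives a $0$-dimensional face family with $2^n$ components. Since $\mathcal{C}^n$ has exactly $2^n$ vertices, this face family must contain every vertex, so all vertices form a single $0$-dimensional face family.

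For the reverse direction, assume all $2^n$ vertices of $\mathcal{C}^n$ form a unique $0$-dimensional face family $\widehat{v}$, so $|\widehat{v}| = 2^n$. I would like to show that for every proper face $f$ of $\mathcal{C}^n$ of codimension $s$, we have $|\widehat{f}| = 2^s$. The strategy is to sandwich $|\widehat{f}|$ between its general upper bound and a lower bound forced by the hypothesis on vertices.

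First, Corollary~\ref{Cor:Number-Components} gives the upper bound $|\widehat{f}| \leq 2^s$ (this does not even use strongness). Second, since $\mathcal{F}$ is strong, pick any vertex $v$ of $\mathcal{C}^n$ with $v \subset f$; then Corollary~\ref{Cor:Number-Components-2} applied to $e = v \subset f$ gives
\[
  |\widehat{v}| \;\leq\; 2^{\dim(f)-\dim(v)} \, |\widehat{f}| \;=\; 2^{n-s}\, |\widehat{f}|.
\]
By hypothesis $|\widehat{v}| = 2^n$, so $|\widehat{f}| \geq 2^s$. Combining the two bounds yields $|\widehat{f}| = 2^s$, which is exactly the definition of a perfect face family. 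Since this works for every proper face of $\mathcal{C}^n$, $\mathcal{F}$ is perfect.

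I do not expect a serious obstacle here; all the combinatorial preparation is already in place. The only subtlety is making sure the correct hypotheses are invoked: the upper bound $|\widehat{f}| \leq 2^s$ is automatic for any regular facets-pairing structure, but the lower bound derived from a single vertex relies crucially on strongness via Corollary~\ref{Cor:Number-Components-2} (which in turn came from Lemma~\ref{Lem:Component-1}). Once those are cited, the proof is a two-line sandwich argument.
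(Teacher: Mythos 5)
Your proof is correct and follows essentially the same route as the paper: the reverse direction is exactly the paper's sandwich argument, bounding $|\widehat{f}|$ above by Corollary~\ref{Cor:Number-Components} and below by applying Corollary~\ref{Cor:Number-Components-2} to a vertex contained in $f$. The forward direction, which the paper leaves implicit, is indeed immediate from Definition~\ref{Def:Perfect-FP-Struc}.
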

  \begin{proof}
      Suppose all the
       $2^n$ vertices of $\mathcal{C}^n$ form a unique $0$-dimensional
       face family of $\mathcal{F}$.
     For any face $f$ of $\mathcal{C}^n$, since $f$ contains some vertices of $\mathcal{C}^n$,
     then Corollary~\ref{Cor:Number-Components-2} implies that
      $  2^n \leq 2^{\dim(f)}\cdot |\widehat{f}| $.
     So $|\widehat{f}| \geq 2^{n-\dim(f)} $.
     On the other hand, Corollary~\ref{Cor:Number-Components} tells us that
     $|\widehat{f}| \leq 2^{n-\dim(f)}$.
      Hence $|\widehat{f}| = 2^{n-\dim(f)}$, which means that $\widehat{f}$
      is a perfect face family. So $\mathcal{F}$ is perfect.
  \end{proof}
   \nn

 \begin{rem} \label{Rem:Bun}
   In Theorem~\ref{thm:Main-3}, if we do not assume $\mathcal{F}$ is strong,
   the conclusion may not be true. For example, in Figure~\ref{p:Bun},
   we have a facets-pairing structure $\mathcal{F}$ on $\mathcal{C}^2$ whose facets are all
   self-involutive. The structure map of $\mathcal{F}$ on each
   edge of the square is just reflecting the edge about its
   midpoint. By our definition, $\mathcal{F}$ is not strong
   (see Remark~\ref{Rem:Strong-Self-Invol-Facets}).
   So although the $0$-dimensional face family $\mathcal{F}$
   is perfect (i.e. the four vertices of $\mathcal{C}^2$ form a unique $0$-dimensional face family
   of $\mathcal{F}$),
    we can not deduce that $\mathcal{F}$ is perfect. In addition,
    the seal space of $\mathcal{F}$ is homeomorphic to $S^2$ (see Figure~\ref{p:Bun}), where
    the seal map is just doubling up each edge of the square.
   \\
  \end{rem}
     \begin{figure}
          % Requires \usepackage{graphicx}
         \includegraphics[width=0.64\textwidth]{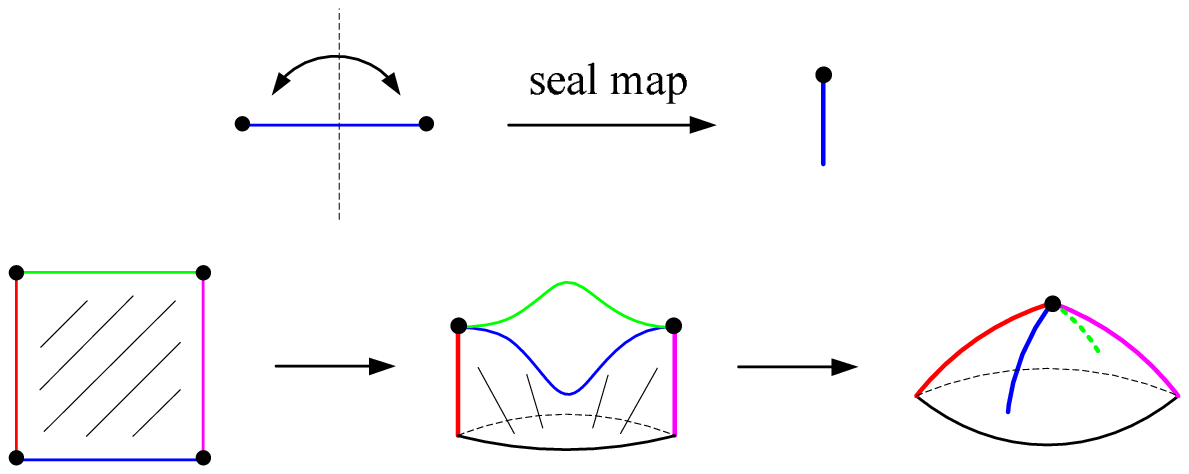}
          \caption{  }\label{p:Bun}
      \end{figure}

  \section{Seal Space of Perfect Regular Facets-Pairing Structure} \label{Section5}
  At the beginning of this paper, we ask what closed
  manifolds can be obtained from the seal spaces of facets-pairing
  structures on a cube (see Question 2). We have seen in Figure~\ref{p:three-Panel-Structures}
  that we can obtain torus, Klein bottle (admitting flat Riemannian
  metric), $\R P^2$ and $S^2$ from $\mathcal{C}^2$ in this way.
  In this section, we will show that
  the seal space of
  any perfect regular facets-pairing structure on a cube is always a closed
   manifold which admits a flat Riemannian metric. \nn

 Suppose $\mathcal{F} = \{ \omega, \tau_j \}_{j\in [\pm n]}$
  is a perfect regular facets-pairing structure on
 $\mathcal{C}^n$.  First, let us analyze the local picture of the
 seal map $\zeta_{\mathcal{F}}$ around
  any point on $\mathcal{C}^n$.
  By our discussion in the previous section,
 for any face $\mathbf{F}(j_1,\cdots, j_s)$ of $\mathcal{C}^n$, the
 face family
  $\widehat{\mathbf{F}}(j_1,\cdots, j_s)
   =\{ \mathbf{F}(j_1,\cdots, j_s)^{\Sigma}\, ; \, \Sigma \subset\{ 1,\cdots, s \}\}$.
  Since $\mathcal{F}$ is perfect,
   the components of $\widehat{\mathbf{F}}(j_1,\cdots, j_s)$ are one-to-one
  correspondent with all the subsets of $\{ 1,\cdots, s\}$.
  So the seal map $\zeta_{\mathcal{F}}$ will glue the $2^s$ different faces
  $\{ \mathbf{F}(j_1,\cdots, j_s)^{\Sigma}\, ; \, \Sigma \subset\{ 1,\cdots, s \} \}$
 into one face in the seal space $Q^n_{\mathcal{F}}$. Before we
 study the shape of $Q^n_{\mathcal{F}}$ around
 $\zeta_{\mathcal{F}}(\mathbf{F}(j_1,\cdots, j_s))$, let us first see
 a standard way of gluing $2^s$ right-angled cones in the Euclidean space $\R^n$.
 \nn

  For any $ 1\leq s\leq n$, let $\mathbb{L}^{n-s}$ be an $(n-s)$-dimensional linear subspace of $\R^n$
 defined by
 $\mathbb{L}^{n-s} = \{ (x_1,\cdots, x_n) \in \R^n \, | \ x_1=0, \cdots ,  x_s =0 \}$.
  So $ \mathbb{L}^{n-s} $ is the intersection of coordinate
  hyperplanes $H_1,\cdots, H_s$ in $\R^n$ where
  \[ H_i = \{ (x_1,\cdots, x_n) \in \R^n \, | \, x_i = 0 \}, \  i=1,\cdots, s. \]
  Note that the hyperplanes $H_1,\cdots, H_s$ divide $\R^n$ into $2^s$ connected
  domains
  $\{  \mathbb{P}_{\Sigma} \, ; \, \forall\ \text{set}\ \Sigma\subset \{ 1,\cdots, s\}
  \}$ where
   \[   \mathbb{P}_{\Sigma} := \{\, (x_1,\cdots, x_n) \in \R^n \ | \
      x_l > 0, \forall \, l\in \Sigma\ \text{and} \ x_{l'} < 0,
      \forall \, l'\in \{1,\cdots,s\} \backslash \Sigma \, \}
      \]
  Choose a small $\delta > 0$, let $H_i \times (-\delta, \delta)$
  be an open neighborhood of $H_i$ in $\R^n$. If we remove
  $\overset{s}{\underset{i=1}{\bigcup}} H_i \times (-\delta, \delta)$ from $\R^n$, we get
  $2^s$ components $\{ B_{\Sigma}\, ; \, \Sigma \subset \{ 1,\cdots, s\} \}$
  where each $B_{\Sigma}$ is a right-angled cone contained in $ \mathbb{P}_{\Sigma}$ (see Figure~\ref{p:Local-Gluing}).
  We denote all the facets of $B_{\Sigma}$ by $\{ B_{\Sigma}(i), 1\leq i \leq s \}$ where
  $B_{\Sigma}(i)$ is the facet of $B_{\Sigma}$ parallel to $H_i$.
  Moreover, for any nonempty set $\{ i_1,\cdots, i_q \} \subset \{1,\cdots, s\}$, we define
  $$ B_{\Sigma}(i_1,\cdots, i_q) := B_{\Sigma}(i_1) \cap \cdots \cap
  B_{\Sigma}(i_q). $$
   Then $B_{\Sigma}(i_1,\cdots, i_q)$ is a codimension-$q$ face of $B_{\Sigma}$
  which is parallel to $H_{i_1},\cdots, H_{i_q}$. Obviously, any
  codimension-$q$ face of $B_{\Sigma}$ can be written in this
  form.\n

      \begin{figure}
         \includegraphics[width=0.31\textwidth]{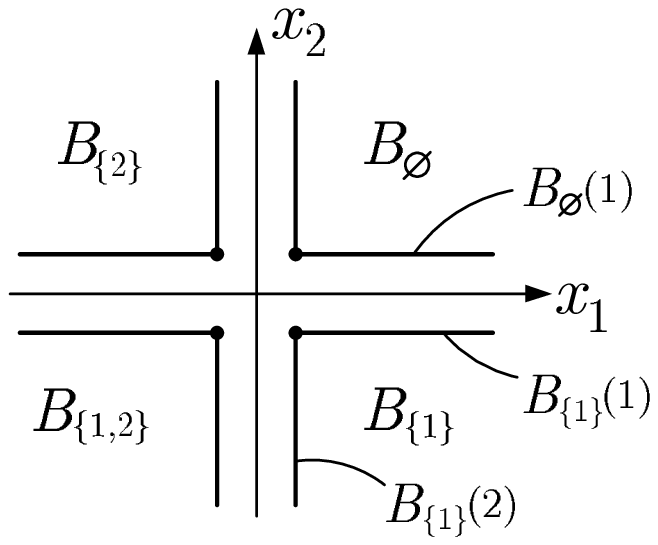}
          \caption{  }\label{p:Local-Gluing}
      \end{figure}

   For any $1\leq l \leq s$, let $\beta_l : \R^n \rightarrow \R^n$ denote the reflection of
   $\R^n$ about the hyperplane $H_l$. It is easy to see that
    $\beta_l$ will map $B_{\Sigma}$ to $B_{\Sigma \ominus \{ l\}}$
    and preserve their manifold with corners structures.
   Moreover, if $ l \in \{ i_1,\cdots, i_q \} \subset \{1,\cdots, s\}$,
    \begin{equation} \label{Equ:Compare-1}
     \beta_l(B_{\Sigma}(i_1,\cdots, i_q)) = B_{\Sigma \ominus \{ l \} }(i_1,\cdots, i_q).
    \end{equation}
    \n

   If we glue the $2^s$ right-angled cones $\{ B_{\Sigma}\, ; \, \Sigma \subset \{ 1,\cdots, s\} \}$
   together along their facets by: for any set $\Sigma \subset \{ 1,\cdots,
   s\}$ and  $\{ i_1,\cdots, i_q \} \subset \{1,\cdots, s\}$,
   glue any point $x\in B_{\Sigma}(i_1,\cdots, i_q)$ with $\beta_{i_1}(x),\cdots,
   \beta_{i_q}(x)$, the quotient space can obviously be identified with $\R^n$
   and, the image of each $B_{\Sigma}$ can be identified with the closure of
   $\mathbb{P}_{\Sigma}$ in $\R^n$.\nn

    \begin{figure}
          % Requires \usepackage{graphicx}
         \includegraphics[width=0.57\textwidth]{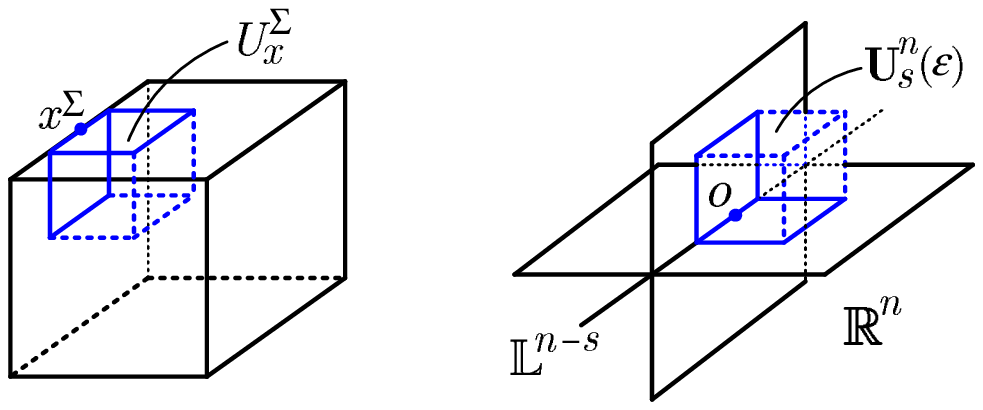}
          \caption{  }\label{p:Cubic-U}
      \end{figure}

    Next, let us see how the seal map $\zeta_{\mathcal{F}}$
    glues all the components of $\widehat{\mathbf{F}}(j_1,\cdots, j_s)$ together.
    By our discussion in section~\ref{Section4},
    for any set $\Sigma =\{ i_1,\cdots, i_r\} \subset\{ 1,\cdots, s \}$,
     $$ \mathbf{F}(j_1,\cdots, j_s)^{\Sigma} =
   \widetilde{\tau}_{k_r}\circ \cdots \circ
     \widetilde{\tau}_{k_1} (\mathbf{F}(j_1,\cdots, j_s))
     $$
     where $(k_1,\cdots, k_r)$ is the derived sequence of $(i_1,\cdots, i_r)$
    from $(j_1,\cdots, j_s)$. For any point $x$ in the
    relative interior of $\mathbf{F}(j_1,\cdots, j_s)$,
     we define its family point
     \[  x^{\Sigma} := \widetilde{\tau}_{k_r}\circ \cdots \circ
     \widetilde{\tau}_{k_1} (x)
      \in \mathbf{F}(j_1,\cdots, j_s)^{\Sigma}\]
     By our discussion in section~\ref{Section4},
    the definition of $x^{\Sigma}$
    is independent on the order of $i_1,\cdots, i_r$, hence determined only by $\Sigma$ and
    $(j_1,\cdots, j_s)$.
    For each $\Sigma$, choose a cubic open neighborhood $U^{\Sigma}_x$ of $x^{\Sigma}$
    in $\mathcal{C}^n$ so that there exists an isometry from $U^{\Sigma}_x$ to
    a standard
    half-open cube $\mathbf{U}^n_s(\varepsilon)$ in $\R^n$,
    which maps $x^{\Sigma}$ to the origin.
  \[ \mathbf{U}^n_s(\varepsilon) := \{ (x_1,\cdots, x_n)\in \R^n\, |\,
     0\leq x_i < \varepsilon, 1\leq i \leq s\ \text{and}\,
     -\frac{\varepsilon}{2} < x_{j} < \frac{\varepsilon}{2}, s+1 \leq j \leq
     n\} \]
     Moreover, we can choose $\varepsilon$ to be small enough so that
     $U^{\Sigma}_x \cap U^{\Sigma'}_x =\varnothing$ for any $\Sigma \neq \Sigma'$.
     Note that
     $U^{\Sigma}_x$ is a nice manifold with corners whose faces lie in
     $U^{\Sigma}_x\cap \partial \mathcal{C}^n$ (see Figure~\ref{p:Cubic-U}).
    The seal map $\zeta_{\mathcal{F}}$ will glue the the $2^s$ half-open cubes $\{ U^{\Sigma}_x\, ; \, \Sigma
    \subset\{1,\cdots, s\} \}$ along their boundaries via the structure map
    $\{  \tau_j \}$ of
    $\mathcal{F}$. So the shape of  $Q^n_{\mathcal{F}}$ around $\zeta_{\mathcal{F}}(x)$
    is completely determined by how these $U^{\Sigma}_x$ are glued. Next, we
    make a comparison between the gluing of
     $\{ U^{\Sigma}_x\, ; \, \Sigma \subset\{1,\cdots, s\} \}$
      by $\{ \tau_j\}$ and the above standard gluing
     of $\{ B_{\Sigma}\, ; \, \Sigma \subset\{1,\cdots, s\} \}$ by $\{ \beta_l \}$ in $\R^n$.\nn

     To make the comparison more explicit, we define a synthetic notation as follows.
     For a fixed sequence $(j_1,\cdots, j_s)$ and any
      set $\Sigma =\{ i_1,\cdots, i_r\} \subset\{ 1,\cdots, s \}$,
      let $(k_1,\cdots, k_r)$ be the derived sequence of $(i_1,\cdots, i_r)$
    from $(j_1,\cdots, j_s)$. Then define
      \[   j^{\Sigma}_i := \widetilde{\sigma}_{k_r}\circ \cdots \circ
         \widetilde{\sigma}_{k_1} (j_i),\ i=1,\cdots, s.
     \]
    By Lemma~\ref{Lem:Order-Indep}, $j^{\Sigma}_i$ depends only on
    the set $\Sigma$ and $(j_1,\cdots, j_s)$. Then we have:
    \[   \mathbf{F}(j_1,\cdots, j_s)^{\Sigma} \equiv \mathbf{F}(j^{\Sigma}_1,\cdots, j^{\Sigma}_s)
    =\mathbf{F}(j^{\Sigma}_1)\cap \cdots \cap \mathbf{F}(j^{\Sigma}_s). \]
    Then all the facets of $U^{\Sigma}_x$ are
     $U^{\Sigma}_x(j^{\Sigma}_i) = U^{\Sigma}_x\cap
     \mathbf{F}(j^{\Sigma}_i)$, $i=1,\cdots, s$. Moreover,
     for any nonempty set $\{ i_1,\cdots, i_q \} \subset \{1,\cdots, s\}$, we define
     $$U^{\Sigma}_x(j^{\Sigma}_{i_1},\cdots, j^{\Sigma}_{i_q}) := U^{\Sigma}_x(j^{\Sigma}_{i_1})
      \cap \cdots \cap U^{\Sigma}_x(j^{\Sigma}_{i_q})=
      U^{\Sigma}_x \cap \mathbf{F}(j^{\Sigma}_{i_1},\cdots, j^{\Sigma}_{i_q}).$$
   Obviously, $U^{\Sigma}_x(j^{\Sigma}_{i_1},\cdots, j^{\Sigma}_{i_q})$
   is a codimension-$q$ face of $U^{\Sigma}_x$ and any codimension-$q$ face of $U^{\Sigma}_x$ can be written in this
    form. In addition, for any $l\in \{ i_1,\cdots, i_q\}$,
    \begin{equation} \label{Equ:Compare-2}
 \text{we claim:}\ \  \tau_{j^{\Sigma}_l}\left( U^{\Sigma}_x(j^{\Sigma}_{i_1},\cdots, j^{\Sigma}_{i_q}) \right) =
    U^{\Sigma\ominus \{l\}}_x(j^{\Sigma\ominus \{l\}}_{i_1},\cdots, j^{\Sigma\ominus \{l\}}_{i_q}).
    \end{equation}
     Indeed, by the above definition of $j^{\Sigma}_l$ and Lemma~\ref{Lem:Second-Simple-2}, we have
    \begin{align*}
  \tau_{j^{\Sigma}_l}
    \left(\mathbf{F}(j^{\Sigma}_1,\cdots, j^{\Sigma}_s) \right)
     & =
    \mathbf{F}(\widetilde{\sigma}_{j^{\Sigma}_l} (j^{\Sigma}_1),\cdots,
     \widetilde{\sigma}_{j^{\Sigma}_l} (j^{\Sigma}_s)) \\
    & \equiv \mathbf{F}(j_1,\cdots, j_s)^{\Sigma\ominus \{l\}}
    \equiv \mathbf{F}(j^{\Sigma\ominus \{l\}}_1,\cdots, j^{\Sigma\ominus \{l\}}_s)
     \end{align*}
      $$ \ \Longrightarrow\ \ \widetilde{\sigma}_{j^{\Sigma}_l} (j^{\Sigma}_i) = j^{\Sigma\ominus
     \{l\}}_i,\ i=1,\cdots, s.$$
    \begin{align*}
    \mathrm{So} \ \ \tau_{j^{\Sigma}_l}\left(
    \mathbf{F}(j^{\Sigma}_{i_1},\cdots, j^{\Sigma}_{i_q}) \right) =
    \mathbf{F}(\widetilde{\sigma}_{j^{\Sigma}_l} (j^{\Sigma}_{i_1}),\cdots,
     \widetilde{\sigma}_{j^{\Sigma}_l} (j^{\Sigma}_{i_q})) =
     \mathbf{F}(j^{\Sigma\ominus \{l\}}_{i_1},\cdots, j^{\Sigma\ominus \{l\}}_{i_q}).
    \end{align*}
    Notice that $\tau_{j^{\Sigma}_l}$ sends $\mathbf{F}(j_1,\cdots, j_s)^{\Sigma}$
    to $\mathbf{F}(j_1,\cdots, j_s)^{\Sigma\ominus \{l\}}$, so
    $\tau_{j^{\Sigma}_l}(x^{\Sigma}) = x^{\Sigma\ominus \{ l\}}$.
    Then since $\tau_{j^{\Sigma}_l}$ is an isometry with respect to the Euclidean metric on the faces,
     it will map
   $U^{\Sigma}_x \cap \mathbf{F}(j^{\Sigma}_{i_1},\cdots, j^{\Sigma}_{i_q})$
   onto $U^{\Sigma\ominus \{l\}}_x \cap
   \mathbf{F}(j^{\Sigma\ominus \{l\}}_{i_1},\cdots, j^{\Sigma\ominus
   \{l\}}_{i_q})$, which confirms our claim. \nn

    Next, for each $\Sigma \subset \{ 1,\cdots, s \}$ we choose a face-preserving homeomorphism
       $$\Phi^{\Sigma} : U^{\Sigma}_x \rightarrow B_{\Sigma}$$
       so that $\Phi^{\Sigma}(U^{\Sigma}_x(j^{\Sigma}_i)) = B_{\Sigma}(i)$ for any
       $1\leq i \leq s$. Then for any nonempty set
       $\{ i_1,\cdots, i_q \} \subset \{1,\cdots, s\}$, we have
       $$\Phi^{\Sigma} \left(U^{\Sigma}_x(j^{\Sigma}_{i_1},\cdots, j^{\Sigma}_{i_q})\right) =
        B_{\Sigma}(i_1,\cdots, i_q).  $$

       By comparing~\eqref{Equ:Compare-1} and~\eqref{Equ:Compare-2} it is easy to see
       that: for any $l\in \{ i_1,\cdots, i_q\}$
    \begin{equation*}
    \beta_l \left(\Phi^{\Sigma} \left(U^{\Sigma}_x(j^{\Sigma}_{i_1},\cdots, j^{\Sigma}_{i_q})\right) \right) =
          \Phi^{\Sigma\ominus \{l\}} \left(\tau_{j^{\Sigma}_l}
            \left(U^{\Sigma}_x(j^{\Sigma}_{i_1},\cdots, j^{\Sigma}_{i_q}) \right)
            \right) =   B_{\Sigma\ominus \{ l\}}(i_1,\cdots, i_q).
    \end{equation*}
    In other words, the set of homeomorphisms
    $\{\Phi^{\Sigma}: U^{\Sigma}_x \rightarrow B_{\Sigma} \}$
     are equivariant with respect to the action of $\{ \tau_j \}$
     and $\{ \beta_l \}$ on the faces of $\{ U^{\Sigma}_x\}$
     and $\{  B_{\Sigma} \}$. So when going down to the quotient space,
      $\{\Phi^{\Sigma}:  U^{\Sigma}_x \rightarrow B_{\Sigma} \}$
      induces a homeomorphism from
     a neighborhood of $\zeta_{\mathcal{F}}(x)$ in $Q^n_{\mathcal{F}}$
     to a neighborhood of the origin in $\R^n$.
    So we conclude that $Q^n_{\mathcal{F}}$ is a closed manifold.
     Moreover, we can show that
     $Q^n_{\mathcal{F}}$ admits a flat Riemannian metric below.\nn

 \begin{thm} \label{thm:flat-metric}
       For any perfect regular facets-pairing structure
     $\mathcal{F}$ on $\mathcal{C}^n$ ($n\geq 2$), the seal space $Q^n_{\mathcal{F}}$ is a
       closed manifold which admits a flat Riemannian metric.
    \end{thm}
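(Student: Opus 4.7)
The plan is to upgrade the local topological model constructed in the discussion just before the theorem into a local \emph{isometric} model. The argument that $Q^n_{\mathcal{F}}$ is a closed manifold is already essentially complete: compactness comes from that of $\mathcal{C}^n$, Hausdorffness follows from the finiteness of each face family, and each point of $Q^n_{\mathcal{F}}$ has a neighborhood homeomorphic to $\R^n$ via the charts $\Phi^{\Sigma}$. So the substantive new content is the flat metric, and this will come from choosing the $\Phi^{\Sigma}$'s more carefully.

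My key observation is that regularity (Definition~\ref{Def:Regular-FP-Struc}) makes every structure map $\tau_j$ a Euclidean isometry between facets, and hence any valid composition $\tau_{k_m}\circ\cdots\circ\tau_{k_1}$ is an isometry on each face. In particular, on the interior of $\mathcal{C}^n$, which injects into $Q^n_{\mathcal{F}}$ by the lemma after Definition~\ref{Def:Strong-FP-Struc} combined with Theorem~\ref{thm:Perfect-Strong}, the quotient trivially inherits the flat Euclidean metric. The task then reduces to extending this metric smoothly across the image of every proper face.

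For this I would refine the chart $\Phi^{\Sigma}: U^{\Sigma}_x \to B_{\Sigma}$ of the preceding discussion: both $U^{\Sigma}_x$ and $B_{\Sigma}$ are right-angled half-cubes of the same edge lengths, so $\Phi^{\Sigma}$ can and will be chosen to be a Euclidean isometry. The equivariance
\[
\beta_l \circ \Phi^{\Sigma} \;=\; \Phi^{\Sigma\ominus\{l\}} \circ \tau_{j^{\Sigma}_l},
\]
which is the content of~\eqref{Equ:Compare-1} and~\eqref{Equ:Compare-2}, then implies that the induced local homeomorphism from a neighborhood of $\zeta_{\mathcal{F}}(x)$ in $Q^n_{\mathcal{F}}$ to a neighborhood of the origin in $\R^n$ is the gluing of Euclidean isometries along boundary Euclidean isometries. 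Hence it pulls the flat metric on $\R^n$ back to a well-defined local flat metric on a neighborhood of $\zeta_{\mathcal{F}}(x)$ in $Q^n_{\mathcal{F}}$.

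The main potential obstacle is verifying that these local flat metrics glue to a single \emph{global} flat Riemannian metric, that is, that the metric built near one face-orbit point agrees with the one built near a nearby point of a different (possibly higher- or lower-dimensional) face-orbit. Since both local charts are built out of pieces of $\mathcal{C}^n$ identified by Euclidean isometries from the collection $\{\tau_j\}$, their restrictions to any common open subset of $Q^n_{\mathcal{F}}$ must both equal the push-forward of the standard Euclidean metric on the corresponding piece of $\mathcal{C}^n$; compatibility is therefore automatic. The resulting flat Riemannian metric on $Q^n_{\mathcal{F}}$ is nothing other than the unique metric for which $\zeta_{\mathcal{F}}$ is a local Euclidean isometry on the interior of every face.
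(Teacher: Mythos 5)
Your proposal is correct and follows essentially the same route as the paper: both arguments rest entirely on the local comparison, established in the discussion just before the theorem, between the $2^s$ cubic neighborhoods $U^{\Sigma}_x$ glued by the isometric structure maps $\{\tau_j\}$ and the $2^s$ right-angled cones $B_{\Sigma}$ glued by the reflections $\{\beta_l\}$, which shows that every point of $Q^n_{\mathcal{F}}$ has a neighborhood isometric to a Euclidean one. The only (inessential) difference is the globalization step: you assemble the flat metric from an atlas of isometric charts and check compatibility on overlaps by density, whereas the paper equips $Q^n_{\mathcal{F}}$ with the quotient length metric $d_R$, observes it is locally isometric to $\R^n$, and cites the standard fact that such a length metric is induced by a global flat Riemannian metric.
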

    \begin{proof}
     As a quotient space of $\mathcal{C}^n$,
   let $Q^n_{\mathcal{F}}$ be equipped with the quotient
    metric $d_R$ induced from the Euclidean metric $d$ on $\mathcal{C}^n$.
   Obviously, the metric $d$ on $\mathcal{C}^n$ is intrinsic, in
   other words $(\mathcal{C}^n,d)$ is a length space. So $(Q^n_{\mathcal{F}},d_R)$ is also
  a length space (see chapter 3.1 of~\cite{Burago2001}). \n

   In addition, as we have shown above,
    for any face $\mathbf{F}(j_1,\cdots, j_s)$ of $\mathcal{C}^n$ and any point $x$ in
    the relative interior of $\mathbf{F}(j_1,\cdots, j_s)$, the way
    how the structure map $\{ \tau_j\}$ of $\mathcal{F}$ fit
     the cubic neighborhood $\{ U^{\Sigma}_x \}$ of
    all the family points of $x$
    together is exactly the same as gluing
    $2^s$ right-angled cones $\{ B_{\Sigma}\, ; \, \Sigma \subset \{ 1,\cdots, s\} \}$
    by $\{ \beta_l\}$ in $\R^n$.
   Since $\mathcal{F}$ is a regular facets-pairing structure, each
   $ \tau_j $ is isometric with respect to the
   Euclidean metric on the facets of $\mathcal{C}^n$. So
    the neighborhood of $\zeta_{\mathcal{F}}(x)$
  in $Q^n_{\mathcal{F}}$ with the quotient metric is isometric to an open
  neighborhood of the origin in the Euclidean space.
  So the metric $d_R$ on $Q^n_{\mathcal{F}}$ is locally a Riemannian
  flat metric.
  Then there exists a global flat Riemannian metric $g$ on $Q^n_{\mathcal{F}}$ which induces the
  quotient metric $d_R$ (see ~\cite{KobaNomizu} or~\cite{Burago2001}).
  \end{proof}

 By the above theorem, we can
 construct many closed flat Riemannian manifolds from perfect facets-pairing structures of cubes.
 This construction should be useful for us to understand the geometry and topology of these
 manifolds in the future.\nn

  If we only assume the facets-pairing structure $\mathcal{F}$ on $W^n$ is strong,
     the seal space $Q^n_{\mathcal{F}}$ may not be a closed manifold (see
    Example~\ref{Exam:Trivial} and Example~\ref{Exam:Strang-Strong-2}).
   On the other hand,
     there are many strong but non-perfect facets-pairing structures whose seal spaces
     are indeed closed manifolds.
     We will see examples of such kind of facets-paring structures on cubes
     in section~\ref{Section8} and section~\ref{Section9}. \\

 \section{Real Bott Manifolds as the seal spaces of cubes}\label{Section6}

   In this section, we will study a
  special class of real toric manifolds called real Bott manifolds.
  We will relate real Bott manifolds to some special type of regular facets-pairing
  structures on a cube.  Recall that a manifold $M$ is called a \textit{real Bott manifold} if there
  is a sequence of $\R P^1$ bundles as following.
  \begin{equation} \label{Equ:Real-Bott}
       M = M_n \overset{\R P^1}{\longrightarrow} M_{n-1}
   \overset{\R P^1}{\longrightarrow} \cdots \overset{\R P^1}{\longrightarrow}
   M_1 \overset{\R P^1}{\longrightarrow} M_0=\{ \text{a point} \}
   \end{equation}
  where each $\pi_i: M_i \rightarrow M_{i-1}$ the projective bundle of
   the Whitney sum of two real line bundles over $M_{i-1}$.
   Another way to describe real Bott manifolds is using binary square matrices
  (see~\cite{MasKam09-1} and ~\cite{SuMasOum10}). A binary matrix
  is a matrix with all its entries in $\Z_2 = \Z\slash 2\Z $.
  For any binary square matrix $A$, let $A^i_j \in \Z_2$  denote the $(i,j)$ entry of
     $A$ and let $A^i$ and $A_j$ be the $i$-th row and $j$-column vector of
     $A$. \nn

   A binary square matrix $A$ is called a \textit{Bott matrix}
     if it is conjugate to a strictly upper triangular
    binary matrix via a permutation matrix. Obviously, $A$ is a Bott matrix will imply that
    all the diagonal entries of $A$ are zero.
    We use $\mathfrak{B}(n)$ to denote the set of all $n\times n$ Bott matrices.
  \nn

    Generally, for any $n\times n $ binary matrix $A$ with zero diagonal,
     we can define a set of Euclidean motions $s_1,\cdots, s_n$ on $\R^n$ by:
  \[ s_i(x_1,\cdots,x_n):= ((-1)^{A^i_1}x_1, \cdots, (-1)^{A^i_{i-1}}x_{i-1}, x_i + \frac{1}{2},
     (-1)^{A^i_{i+1}}x_{i+1}, \cdots, (-1)^{A^i_n}x_n ) \]
   Let $\Gamma(A)$ be the discrete subgroup of $\mathrm{Isom}(\R^n)$
   generated by $s_1, \cdots, s_n$ and let
   $M(A) = \R^n \slash \Gamma(A)$. It turns
    out that $M(A)$ is a real Bott manifold when $A$ is a Bott matrix. Conversely,
    any real Bott manifold
    can be obtained in this way.   \nn

  On the other hand, for any binary square matrix $A$ with zero diagonal,
   we can define a
    set of signed permutations of $[\pm n]$ from $A$ by:
     \begin{align}\label{Equ:FPS-A}
       \omega (j) &= -j, \ \  \forall\, j\in [\pm n]  \notag \\
      \widetilde{\sigma}^A_j ( k ) &= \left\{
                                     \begin{array}{ll}
                                         (-1)^{A^{|j|}_{|k|}}\cdot k, & \hbox{$k\in [\pm n],\ k\neq \pm j$;} \\
                                       -k, & \hbox{$k=\pm j$.}
                                     \end{array}
                                   \right.
      \end{align}

    It is easy to see that each
    $\widetilde{\sigma}^A_j$ is a signed permutation on $[\pm n]$ and we have: \n
   \begin{itemize}
     \item $\widetilde{\sigma}^A_{-j}= \widetilde{\sigma}^A_j$ for any $j \in [\pm n]$;\n

     \item  $\widetilde{\sigma}^A_{j_1} \circ \widetilde{\sigma}^A_{j_2} =
           \widetilde{\sigma}^A_{j_2} \circ \widetilde{\sigma}^A_{j_1}$ for any
               $j_1, j_2 \in [\pm n]$. \nn
   \end{itemize}
  Notice that if we define $\widetilde{A} = A + I_n$, where $I_n$ is
  the identity matrix, then
  $$\widetilde{\sigma}^A_j ( k ) = (-1)^{\widetilde{A}^{|j|}_{|k|}}\cdot k ,
   \ \forall\, j,k\in [\pm n].$$
By Theorem~\ref{thm:Main-2-Classify}, we can easily verify that
    this $\omega$ and $\{ \widetilde{\sigma}^A_j \}_{j\in [\pm n]}$ determine a regular
  facets-pairing structure on $\mathcal{C}^n$, denoted by
  $\mathcal{F}_A$.
   Let
  $\widetilde{\tau}^A_j : \mathcal{C}^n \rightarrow \mathcal{C}^n$
  be the corresponding symmetry of $\mathcal{C}^n$, i.e.
  $\widetilde{\tau}^A_j(\mathbf{F}(k)) = \mathbf{F}(\widetilde{\sigma}^A_j(k))$
  for $\forall\, k \in [\pm n]$.
  In addition, the structure map $\tau^A_j : \mathbf{F}(j) \rightarrow \mathbf{F}(\omega(j)) = \mathbf{F}(-j)$
   of $\mathcal{F}_A$ is given by
    $$\tau^A_j(\mathbf{F}(j, k)) = \mathbf{F}(-j,\widetilde{\sigma}^A_j(k)), \
    \forall\, k \neq \pm j .$$
  More specifically, for $\forall\, j\in [\pm n]$ and $\forall\, x=(x_1,\cdots, x_n) \in \mathbf{F}(j)$, we have:
   \begin{align} \label{Equ:tau_j-Cube}
     &\ \ \ \, \tau^A_j (x_1,\cdots,x_n) \notag \\
      &=
      ((-1)^{A^{|j|}_1}x_1, \cdots, (-1)^{A^{|j|}_{|j|-1}}x_{|j|-1}, -x_{|j|},
     (-1)^{A^{|j|}_{|j|+1}}x_{|j|+1}, \cdots, (-1)^{A^{|j|}_n}x_n )
   \end{align}
  Since
  $\tau^A_j$ only makes some sign changes to each coordinate of $x$,
  So intuitively such an $\mathcal{F}_A$ is the easiest type of
   facets-pairing structure on the cube. In the rest of this paper,
   we will mainly study such $\mathcal{F}_A$'s and try to answer the Question 1 and
   Question 2 for these facets-pairing structures. First, we see how
   to classify $\mathcal{F}_A$ up to the equivalence of
   facets-pairing structures on $\mathcal{C}^n$. \n

  \begin{thm}\label{thm:F_A-Classify}
    Suppose $A$ and $B$ are two $n\times n$ binary matrices with
    zero diagonal. Then $\mathcal{F}_A$ and $\mathcal{F}_B$ are equivalent
    facets-pairing structures if and only
    if there exists an $n\times n$ permutation matrix $P$ so that $B = P^{-1}AP$.
  \end{thm}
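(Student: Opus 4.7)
My plan is to apply Theorem~\ref{thm:Main-2-Classify} and reduce the claim to a computation. By that theorem, $\mathcal{F}_A$ and $\mathcal{F}_B$ are equivalent if and only if their tuples $(\omega_A;\widetilde{\sigma}^A_j)_{j\in[\pm n]}$ and $(\omega_B;\widetilde{\sigma}^B_j)_{j\in[\pm n]}$ are shuffled-conjugate by some $S\in\mathfrak{S}^{\pm}_n$. Since $\omega_A(j)=\omega_B(j)=-j$ and every $S\in\mathfrak{S}^{\pm}_n$ commutes with negation, the $\omega$-condition $\omega_B=S^{-1}\omega_A S$ is automatic, and I only need to analyze the condition $\widetilde{\sigma}^B_j=S^{-1}\widetilde{\sigma}^A_{S(j)}S$ for all $j\in[\pm n]$.

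The key observation is the following parametrization: any $S\in\mathfrak{S}^{\pm}_n$ is determined by an underlying permutation $\pi\in S_n$ (with $\pi(|j|)=|S(j)|$) together with signs $\epsilon_1,\dots,\epsilon_n\in\{\pm 1\}$, so that $S(j)=\mathrm{sgn}(j)\cdot\epsilon_{|j|}\cdot\pi(|j|)$. Using the defining formula~\eqref{Equ:FPS-A} together with $\widetilde{A}=A+I_n$, I compute directly
\[
S^{-1}\widetilde{\sigma}^A_{S(j)}S(k)\;=\;S^{-1}\bigl((-1)^{\widetilde{A}^{\pi(|j|)}_{\pi(|k|)}}S(k)\bigr)\;=\;(-1)^{\widetilde{A}^{\pi(|j|)}_{\pi(|k|)}}\,k,
\]
where I use that $|S(k)|=\pi(|k|)$ and that $S^{-1}$ is linear in the sign. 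On the other hand $\widetilde{\sigma}^B_j(k)=(-1)^{\widetilde{B}^{|j|}_{|k|}}k$. The sign freedom $\{\epsilon_i\}$ thus drops out entirely, and the equivalence of the two facets-pairing structures becomes the single condition
\[
\widetilde{B}^{|j|}_{|k|}=\widetilde{A}^{\pi(|j|)}_{\pi(|k|)}\qquad\text{for all }j,k\in[\pm n].
\]

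Next, I translate this entrywise identity into the matrix statement. Letting $P=P_\pi$ be the permutation matrix associated to $\pi$, a short standard calculation with Kronecker deltas shows $(P^{-1}\widetilde{A}P)^i_l=\widetilde{A}^{\pi(i)}_{\pi(l)}$, so the condition above is exactly $\widetilde{B}=P^{-1}\widetilde{A}P$. Since $P^{-1}I_nP=I_n$, this is equivalent to $B=P^{-1}AP$. This gives both directions: given such a $P$, set $\epsilon_i=1$ and take $S$ to be the lift of $\pi$ to $\mathfrak{S}^{\pm}_n$; conversely, any shuffled-conjugating $S$ provides its underlying $\pi$ and hence a permutation matrix $P$ with $B=P^{-1}AP$.

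This is essentially a bookkeeping argument, and I do not anticipate a serious obstacle; the one mildly delicate point is keeping straight the relation between $S\in\mathfrak{S}^{\pm}_n$ and the $n\times n$ permutation matrix $P$, and in particular noting that the signs $\epsilon_i$ play no role, so the classification of the $\mathcal{F}_A$ up to equivalence is controlled solely by the \emph{unsigned} permutation action on $\{1,\dots,n\}$, i.e. by conjugation of $A$ by permutation matrices.
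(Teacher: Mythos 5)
Your proposal is correct and follows essentially the same route as the paper: both invoke Theorem~\ref{thm:Main-2-Classify}, observe that the $\omega$-condition is automatic, reduce the shuffled-conjugacy condition to the entrywise identity $\widetilde{B}^{|j|}_{|k|}=\widetilde{A}^{|S(j)|}_{|S(k)|}$, and translate this into conjugation of $A$ by the permutation matrix of the underlying unsigned permutation. Your explicit remark that the sign data $\epsilon_i$ of $S$ drops out is a nice clarification but matches the paper's step of passing from $\eta$ to $\widehat{\eta}(i)=|\eta(i)|$.
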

  \begin{proof}
     By Theorem~\ref{thm:Main-2-Classify}, $\mathcal{F}_B$ is equivalent to
      $\mathcal{F}_A$ if and only if there exists a signed
      permutation $\eta$ on $[\pm n]$ so that
      $\omega= \eta^{-1} \omega \eta$ and
      \begin{equation} \label{Equ:A-B-Equiv-0}
       \widetilde{\sigma}^B_j(k)= \eta^{-1} \widetilde{\sigma}^A_{\eta(j)}
      \eta(k),\ \forall\, j,k \in [\pm n] .
      \end{equation}
      It is clear that $\omega= \eta^{-1} \omega
      \eta$ always holds. Let $\widetilde{A} = A+I_n$, $\widetilde{B} = B+ I_n$.
       Then by the definition of $\{ \widetilde{\sigma}^A_j \}_{j\in [\pm n]}$ and
      $\{ \widetilde{\sigma}^B_j \}_{j\in [\pm n]}$, we have
      \begin{equation} \label{Equ:A-B-Equiv-1}
        \widetilde{\sigma}^B_j(k)= \eta^{-1} \widetilde{\sigma}^A_{\eta(j)}
        \eta(k) \ \Longleftrightarrow \ \widetilde{B}^{|j|}_{|k|} =  \widetilde{A}^{|\eta(j)|}_{|\eta(k)|}
        \in \Z_2 \  \Longleftrightarrow \  B^{|j|}_{|k|} =
        A^{|\eta(j)|}_{|\eta(k)|} \in \Z_2
       \end{equation}
 Let $\widehat{\eta}$ be a transformation on the set $\{ 1,\cdots , n\}$
    defined by:
       $$\widehat{\eta}(i) = |\eta(i)|,\  1\leq i \leq n . $$
    Since $\eta$ is a signed permutation on $[\pm n]$, it is easy
    to see that $\widehat{\eta}$ is a bijection, i.e.
    $\widehat{\eta}$ is a permutation on $\{ 1,\cdots , n\}$.
    Then the right side of~\eqref{Equ:A-B-Equiv-1} is
     equivalent to
       \begin{equation} \label{Equ:A-B-Equiv-2}
       {B^j_k} = { A^{\widehat{\eta}(j)}_{\widehat{\eta}(k)} }
        \in \Z_2, \ 1\leq\forall\, j,k \leq n .
      \end{equation}
    Let $P_{\widehat{\eta}}$ be the permutation matrix
    corresponding to $\widehat{\eta}$. Then by the fact that
    $P^{-1}_{\widehat{\eta}} = (P_{\widehat{\eta}})^{t}$,
     it is easy to see that~\eqref{Equ:A-B-Equiv-2}
   is exactly equivalent to $B = P_{\widehat{\eta}}^{-1} A
   P_{\widehat{\eta}}$.\n

   Conversely, if there exists an $n\times n$ permutation matrix $P$
   so that $B = P^{-1}AP$, let $\widehat{\eta}$ the
   permutation on $\{ 1,\cdots, n\}$ corresponding to $P$. Notice
   that $\widehat{\eta}$ canonically determines a signed
   permutation $\eta$ on $[\pm n]$ by:
    $$\eta(\pm i) := \pm \widehat{\eta}(i), 1\leq \forall\, i \leq n.$$
    Then because $B = P^{-1}AP$, this $\eta$ will satisfy~\eqref{Equ:A-B-Equiv-0}.
    So $\mathcal{F}_A$ and $\mathcal{F}_B$ are equivalent facets-pairing structures
    on $\mathcal{C}^n$.
  \end{proof}
  \n

  \begin{lem}
   When $A$ is a Bott matrix,
   the seal space $Q^n_{\mathcal{F}_A}$ is homeomorphic to the real Bott manifold
    $M(A)$.
 \end{lem}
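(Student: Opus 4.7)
The plan is to build a continuous bijection $\bar\phi : Q^n_{\mathcal{F}_A} \to M(A)$ and then upgrade it to a homeomorphism via the compact-to-Hausdorff principle. The key algebraic observation is that, comparing~\eqref{Equ:tau_j-Cube} with the definition of the Euclidean motion $s_{|j|}$ generating $\Gamma(A)$, the structure maps of $\mathcal{F}_A$ are precisely the boundary restrictions of the generators of $\Gamma(A)$: for each $j \in \{1,\dots,n\}$ one reads off $\tau^A_{-j} = s_j|_{\mathbf{F}(-j)}$ and $\tau^A_{j} = s_j^{-1}|_{\mathbf{F}(j)}$. Consequently the composition $\mathcal{C}^n \hookrightarrow \R^n \to \R^n/\Gamma(A) = M(A)$ identifies $x$ with $\tau^A_j(x)$ on every facet $\mathbf{F}(j)$, hence descends to a continuous map $\bar\phi : Q^n_{\mathcal{F}_A} \to M(A)$.

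Next I would prove $\bar\phi$ is surjective. By Theorem~\ref{thm:F_A-Classify} I may assume, after conjugating $A$ by a suitable permutation matrix, that $A$ is strictly upper triangular; this is harmless because the same permutation matrix gives an isometry $M(A) \cong M(P^{-1}AP)$. Since $s_j^2$ is translation by $e_j$, the lattice $\Z^n$ sits inside $\Gamma(A)$, so any $y \in \R^n$ can first be $\Z^n$-translated into $[-1/4,3/4]^n$. Then I process coordinates from $j=n$ down to $j=1$: whenever the current $j$-th coordinate lies in $(1/4,3/4]$, apply $s_j^{-1}$; this pulls that coordinate into $(-1/4,1/4]$, only sign-flips coordinates $k>j$ (which are already in $[-1/4,1/4]$ from earlier steps, hence stay there), and fixes coordinates $k<j$ because upper triangularity forces $A^j_k = 0$ for $k \le j$. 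After $n$ steps the image lies in $\mathcal{C}^n$.

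The main obstacle is injectivity. My plan is to deduce it from a tiling statement: the translates $\{\gamma\mathcal{C}^n\}_{\gamma \in \Gamma(A)}$ cover $\R^n$ with pairwise disjoint interiors. Granting surjectivity, the tiling reduces to the volume matching $\mathrm{Vol}(\mathcal{C}^n) = 2^{-n} = \mathrm{Vol}(\R^n/\Gamma(A))$, the second equality coming from the index identity $[\Gamma(A):\Z^n] = 2^n$. This index computation is straightforward from the relations $s_j^2 \in \Z^n$ and $s_i s_j \equiv s_j s_i \pmod{\Z^n}$ forced by strictly upper triangular $A$; the commutativity modulo translations follows from a direct computation using $A^j_i = 0$ for $i < j$. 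Combined with the classical fact that $\Gamma(A)$ acts freely and properly discontinuously on $\R^n$ when $A$ is a Bott matrix (so that $M(A)$ is indeed a smooth closed manifold), the tiling statement follows. From the tiling, injectivity of $\bar\phi$ is immediate: if $x,y \in \mathcal{C}^n$ and $y = \gamma x$, a straight-line path from $x$ to $\gamma x$ in $\R^n$ traverses a finite sequence of tiles $\gamma_0\mathcal{C}^n,\gamma_1\mathcal{C}^n,\dots,\gamma_m\mathcal{C}^n$ meeting successively along common facets, and each facet crossing is realized by the corresponding $\tau^A_{\pm j}$, exhibiting $x \sim y$ in the seal space. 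Since $Q^n_{\mathcal{F}_A}$ is compact and $M(A)$ is Hausdorff, the continuous bijection $\bar\phi$ is the desired homeomorphism.
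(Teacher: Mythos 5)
Your overall strategy is the same as the paper's (the paper simply asserts that $\mathcal{C}^n$ is a fundamental domain for $\Gamma(A)$ and declares the homeomorphism ``easy to see''), and most of the details you supply are sound: the identification $\tau^A_j=s_{|j|}^{\mp 1}|_{\mathbf{F}(j)}$ is correct, so $\bar\phi$ is well defined; the coordinate-by-coordinate surjectivity argument for strictly upper triangular $A$ works; and the volume/index argument does give that the tiles $\{\gamma\mathcal{C}^n\}$ have pairwise disjoint interiors (for the equality $[\Gamma(A):\Z^n]=2^n$ you still need the easy lower bound, e.g.\ that a product of distinct $s_j$'s has half-integer translation part in the $j$-th coordinate and so cannot lie in $\Z^n$; the two relations you cite only give $\leq 2^n$).

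The genuine gap is in the last step, deducing injectivity from the tiling. If $x,y\in\mathcal{C}^n$ and $y=\gamma x$, the straight segment from $x$ to $\gamma x$ joins two points of the convex set $\mathcal{C}^n$ and therefore never leaves $\mathcal{C}^n$: it meets the interior of no other tile and crosses no facet transversally, so it produces no chain of identifications. (Note also that the second tile through $x$ that you actually need to reach is $\gamma^{-1}\mathcal{C}^n$, not $\gamma\mathcal{C}^n$.) The standard repair is Poincar\'e's chamber-crossing argument localized at $x$: choose a ball $B$ around $x$ so small that every tile meeting $B$ contains $x$ (local finiteness), and join a point of $\mathrm{int}(\mathcal{C}^n)\cap B$ to a point of $\mathrm{int}(\gamma^{-1}\mathcal{C}^n)\cap B$ by a path in $B$ avoiding the codimension-two skeleton of the tiling. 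Such a path crosses only relative interiors of facets, giving a chain $\mathcal{C}^n=\gamma_0\mathcal{C}^n,\dots,\gamma_m\mathcal{C}^n=\gamma^{-1}\mathcal{C}^n$ of tiles all containing $x$ and meeting consecutively along facets; since $x$ lies in each successive common facet, each crossing applies a structure map to the current representative of $x$, and one gets $\gamma x=\tau_{F_{k}}\circ\cdots\circ\tau_{F_1}(x)$ as required. With this replacement the proof is complete; the compact-to-Hausdorff conclusion is fine.
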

  \begin{proof}
    Notice that $\mathcal{C}^n$ is a fundamental domain of
    the $\Gamma(A)$ action on $\R^n$. So by the definitions of
    $M(A) = \R^n \slash \Gamma(A)$ and $Q^n_{\mathcal{F}_A}$,
    it is easy to see that they are homeomorphic.
  \end{proof}

  \nn

    In the rest of this section, we will investigate the relationship between the
    geometric properties of $\mathcal{F}_A$ and the algebraic properties of $A$ where $A$
    is a binary square matrix with zero diagonal.
    For convenience, we introduce some auxiliary notations as follows.
    For any $n\times n$ binary matrix $A$ with zero diagonal, let
       \begin{equation} \label{Equ:Auxil-Matrix}
      \widetilde{A} :=  A + I_n, \ \text{where}\  I_n \ \text{is the identity
     matrix}.
     \end{equation}
     For any $1\leq j_1 < \cdots < j_s \leq n$, let
    $\widetilde{A}^{j_1\cdots j_s}$ be the $s\times n$ submatrix of $\widetilde{A}$
   formed by the $j_1$-th, $\cdots, j_s$-th row vectors of
   $\widetilde{A}$, and
     we define an
    $s\times s$ submatrix of $\widetilde{A}$ by:
    \begin{equation} \label{Equ:Auxil-SubMatrix}
        \widetilde{A}^{j_1\cdots j_s}_{j_1 \cdots j_s} : = \begin{pmatrix}
       \widetilde{A}^{j_1}_{j_1}  & \cdots & \widetilde{A}^{j_1}_{j_s}  \\
       \cdots  & \cdots & \cdots \\
       \widetilde{A}^{j_s}_{j_1}  & \cdots & \widetilde{A}^{j_s}_{j_s}
       \end{pmatrix}
     \end{equation}
   Usually,
   $ \widetilde{A}^{j_1\cdots j_s}_{j_1 \cdots j_s} $
   is called a \textit{principal minor matrix} of $\widetilde{A}$ and
   its determinant $\det(\widetilde{A}^{j_1\cdots j_s}_{j_1 \cdots j_s})$ is called a
   \textit{principal minor} of $\widetilde{A}$.
   Note that $ \widetilde{A}^{j_1\cdots j_s}_{j_1 \cdots j_s} $ is
   a submatrix of $\widetilde{A}^{j_1\cdots j_s}$, so
   $\mathrm{rank}_{\Z_2}(\widetilde{A}^{j_1\cdots j_s}_{j_1 \cdots j_s})
   \leq \mathrm{rank}_{\Z_2}(\widetilde{A}^{j_1\cdots j_s})$.
    \nn

   \begin{lem} \label{Lem:Component-2}
  In a facets-pairing structure $\mathcal{F}_A$ on $\mathcal{C}^n$,
    for any face $\mathbf{F}(j_1,\cdots, j_s)$ with $1\leq |j_1| < \cdots < |j_s| \leq n$ and any set
   $\Sigma =\{ i_1,\cdots, i_r \} \subset \{ 1,\cdots, s \}$,
    \begin{equation} \label{Equ:Face-matrix-repre}
   \mathbf{F}(j_1,\cdots, j_s)^{\Sigma} \equiv
   \mathbf{F}((-1)^{\varepsilon_1} j_1,\cdots, (-1)^{\varepsilon_s} j_s)
     \end{equation}
   where $\varepsilon_p =  \widetilde{A}^{|j_{i_1}|}_{|j_p|} +
    \cdots +  \widetilde{A}^{|j_{i_r}|}_{|j_p|} \ (\mathrm{mod}\ 2) ,  \ 1\leq p \leq s$.
    In other words, the vector $(\varepsilon_1,\cdots, \varepsilon_s)$ is the sum of the
    $i_1$-th, $\cdots, i_r$-th row vectors of the matrix
    $\widetilde{A}^{|j_1|\cdots |j_s|}_{|j_1|\cdots |j_s|}$.\nn
   \end{lem}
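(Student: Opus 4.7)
The plan is to exploit the fact that for $\mathcal{F}_A$ the signed permutations have a particularly simple uniform description, namely $\widetilde{\sigma}^A_j(k)=(-1)^{\widetilde{A}^{|j|}_{|k|}}k$ for every $j,k\in[\pm n]$. In particular each $\widetilde{\sigma}^A_j$ preserves absolute values and only modifies signs, so the entire computation of $\mathbf{F}(j_1,\ldots,j_s)^{\Sigma}$ reduces to bookkeeping of sign flips in $\Z_2$. Writing $\Sigma=\{i_1,\ldots,i_r\}$ in any chosen order (legitimate by Lemma~\ref{Lem:Order-Indep}), and letting $(k_1,\ldots,k_r)$ be the derived sequence of $(i_1,\ldots,i_r)$ from $(j_1,\ldots,j_s)$, formula~\eqref{Equ:Notation-2} tells me that $\mathbf{F}(j_1,\ldots,j_s)^{\Sigma}\equiv\mathbf{F}(\widetilde{\sigma}^A_{k_r}\circ\cdots\circ\widetilde{\sigma}^A_{k_1}(j_1),\ldots,\widetilde{\sigma}^A_{k_r}\circ\cdots\circ\widetilde{\sigma}^A_{k_1}(j_s))$, so it suffices to evaluate each entry on the right.

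The key calculation proceeds in two steps. First, a trivial induction on $q$ shows that $|k_q|=|j_{i_q}|$: indeed $k_q=\widetilde{\sigma}^A_{k_{q-1}}\circ\cdots\circ\widetilde{\sigma}^A_{k_1}(j_{i_q})$ and each $\widetilde{\sigma}^A_\bullet$ preserves absolute values. Second, I fix $p\in\{1,\ldots,s\}$, set $l_0=j_p$ and $l_q=\widetilde{\sigma}^A_{k_q}(l_{q-1})$, and observe that $|l_q|=|j_p|$ for all $q$, whence
\[
l_q=(-1)^{\widetilde{A}^{|k_q|}_{|l_{q-1}|}}\,l_{q-1}
   =(-1)^{\widetilde{A}^{|j_{i_q}|}_{|j_p|}}\,l_{q-1}.
\]
Iterating this identity from $q=1$ to $q=r$ gives
\[
\widetilde{\sigma}^A_{k_r}\circ\cdots\circ\widetilde{\sigma}^A_{k_1}(j_p)=l_r=(-1)^{\varepsilon_p}\,j_p,\qquad \varepsilon_p=\sum_{q=1}^{r}\widetilde{A}^{|j_{i_q}|}_{|j_p|}\pmod 2,
\]
which is precisely the exponent claimed in the statement.

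Plugging these values back into~\eqref{Equ:Notation-2} yields~\eqref{Equ:Face-matrix-repre}. The final sentence of the lemma, interpreting $(\varepsilon_1,\ldots,\varepsilon_s)$ as the sum of the $i_1$-th,$\ldots$,$i_r$-th row vectors of the principal minor matrix $\widetilde{A}^{|j_1|\cdots|j_s|}_{|j_1|\cdots|j_s|}$, is just a rereading of the formula for $\varepsilon_p$, since the indices $|j_{i_q}|$ and $|j_p|$ both range over $\{|j_1|,\ldots,|j_s|\}$. I anticipate no real obstacle in this proof: the commutativity $\widetilde{\sigma}^A_{j_1}\circ\widetilde{\sigma}^A_{j_2}=\widetilde{\sigma}^A_{j_2}\circ\widetilde{\sigma}^A_{j_1}$ already noted in the excerpt means the computation is order-independent, in agreement with Lemma~\ref{Lem:Order-Indep} and with the manifest symmetry of the expression for $\varepsilon_p$ in $i_1,\ldots,i_r$; the only thing to watch is keeping track that the $k_q$'s have the correct absolute values so that the exponent picks up $\widetilde{A}^{|j_{i_q}|}_{|j_p|}$ rather than some twisted quantity.
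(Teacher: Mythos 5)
Your proof is correct and is exactly the computation the paper has in mind: the paper leaves this lemma as an exercise, noting only that it ``follows easily from the definition of $\{\widetilde{\sigma}^A_j\}$ and the property $\widetilde{\sigma}^A_{-j}=\widetilde{\sigma}^A_j$,'' and your argument fills in precisely those details (each $\widetilde{\sigma}^A_j$ preserves absolute values, so $|k_q|=|j_{i_q}|$ and $\widetilde{\sigma}^A_{k_q}=\widetilde{\sigma}^A_{j_{i_q}}$, after which the sign flips accumulate additively in $\Z_2$). No gaps.
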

   \begin{proof}
     The lemma follows easily from the definition of
     $\{ \widetilde{\sigma}^A_j\}_{j\in [\pm n]}$ in~\eqref{Equ:FPS-A} and the property
     $\widetilde{\sigma}^A_{-j}= \widetilde{\sigma}^A_j$ for $\forall\, j \in [\pm n]$.
     So we leave it as an exercise to the reader.
   \end{proof}
     \nn

   \begin{lem} \label{Lem:Component-3}
     For any proper face $f$ of $\mathcal{C}^n$, we can write
     $f = \mathbf{F}(j_1,\cdots, j_s)$ where $1\leq |j_1| < \cdots < |j_s| \leq n$.
     Then in $\mathcal{F}_A$,  $|\widehat{f}| = 2^d$ where
     $d=\mathrm{rank}_{\Z_2}( \widetilde{A}^{|j_1|\cdots  |j_s|}_{|j_1|\cdots |j_s|})$.
   \end{lem}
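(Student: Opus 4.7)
The plan is to deduce the lemma as a direct consequence of Lemma~\ref{Lem:Component-2}, by recognizing the sign-vectors it produces as elements of the $\Z_2$-row space of the principal minor matrix $M := \widetilde{A}^{|j_1|\cdots|j_s|}_{|j_1|\cdots|j_s|}$.

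First I would recall from Corollary~\ref{Cor:Number-Components} that every component of $\widehat{f}$ arises as $\mathbf{F}(j_1,\ldots,j_s)^{\Sigma}$ for some $\Sigma \subset \{1,\ldots,s\}$. By Lemma~\ref{Lem:Component-2}, such a component is strongly equal to $\mathbf{F}((-1)^{\varepsilon_1}j_1,\ldots,(-1)^{\varepsilon_s}j_s)$, where $v(\Sigma) := (\varepsilon_1,\ldots,\varepsilon_s) \in \Z_2^s$ is precisely the sum (over $\Z_2$) of those rows of $M$ whose indices lie in $\Sigma$. Thus $\Sigma \mapsto v(\Sigma)$ maps $2^{\{1,\ldots,s\}}$ onto the $\Z_2$-row space $\mathrm{Row}(M)$ of $M$, and the image of this map exhausts all components of $\widehat{f}$.

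Next I would argue that two subsets $\Sigma,\Sigma'$ produce the same component if and only if $v(\Sigma) = v(\Sigma')$. One direction is clear from the formula. For the converse, note that the indices $|j_1| < \cdots < |j_s|$ are pairwise distinct positive integers, so a normal form $\mathbf{F}(\epsilon_1 j_1,\ldots,\epsilon_s j_s)$ with $\epsilon_p \in \{\pm 1\}$ determines the signed entries $\{\epsilon_p j_p\}$ as an unordered set, and two such unordered sets coincide only when the sign patterns $(\epsilon_1,\ldots,\epsilon_s)$ agree. Consequently the number of distinct components of $\widehat{f}$ equals the cardinality of the image $v(2^{\{1,\ldots,s\}}) = \mathrm{Row}(M)$.

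Finally, since $\mathrm{Row}(M)$ is a $\Z_2$-vector subspace of $\Z_2^s$ of dimension $\mathrm{rank}_{\Z_2}(M) = d$, its cardinality is $2^d$, giving $|\widehat{f}| = 2^d$ as claimed. The only nontrivial observation is the bijection between components and row-space elements, and this reduces cleanly to the injectivity statement in the preceding paragraph; no deeper obstacle is expected, since all the hard combinatorial bookkeeping has already been packaged into Lemma~\ref{Lem:Component-2} and the ordering convention $|j_1| < \cdots < |j_s|$.
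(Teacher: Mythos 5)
Your proposal is correct and follows essentially the same route as the paper: identify the components with the sign-vectors from Lemma~\ref{Lem:Component-2}, observe that these range over the $\Z_2$-span of the rows of $\widetilde{A}^{|j_1|\cdots|j_s|}_{|j_1|\cdots|j_s|}$, and count $2^d$ elements there. Your explicit injectivity check (distinct sign vectors give distinct normal forms because the $|j_p|$ are pairwise distinct) is a detail the paper leaves implicit, but it is the same argument.
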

   \begin{proof}
       By our discussion in section~\ref{Section4}, all the components in $\widehat{f}$
      are
       $$\{ \mathbf{F}(j_1,\cdots, j_s)^{\Sigma} \, ; \, \Sigma \subset \{ 1,\cdots, s \}
      \}.$$
     Notice that when $\Sigma$ runs over all the subsets of $\{ 1,\cdots, s \}$,
    the corresponding vector $(\varepsilon_1,\cdots, \varepsilon_s)$ in
    Lemma~\ref{Lem:Component-2} will take all the values in
   the image space of $\widetilde{A}^{|j_1|\cdots  |j_s|}_{|j_1|\cdots |j_s|}$, where
    we consider $\widetilde{A}^{|j_1|\cdots  |j_s|}_{|j_1|\cdots |j_s|}$ as a linear
   transformation on $(\Z_2)^s$.
   Then it is easy to see that the number of components in the face family
   $\widehat{f}$ equals the number of elements in the image space
   of $\widetilde{A}^{|j_1|\cdots  |j_s|}_{|j_1|\cdots |j_s|}$.
  So the lemma follows.
   \end{proof}
   \nn
   \begin{rem}
     For an arbitrary binary square matrix $A$ with zero diagonal,
     the facets-pairing structure
      $\mathcal{F}_A$ may not be strong (see Theorem~\ref{thm:Main-5}).
      But Lemma~\ref{Lem:Component-3} shows that the number of components in
      any face family in $\mathcal{F}_A$ is always a power of $2$. So this result
       does not follow from Theorem~\ref{thm:Component-1} in which the ``strongness''
       of the facets-pairing structure has to be assumed.
   \end{rem}
   \nn

   The following theorem can be thought of as a geometric
    interpretation of Bott matrices.\nn

  \begin{thm} \label{thm:Main-4}
    For an $n\times n$ binary matrix $A$ with zero diagonal ($n\geq 2$),
  the facets-pairing structure $\mathcal{F}_A$ on $\mathcal{C}^n$
  is perfect if and only if $A$ is a Bott matrix.
  \end{thm}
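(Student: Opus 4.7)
The plan is to convert perfectness of $\mathcal{F}_A$ into a statement about principal minors of the matrix $\widetilde{A} = A + I_n$ over $\Z_2$, and then interpret that statement graph-theoretically. By Lemma \ref{Lem:Component-3}, for any face $\mathbf{F}(j_1,\ldots,j_s)$ with $1 \leq |j_1| < \cdots < |j_s| \leq n$ the face family has $|\widehat{\mathbf{F}}(j_1,\ldots,j_s)| = 2^s$ if and only if $\mathrm{rank}_{\Z_2}(\widetilde{A}^{|j_1|\cdots |j_s|}_{|j_1|\cdots |j_s|}) = s$. Facets are automatically perfect since $\omega(j) = -j \neq j$, so $\mathcal{F}_A$ is perfect if and only if every principal minor matrix of $\widetilde{A}$ is nonsingular over $\Z_2$, i.e.\ every principal minor equals $1 \in \Z_2$.

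For the ``$\Leftarrow$'' direction, suppose $A$ is a Bott matrix and pick a permutation matrix $P$ with $P^{-1}AP$ strictly upper triangular. Then $P^{-1}\widetilde{A}P$ is upper triangular with $1$'s on the diagonal. Using the entry identity $(P^{-1}\widetilde{A}P)^i_j = \widetilde{A}^{\pi(i)}_{\pi(j)}$ already invoked in the proof of Theorem \ref{thm:F_A-Classify}, simultaneous row-and-column permutation merely relabels principal minors, so the principal minors of $\widetilde{A}$ coincide (as a multiset) with those of $P^{-1}\widetilde{A}P$. Any principal submatrix of an upper triangular matrix with unit diagonal is itself upper triangular with unit diagonal and so has determinant $1$; this proves perfectness.

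For the ``$\Rightarrow$'' direction I argue by contrapositive. Encode $A$ as the adjacency matrix of a directed graph $G(A)$ on $\{1,\ldots,n\}$ by putting $i \to j$ whenever $A^i_j = 1$; since $A$ has zero diagonal there are no self-loops, and $A$ is a Bott matrix precisely when $G(A)$ admits a topological ordering, i.e.\ is acyclic. If $A$ is not a Bott matrix, choose a directed cycle $v_1 \to v_2 \to \cdots \to v_s \to v_1$ of minimum length $s \geq 2$. Minimality forces the induced subgraph on $\{v_1,\ldots,v_s\}$ to contain no additional edges: any chord $v_i \to v_j$ with $j \not\equiv i+1 \pmod s$ would yield the strictly shorter cycle $v_i \to v_j \to v_{j+1} \to \cdots \to v_i$. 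Consequently
\[
\widetilde{A}^{v_1\cdots v_s}_{v_1\cdots v_s} = I_s + C_s,
\]
where $C_s$ is the cyclic shift permutation matrix of order $s$. Every row of $I_s + C_s$ contains exactly two $1$'s, so the sum of all rows vanishes over $\Z_2$; hence the rows are linearly dependent and the determinant is $0 \in \Z_2$. By Lemma \ref{Lem:Component-3} the face family of $\mathbf{F}(v_1,\ldots,v_s)$ has fewer than $2^s$ components, so $\mathcal{F}_A$ is not perfect.

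The main obstacle is the converse direction, in particular the reduction from an arbitrary cycle in $G(A)$ to a chordless one via minimum-length selection; without this reduction the induced principal submatrix could contain unpredictable extra $1$'s and its determinant would not simplify cleanly. The forward direction, by contrast, is essentially a bookkeeping exercise once one notices that permutation conjugation preserves the collection of principal minors.
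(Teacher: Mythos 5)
Your proof is correct, and it shares the paper's first half verbatim: both arguments use Lemma~\ref{Lem:Component-3} to translate perfectness of $\mathcal{F}_A$ into the statement that every principal minor of $\widetilde{A}=A+I_n$ equals $1$ over $\Z_2$, and both handle the ``Bott $\Rightarrow$ perfect'' direction by observing that simultaneous permutation of rows and columns only relabels principal minors. Where you genuinely diverge is the hard direction. The paper closes the argument by citing the Masuda--Panov lemma (Lemma~\ref{Lem:MasudaPanov}), which says that a matrix over an integral domain with all proper principal minors equal to $1$ is permutation-conjugate either to a unipotent upper triangular matrix or to an explicit cyclic form. You instead give a self-contained combinatorial argument: non-Bott means the digraph $G(A)$ has a directed cycle, a minimum-length cycle is chordless, and the resulting principal submatrix $I_s+C_s$ has rows summing to zero over $\Z_2$, hence is singular. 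Your chord-elimination step is sound (an edge $v_i\to v_j$ with $j\not\equiv i+1$ closes up to a cycle of length $1+((i-j)\bmod s)<s$), and the only cosmetic gap is that the paper's notation $\widetilde{A}^{j_1\cdots j_s}_{j_1\cdots j_s}$ assumes increasing indices, so your submatrix is $I_s+C_s$ only up to a simultaneous row-and-column permutation --- which of course does not change the determinant. What each approach buys: the paper's route is shorter given the external lemma and yields the full dichotomy (the ``otherwise'' case of Masuda--Panov is precisely your chordless cycle); your route is elementary, avoids the citation, and makes explicit the equivalence between Bott matrices and acyclic digraphs that underlies the Choi--Masuda--Oum classification cited elsewhere in the paper.
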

  \begin{proof}
     For any face
    $f = \mathbf{F}(j_1,\cdots, j_s)$ where $1\leq |j_1| < \cdots < |j_s| \leq n$,
    by Lemma~\ref{Lem:Component-3}, the face family $\widehat{f}$ is perfect if and only if
     $\widetilde{A}^{|j_1|\cdots  |j_s|}_{|j_1|\cdots |j_s|}$ is full rank,
     or the determinant of $\widetilde{A}^{|j_1|\cdots  |j_s|}_{|j_1|\cdots |j_s|}$ over $\Z_2$ is $1$.
   Therefore, $\mathcal{F}_A$
    is perfect if and only if all the principal
   minors of $\widetilde{A}$ are $1$. Then the Lemma~\ref{Lem:MasudaPanov} below tells us
   that this is exactly equivalent to $\widetilde{A}$ being conjugate to a
   unipotent upper triangular matrix by a permutation matrix, which is also equivalent to
    $A$ being a Bott matrix.
  \end{proof}
  \nn

   \begin{lem}[Masuda and Panov~\cite{MasPan08}] \label{Lem:MasudaPanov}
     Let $R$ be a commutative integral domain with an identity
   element $1$, and let $M$ be an $n\times n$ matrix with entries in
   $R$. Suppose that every proper principal minor of $M$ is $1$. If
   $\det M =1$, then $M$ is conjugate by a permutation matrix to a
    unipotent upper triangular matrix, and otherwise to a matrix of the form:
   \[    \begin{pmatrix}
        1 &  b_1 &  0 & \cdots & 0   \\
        0 &  1   &  b_2 & \cdots & 0 \\
       \vdots & \vdots & \ddots & \ddots & \vdots \\
        0 & 0 &  \cdots & 1  & b_{n-1} \\
        b_n & 0  & \cdots & 0 & 1
   \end{pmatrix}  \ \text{where $b_i \neq 0$ for all $i=1,\cdots, n$.} \]
   \end{lem}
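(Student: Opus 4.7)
The plan is to translate the hypothesis into a statement about the support digraph of $M$ and then read off the two cases. Define $G$ to be the directed graph on $\{1,\ldots,n\}$ with an edge $i\to j$ whenever $i\neq j$ and $M^i_j\neq 0$. The $1\times 1$ principal minors being $1$ force every diagonal entry of $M$ to equal $1$; the $2\times 2$ principal minor on rows and columns $\{i,j\}$ has determinant $1 - M^i_j M^j_i$, so equating this to $1$ gives $M^i_j M^j_i = 0$, and because $R$ is an integral domain the graph $G$ has no $2$-cycles.

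The key computation is the determinant of a \emph{pure} directed $k$-cycle block. If vertices $v_1,\ldots,v_k$ carry a cycle $v_1\to v_2\to\cdots\to v_k\to v_1$ with weights $a_i = M^{v_i}_{v_{i+1}}\neq 0$ and no other edges among them, then expanding the corresponding principal submatrix along its first column yields $1 + (-1)^{k+1}a_1 a_2\cdots a_k$. Since $R$ is an integral domain and each $a_i\neq 0$, this value differs from $1$.

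Next I would split into cases according to whether $G$ contains any directed cycle. If $G$ is acyclic, a topological ordering of the vertices realised by a permutation matrix conjugates $M$ into an upper triangular matrix with $1$s on the diagonal, i.e.\ into the unipotent upper triangular form claimed, and necessarily $\det M = 1$. If $G$ does contain a cycle, take a shortest one $C$, of length $k$, on a vertex set $S$. A chord $v_i\to v_j$ within $S$ would produce a strictly shorter directed cycle, contradicting the minimality of $C$, and a reverse edge $v_{i+1}\to v_i$ is forbidden by the no-$2$-cycle condition obtained in the first paragraph. Hence the principal submatrix on $S$ is exactly the pure $k$-cycle block whose determinant was computed above, and this determinant is not $1$. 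If $k<n$ this would be a proper principal minor, contradicting the hypothesis. Therefore $k=n$, the shortest cycle is Hamiltonian, and after conjugation by the permutation matrix that sends $1,\ldots,n$ to $v_1,\ldots,v_n$, the matrix $M$ takes exactly the cyclic form displayed in the statement, with $b_i = a_i\neq 0$.

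The main delicate step is the minimality argument: one must verify carefully that a shortest directed cycle in $G$ is both chord-free and antiparallel-free, so that the induced principal submatrix on its vertex set really is the clean $k$-cycle block whose determinant we computed. Once this is in place, the dichotomy between $\det M=1$ and $\det M\neq 1$ emerges automatically, since the former occurs exactly when $G$ is acyclic (Case~A) and the latter exactly when $G$ contains an $n$-cycle forcing the cyclic normal form (Case~B). Everything else is either the direct determinant calculation or the elementary observation that a DAG admits a topological sort realised by a permutation matrix.
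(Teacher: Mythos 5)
The paper does not prove this lemma --- it is quoted from Masuda and Panov~\cite{MasPan08} --- so there is no in-paper argument to compare against; judged on its own, your digraph proof is correct. The three pillars all hold: the $1\times 1$ and $2\times 2$ principal minors force unit diagonal and $M^i_jM^j_i=0$ (hence, over a domain, no $2$-cycles); the determinant of a chordless $k$-cycle block is $1+(-1)^{k+1}a_1\cdots a_k\neq 1$ since $a_1\cdots a_k\neq 0$ in a domain; and minimality of a shortest cycle correctly rules out chords and antiparallel edges, so that cycle's vertex set carries exactly the pure cycle block, forcing $k=n$, while acyclicity of $G$ gives the unipotent triangular form via a topological sort. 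The only caveat worth recording is that the $1\times1$ (resp.\ $2\times2$) minors are \emph{proper} only when $n\geq 2$ (resp.\ $n\geq 3$), so the cases $n\leq 2$ should be disposed of by direct inspection; this does not affect the substance of the argument.
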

   \n

  For a binary square matrix $A$ with zero diagonal,
  we may also ask when the
  facets-pairing structure $\mathcal{F}_A$ on $\mathcal{C}^n$ is strong.
  Note that it is
  not so easy to check the strongness of a facets-pairing structure
   directly from the definition.
  But for the $\mathcal{F}_A$ here, we have a very simple test stated in the
  following theorem. First, we introduce some notions to be used in our argument.
  \vskip .2cm

   \begin{defi}[Extended vector, Reduced vector]
       Suppose $M$ is an $m\times n$ matrix over a field and
    $M'$ is a submatrix of $M$ formed by a set of
    column vectors of $M$. For any $1\leq i \leq m$, let
    $\alpha_i$ and $\alpha'_i$ be the $i$-th row vector of
    $M$ and $M'$ respectively. Then we call $\alpha_i$ the \textit{extended
    vector} of $\alpha'_i$ in $M$, and call $\alpha'_i$ the \textit{reduced vector} of
    $\alpha_i$ in $M'$.
   \end{defi}

  \begin{thm} \label{thm:Main-5}
     For a binary square matrix $A$ with zero diagonal,
   the facets-pairing structure $\mathcal{F}_A$ on $\mathcal{C}^n$ is strong
  if and only if  for any $1\leq j_1 < \cdots < j_s \leq n$,
   $\mathrm{rank}_{\Z_2}(\widetilde{A}^{j_1\cdots  j_s}_{j_1\cdots j_s}) =
      \mathrm{rank}_{\Z_2}(\widetilde{A}^{j_1\cdots j_s})$.
  \end{thm}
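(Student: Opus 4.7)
My plan is to reduce the strongness of $\mathcal{F}_A$ to a purely linear-algebraic statement over $\Z_2$, using the explicit description of the structure maps from Section~\ref{Section4} together with Lemma~\ref{Lem:Component-2}. Recall from the end of Section~\ref{Section4} that strongness of a regular facets-pairing structure on $\mathcal{C}^n$ amounts to verifying, whenever two valid compositions produce the same face from some normal form $\mathbf{F}(j_1,\cdots,j_s)$, the two conditions (i) strongly-equal normal forms, and (ii) equal composite permutations on $[\pm n]$.

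I would first handle condition (i), which turns out to be automatic for $\mathcal{F}_A$. Indeed, by Lemma~\ref{Lem:Component-2}, every component of $\widehat{\mathbf{F}}(j_1,\cdots,j_s)$ (with $|j_1|<\cdots<|j_s|$) has the form $\mathbf{F}((-1)^{\varepsilon_1}j_1,\cdots,(-1)^{\varepsilon_s}j_s)$ for a uniquely determined sign vector $(\varepsilon_1,\cdots,\varepsilon_s)$; since the indices $|j_p|$ are pairwise distinct, set equality of two such normal forms forces term-by-term equality, hence strong equality. So the only issue for $\mathcal{F}_A$ is condition (ii).

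Next I would translate condition (ii) explicitly. From~\eqref{Equ:FPS-A} one checks $\widetilde{\sigma}^A_j(k) = (-1)^{\widetilde{A}^{|j|}_{|k|}}\cdot k$ for all $j,k\in[\pm n]$ (including $k=\pm j$, since $\widetilde{A}^{|j|}_{|j|}=1$). Therefore, if $(k_1,\cdots,k_r)$ is the derived sequence of an irreducible $(i_1,\cdots,i_r)$ from $(j_1,\cdots,j_s)$, the composite $\widetilde{\sigma}^A_{k_r}\circ\cdots\circ\widetilde{\sigma}^A_{k_1}$ acts on $k\in[\pm n]$ by multiplication by $(-1)^{\sum_{p\in\Sigma}\widetilde{A}^{|j_p|}_{|k|}}$, where $\Sigma=\{i_1,\cdots,i_r\}$; in particular this composite depends only on $\Sigma$ and on the row-sum $\sum_{p\in\Sigma}\alpha_p$, where $\alpha_p$ denotes the $|j_p|$-th row of $\widetilde{A}$, i.e.\ the $p$-th row of the extended matrix $\widetilde{A}^{|j_1|\cdots|j_s|}$. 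Meanwhile, again by Lemma~\ref{Lem:Component-2}, $\Sigma$ and $\Sigma'$ produce the same component of $\widehat{\mathbf{F}}(j_1,\cdots,j_s)$ iff the analogous row-sums of the \emph{reduced} matrix $\widetilde{A}^{|j_1|\cdots|j_s|}_{|j_1|\cdots|j_s|}$ agree.

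Putting these together, strongness of $\mathcal{F}_A$ is exactly the implication
\[
  \sum_{p\in T}\alpha'_p = 0\ \Longrightarrow\ \sum_{p\in T}\alpha_p=0,\qquad \forall\, T\subset\{1,\cdots,s\},
\]
where $\alpha'_p$ is the reduced vector of $\alpha_p$ in $\widetilde{A}^{|j_1|\cdots|j_s|}_{|j_1|\cdots|j_s|}$. Since the reverse implication is automatic (the reduced matrix is a column-submatrix of the extended one), this is equivalent to the equality of the left-null spaces of the two matrices over $\Z_2$, hence by rank--nullity to
\[
  \mathrm{rank}_{\Z_2}(\widetilde{A}^{|j_1|\cdots|j_s|}_{|j_1|\cdots|j_s|}) = \mathrm{rank}_{\Z_2}(\widetilde{A}^{|j_1|\cdots|j_s|}).
\]
Ranging over all $(j_1,\cdots,j_s)$ with $1\leq|j_1|<\cdots<|j_s|\leq n$ (which is the same as all choices $1\leq j_1<\cdots<j_s\leq n$) gives the stated criterion. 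The only real subtlety, and the step I would take most care with, is justifying that the composite permutation reduces to a single product indexed by the set $\Sigma$ (after collapsing repeated indices via $\widetilde{\sigma}^A_j\circ\widetilde{\sigma}^A_j=\mathrm{id}$, cf.\ Lemma~\ref{Lem:Order-Indep}); everything else is bookkeeping between the definitions and Lemma~\ref{Lem:Component-2}.
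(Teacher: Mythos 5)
Your proposal is correct and follows essentially the same route as the paper: it reduces to the Section~\ref{Section4} characterization of strongness via conditions (i) and (ii), observes that (i) is automatic for $\mathcal{F}_A$ by Lemma~\ref{Lem:Component-2}, translates (ii) into agreement of row-sums of the extended matrix $\widetilde{A}^{|j_1|\cdots|j_s|}$, and concludes with the rank equality. The only (cosmetic) difference is the final linear-algebra step, where you argue directly via equality of left-null spaces and rank--nullity over $\Z_2$ instead of invoking the paper's Lemma~\ref{Lem:Rank_SubMatrix}; both are valid.
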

  \begin{proof}
   For any face $\mathbf{F}(j_1,\cdots, j_s)$ of $\mathcal{C}^n$,
    let $(k_1,\cdots, k_m)$ and $(k'_1,\cdots, k'_r)$
   be the derived sequences of two sequences
   $(i_1,\cdots, i_m)$ and $(i'_1,\cdots, i'_{r})$ from
   $(j_1,\cdots, j_s)$ respectively in $\mathcal{F}_A$.
    By Lemma~\ref{Lem:Component-0},
   we can assume $(i_1,\cdots, i_m)$ and $(i'_1,\cdots, i'_{r})$ are
   both irreducible sequences. Then by our discussion in
   section~\ref{Section4}, $\mathcal{F}_A$ is strong if and only if
   whenever
   $\mathbf{F}_{k_1,\cdots, k_m}(j_1,\cdots, j_s)
   = \mathbf{F}_{k'_1,\cdots, k'_r}(j_1,\cdots, j_s)$, we must have
   \n
    \begin{enumerate}
      \item[(i)] $\mathbf{F}_{k_1,\cdots, k_m}(j_1,\cdots, j_s)
      \equiv \mathbf{F}_{k'_1,\cdots, k'_r}(j_1,\cdots, j_s)$, and
      \n

      \item[(ii)] $\widetilde{\sigma}^A_{k_m} \circ\cdots\circ\widetilde{\sigma}^A_{k_1} =
        \widetilde{\sigma}^A_{k'_r} \circ\cdots\circ
        \widetilde{\sigma}^A_{k'_1}$ as a permutation on $[\pm n]$.
    \end{enumerate}
 Without loss of generality, we can assume $1\leq |j_1| < \cdots < |j_s| \leq n$
 here.\n

  By~\eqref{Equ:Face-matrix-repre},
  $\mathbf{F}_{k_1,\cdots, k_m}(j_1,\cdots, j_s)= \mathbf{F}_{k'_1,\cdots, k'_r}(j_1,\cdots, j_s)$
   if and only if  the sum of the $i_1$-th, $\cdots, i_m$-th row vectors
  and the sum of the $i'_1$-th, $\cdots, i'_r$-th row vectors of
   $\widetilde{A}^{|j_1|\cdots  |j_s|}_{|j_1|\cdots |j_s|}$ coincide. So
  in this case,
  $\mathbf{F}_{k_1,\cdots, k_m}(j_1,\cdots, j_s)= \mathbf{F}_{k'_1,\cdots, k'_r}(j_1,\cdots, j_s)$
  will imply $\mathbf{F}_{k_1,\cdots, k_m}(j_1,\cdots, j_s)
      \equiv \mathbf{F}_{k'_1,\cdots, k'_r}(j_1,\cdots, j_s)$ automatically.
      But the (ii) says more. Indeed, (ii) requires that for
      $\forall\, l \neq \pm j_1,\cdots, \pm j_s$,
    \begin{equation} \label{Equ:Main-5_1}
      \widetilde{ \tau}^A_{k_m}\circ \cdots \circ \widetilde{\tau}^A_{k_1} ( \mathbf{F}(j_1,\cdots, j_s,l) )
        \equiv  \widetilde{\tau}^A_{k'_r}\circ\cdots\circ
        \widetilde{\tau}^A_{k'_1} (\mathbf{F}(j_1,\cdots, j_s,l)),
      \end{equation}
     Notice that $(k_1,\cdots, k_m)$ is the derived sequence of $(i_1,\cdots, i_m)$
     from $(j_1,\cdots, j_s,l)$ too. Similarly, $(k'_1,\cdots,
     k'_r)$ is the derived sequence of $(i'_1,\cdots, i'_r)$
     from $(j_1,\cdots, j_s,l)$.
     So~\eqref{Equ:Main-5_1} is equivalent to:
      \begin{equation} \label{Equ:Main-5_2}
       \mathbf{F}(j_1,\cdots, j_s,l)^{\Sigma}
       \equiv  \mathbf{F}(j_1,\cdots, j_s,l)^{\Sigma'},
        \ \forall\, l \neq \pm j_1,\cdots, \pm j_s,
      \end{equation}
        where $\Sigma = \{ i_1,\cdots, i_m \}$ and $\Sigma'= \{ i'_1,\cdots,
        i'_r\}$.
   By the formula in Lemma~\ref{Lem:Component-2}, the equation in~\eqref{Equ:Main-5_2}
   implies that
    the sum of $i_1$-th, $\cdots, i_m$-th row vectors
  and sum of $i'_1$-th, $\cdots, i'_r$-th row vectors of the matrix
   $\widetilde{A}^{|j_1|\cdots  |j_s|}$
  coincide (recall that
   $\widetilde{A}^{|j_1|\cdots |j_s|}$ is the submatrix of $\widetilde{A}$
   formed by the $|j_1|$-th, $\cdots, |j_s|$-th row vectors of
   $\widetilde{A}$). \n

    So the condition that $\mathcal{F}_A$ is strong is equivalent to the
    condition on $\widetilde{A}$ that: for any
    $1\leq j_1 < \cdots < j_s \leq n$,
    a set of row vectors of $\widetilde{A}^{j_1\cdots  j_s}_{j_1\cdots j_s}$
    are linearly dependent over $\Z_2$ implies that their extended row vectors in
    $\widetilde{A}^{j_1\cdots j_s}$ are also linearly
    dependent over $\Z_2$. By Lemma~\ref{Lem:Rank_SubMatrix} below, this condition is exactly
     equivalent to
     $\mathrm{rank}_{\Z_2}(\widetilde{A}^{j_1\cdots  j_s}_{j_1\cdots j_s}) =
      \mathrm{rank}_{\Z_2}(\widetilde{A}^{j_1\cdots j_s})$.
    \end{proof}
   \n

  \begin{lem} \label{Lem:Rank_SubMatrix}
    Suppose $M$ is an $m\times n$ matrix over a field $\F$ and
    $M'$ is a submatrix of $M$ formed by a set of
    column vectors of $M$. Then the following statements are equivalent.
    \begin{enumerate}
      \item[(i)] If a set of row vectors of $M'$ are linearly dependent over $\F$,
           then their extended row vectors in $M$ are also linearly
           dependent over $\F$. \n
      \item[(ii)] $\mathrm{rank}_{\F}(M) = \mathrm{rank}_{\F}(M')$;
    \end{enumerate}
  \end{lem}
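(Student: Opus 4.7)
The plan is to prove (i) $\Leftrightarrow$ (ii) by exploiting the fact that passing from $M$ to $M'$ is just projection onto the chosen columns, and such a projection automatically preserves every linear relation (but only reflects them back when ranks coincide).

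First I would record the ``trivial'' direction that holds in either case: if row vectors $r_{i_1},\ldots,r_{i_k}$ of $M$ satisfy $\sum c_p r_{i_p}=0$, then restricting to the columns selected by $M'$ gives $\sum c_p r'_{i_p}=0$. In particular $\mathrm{rank}_{\F}(M')\leq \mathrm{rank}_{\F}(M)$ always, and the map sending a linear relation on rows of $M$ to the corresponding relation on rows of $M'$ is well defined.

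For (i) $\Rightarrow$ (ii), I would take a maximal linearly independent set of rows of $M$, say of size $r=\mathrm{rank}_{\F}(M)$, and consider their reduced vectors in $M'$. If those reduced vectors were dependent, condition (i) would force the chosen rows of $M$ themselves to be dependent, contradicting independence. Hence the reduced vectors are independent, giving $\mathrm{rank}_{\F}(M')\geq r$, and combining with the trivial inequality yields (ii).

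For (ii) $\Rightarrow$ (i), the key step is to show the kernels of the two ``dependency'' maps agree. Define $R:\F^m\to\F^n$ and $R':\F^m\to\F^k$ (where $k$ is the number of columns of $M'$) by $R(c_1,\ldots,c_m)=\sum c_i r_i$ and $R'(c_1,\ldots,c_m)=\sum c_i r'_i$. By the observation above, $\ker R\subseteq \ker R'$. The rank-nullity theorem gives $\dim\ker R=m-\mathrm{rank}_{\F}(M)$ and $\dim\ker R'=m-\mathrm{rank}_{\F}(M')$, which are equal by (ii), so the inclusion is an equality. Consequently every linear relation among rows of $M'$ is already a linear relation among the corresponding rows of $M$, which is exactly (i). The entire argument is short and elementary; I do not anticipate a real obstacle, only the need to phrase the ``reduction is a projection'' observation cleanly so that both implications read off from the single identity $\ker R=\ker R'$.
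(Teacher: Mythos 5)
Your proposal is correct. The first implication, (i) $\Rightarrow$ (ii), is essentially the paper's argument: both pass to the contrapositive of (i), apply it to a maximal independent set of rows of $M$, and combine the resulting inequality $\mathrm{rank}_{\F}(M)\leq \mathrm{rank}_{\F}(M')$ with the trivial reverse inequality. For (ii) $\Rightarrow$ (i), however, you take a genuinely different route. The paper argues by contradiction: it supposes some independent rows of $M$ have dependent reductions, enlarges them to a maximal independent set of $M$, uses the rank equality to find a row of $M'$ outside the span of the reduced vectors, and lifts that row to contradict maximality. You instead introduce the relation maps $R(c)=\sum c_i r_i$ and $R'(c)=\sum c_i r'_i$, note $\ker R\subseteq \ker R'$ since deleting columns preserves relations, and conclude $\ker R=\ker R'$ from rank--nullity together with (ii). Your version is cleaner and proves something slightly stronger: not merely that dependence of reduced rows forces dependence of the extended rows, but that the \emph{same} coefficient vector witnessing a relation in $M'$ already gives a relation in $M$. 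The paper's version is more elementary in that it only manipulates spanning sets, but it needs the small extra step of lifting non-membership in a span from $M'$ back to $M$, which your kernel identity subsumes. Both are complete proofs; yours packages the two directions around the single identity $\ker R=\ker R'$, which is a nice economy.
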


\begin{proof}
    (i) $\Rightarrow$ (ii): The condition in (i) is equivalent to
    say that a set of row vectors of $M$ are linearly
    independent will force their reduced row vectors in $M'$
    to be also linearly independent. Then we must have $\mathrm{rank}_{\F}(M) \leq
     \mathrm{rank}_{\F}(M')$.
     But since $M'$ is a submatrix of $M$, $\mathrm{rank}_{\F}(M') \leq
     \mathrm{rank}_{\F}(M)$. So we get (ii). \nn

   (ii) $\Rightarrow$ (i): Assume $ \alpha_{i_1},\cdots, \alpha_{i_r}$
   are a set of linearly independent row vectors of $M$, but
   their reduced row vectors $\alpha'_{i_1},\cdots, \alpha'_{i_r}$ in $M'$
   are not linearly independent.
   We can add some extra row vectors $\alpha_{i_{r+1}},\cdots, \alpha_{i_k}$ of $M$
    to $\alpha_{i_1},\cdots, \alpha_{i_r}$ so that
    $\alpha_{i_1},\cdots, \alpha_{i_r}, \alpha_{i_{r+1}},\cdots, \alpha_{i_k}$
    form a set of maximally linearly independent row vectors of $M$.
    Note that $k=\mathrm{rank}_{\F}(M)$.
   But by our assumption, their
    reduced row vectors
    $\alpha'_{i_1},\cdots, \alpha'_{i_r}, \alpha'_{i_{r+1}},\cdots, \alpha'_{i_k}$
    in $M'$ are not linearly independent. Since
     $\mathrm{rank}_{\F}(M) = \mathrm{rank}_{\F}(M')$, there must exist another row
     vector $\alpha'_l$ of $M'$ which is linearly independent from
     $\alpha'_{i_1},\cdots, \alpha'_{i_r}, \alpha'_{i_{r+1}},\cdots,
     \alpha'_{i_k}$. Then $\alpha_l$ --- the extended row vector of $\alpha'_l$ in $M$
    --- is linearly independent from
     $\alpha_{i_1},\cdots, \alpha_{i_r}, \alpha_{i_{r+1}},\cdots,
     \alpha_{i_k}$. But this contradicts our assumption that
     $\alpha_{i_1},\cdots, \alpha_{i_r}, \alpha_{i_{r+1}},\cdots, \alpha_{i_k}$ are
     a set of maximally linearly independent row vectors of $M$.
  \end{proof}

    \n

       For a binary square matrix $A$ with zero diagonal,
    if the facets-pairing structure $\mathcal{F}_A$ on $\mathcal{C}^n$ is
    perfect, then Theorem~\ref{thm:Perfect-Strong} says that
     $\mathcal{F}_A$ must be strong. In fact, we can derive this result
    directly from Theorem~\ref{thm:Main-4} and Theorem~\ref{thm:Main-5}.
    If $\mathcal{F}_A$ is perfect, Theorem~\ref{thm:Main-4} says that
    for any $1\leq j_1 < \cdots < j_s \leq n$, the matrix
    $\widetilde{A}^{j_1\cdots  j_s}_{j_1\cdots j_s}$
     should be non-degenerate.
    This implies $\mathrm{rank}_{\Z_2}(\widetilde{A}^{j_1\cdots  j_s}_{j_1\cdots j_s})
    = \mathrm{rank}_{\Z_2}(\widetilde{A}^{j_1\cdots j_s})= s$.  Then by
   Theorem~\ref{thm:Main-5}, $\mathcal{F}_A$ is strong.
   \nn

  \begin{exam} \label{Exam:Strang-Strong}
    The facets-pairing structure $\mathcal{F}_A$ on
      $\mathcal{C}^3$ corresponding to the following binary matrix $A$ is
      strong. But $\mathcal{F}_A$ is not perfect since $\widetilde{A} = A + I_3$ is
      a degenerate matrix (over $\Z_2$). In fact, there are two $0$-dimensional face
      families in $\mathcal{F}_A$, each of which consists of four vertices of
      $\mathcal{C}^3$. Note that all the $1$-dimensional face families in $\mathcal{F}_A$ are
      perfect though.
       \[  A =   \begin{pmatrix}
            0  &  0 & 1   \\
            1   &  0  & 0  \\
            0   &  1  & 0
        \end{pmatrix},  \quad  \widetilde{A} =   \begin{pmatrix}
             1 &  0 & 1   \\
            1   &  1  & 0  \\
            0   &  1  & 1
        \end{pmatrix}.
        \]
  \end{exam}
    \nn

 \begin{exam}
      In Example~\ref{Exam:Strange}, the facets-pairing structure defined on
      $\mathcal{C}^3$ is equivalent to $\mathcal{F}_A$ for the following binary matrix $A$.
      \[  A =   \begin{pmatrix}
            0  &  0 & 0   \\
            0   &  0  & 1  \\
            1   &  1  & 0
        \end{pmatrix}, \quad  \widetilde{A} =   \begin{pmatrix}
            1  &  0 & 0   \\
            0   &  1  & 1  \\
            1   &  1  & 1
        \end{pmatrix}.
     \]
     Observe that the two submatrices of $\widetilde{A} = I_3 +A$ below have different rank.
     \[ \widetilde{A}^{23}_{23} =
     \begin{pmatrix}
              1  & 1  \\
               1  & 1
        \end{pmatrix},\quad
    \widetilde{A}^{2 3} =
       \begin{pmatrix}
            0   &  1  & 1  \\
            1   &  1  & 1
        \end{pmatrix}. \]
     So by Theorem~\ref{thm:Main-5}, we conclude that $\mathcal{F}_A$ is not
     strong, which agrees with our analysis in Example~\ref{Exam:Strange}.\\
  \end{exam}

   \section{Seal Space of $\mathcal{F}_A$ and glue-back
   Construction} \label{Section7}

       It is well-known that any $n$-dimensional real Bott manifold
        is a small cover over the $n$-dimensional cube. Recall that
        an $n$-dimensional small cover $M^n$ over a simple polytope $P^n$
        is a closed connected $n$-manifold
        with a locally standard $(\Z_2)^n$-action whose orbit space
        is $P^n$ (see~\cite{DaJan91}). The ``locally standard'' here means
        that locally the $(\Z_2)^n$-action is
          equivariantly homeomorphic to a faithful representation of
        $(\Z_2)^n$ on $\R^n$.
        The locally standard $(\Z_2)^n$-action on $M^n$ determines a $(\Z_2)^n$-valued function
       $\lambda$ on the facets of $P^n$, i.e.
       $\lambda : \mathcal{S}_F(P^n) \rightarrow (\Z_2)^n$. The
        $\lambda$
        encodes the information of the isotropy subgroups of
      the non-free orbits of the $(\Z_2)^n$-action.
      In addition, the $\lambda$ is \textit{non-degenerate} at
      each vertex $v$ of $P^n$, which means that:
    if $F_1,\cdots, F_n$ are all the facets
      of $P^n$ meeting at $v$, $\{ \lambda (F_1) \cdots, \lambda(F_n) \} $
      form a basis of $(\Z_2)^n$.  We call $\lambda$ the \textit{characteristic function} of
     $M^n$ on $P^n$. Conversely,  it was shown
     in~\cite{DaJan91} that $M^n$ is equivariantly homeomorphic to
      a space $M(P^n,\lambda)$ with a canonical $(\Z_2)^n$-action
    defined below.\nn

      For any nice manifold with corners $W^n$ and a $(\Z_2)^m$-valued function
      $\mu$ on all the facets of $W^n$, i.e. $\mu: S_F(W^n) \rightarrow (\Z_2)^m$
  ($m$ may be different from $n$), we can define a space $M(W^n,\mu)$ as following.
       For any proper face $f$ of $W^n$, let $G_f$ be the
     subgroup of $(\Z_2)^m$ generated by the following set
    $$\{ \mu(F) \, ; \, F \
   \text{is any facet of $W^n$ with}\ F\supseteq f \} . $$
    For any $p\in W^n$, let $f(p)$ be the unique face
    of $W^n$ that contains $p$ in its relative interior.
   Then we can glue $2^m$ copies of $W^n$ according to the
   information of $\mu$ by:
       \begin{equation} \label{Equ:Glue-back}
      M(W^n, \mu):= W^n \times (\Z_2)^m \slash \sim
      \end{equation}
    where $(p,g) \sim (p',g')$ if and only if $p=p'$ and
    $g-g' \in G_{f(p)}$ (see~\cite{DaJan91} and~\cite{Yu2010}). We call
     $M(W^n, \mu)$ the \textit{glue-back construction} from $(W^n,\mu)$.
     Moreover,
    there is a natural action of $(\Z_2)^m$ on $M(W^n, \mu)$
    defined by:
     \begin{equation} \label{Equ:Natural-Action}
       g\cdot [(p, g_0)] = [(p,  g_0 + g)],\ \forall\, p\in W^n,\ \forall\,
    g,g_0 \in (\Z_2)^m.
     \end{equation}

   In this paper, we will always assume $M(W^n, \mu)$ being equipped with this
   $(\Z_2)^m$-action. The reader is
       referred to~\cite{Yu2010} for more general form of glue-back construction.
   In addition, the function $\mu$ is called \textit{non-degenerate at a face $f$} if
    $\mu(F_{i_1}), \cdots, \mu(F_{i_k})$ are linearly independent
   over $\Z_2$ where $F_{i_1},\cdots, F_{i_k}$ are all the facets of $W^n$ which contain $f$.
    And $\mu$ is called \textit{non-degenerate on $W^n$} if
    $\mu$ is non-degenerate at all faces of $W^n$.
     In particular, when $W^n$ is a simple polytope,
    the non-degeneracy of $\mu$ on $W^n$ is equivalent to the non-degeneracy of $\mu$
  at all vertices of $W^n$.\nn

  Since we mainly deal with simple polytopes in this paper, we
  make the following special definition for functions on the facets of a simple
  polytope.\nn

     \begin{defi}[Generalized Characteristic Function on a
     Simple Polytope] \label{Def:Generalized-Char-Func}
      For an $n$-dimensional simple polytope $P^n$, any $(\Z_2)^n$-valued function
       $\widetilde{\lambda}$ (may not be non-degenerate) on the facets of $P^n$ is called a
       \textit{generalized characteristic function} on $P^n$.
      \nn

       For a generalized characteristic function $\widetilde{\lambda}$ on $P^n$,
       if $\widetilde{\lambda}$ is not non-degenerate at some vertices of $P^n$,
        the $M(P^n,\widetilde{\lambda})$ may not be a closed manifold
      (see Example~\ref{Exam:Strang-Strong-2} and
          Lemma~\ref{Lem:General-Char-Func}) and the natural $(\Z_2)^n$-action on
      $M(P^n,\widetilde{\lambda})$ may not be locally standard.
      So in general, we call $M(P^n, \widetilde{\lambda})$ a \textit{real toric orbifold}.
     \end{defi}
     \n

        For any $A \in \mathfrak{B}(n)$, since $M(A)$
         is a small cover over an $n$-dimensional cube, there is a locally standard
        $(\Z_2)^n$-action on $M(A)$ whose orbit space is a cube.
        Now, let us see what this
        locally standard $(\Z_2)^n$-action on $M(A)$ looks like when we identify
       $M(A)$ with the seal space $Q^n_{\mathcal{F}_A}$, where $\mathcal{F}_A$ is
       the facets-pairing structure on $\mathcal{C}^n$ associated to $A$. \nn

        For any $1\leq i \leq n$, let $h_i : \mathcal{C}^n \rightarrow \mathcal{C}^n$
       be the homeomorphism which sends any point $(x_1,\cdots, x_n) \in \mathcal{C}^n$ to
       $(x_1,\cdots, x_{i-1}, -x_i, x_{i+1}, \cdots, x_n)$. Let
         $$H=\langle h_1, \cdots, h_n \rangle \cong (\Z_2)^n .$$
       Then $H$ is a subgroup of the symmetry group of
       $\mathcal{C}^n$. For any $n\times n$ binary matrix $A$ with zero diagonal,
       the action of $H$ on $\mathcal{C}^n$ obviously commutes with
       the $\widetilde{\tau}^A_1, \cdots, \widetilde{\tau}^A_n$ defined by $\mathcal{F}_A$.
       So we get an action of $H$ on the seal space $Q^n_{\mathcal{F}_A}$.
       It is easy to see that $Q^n_{\mathcal{F}_A} \slash H$
       is an $n$-dimensional cube, denoted by $\mathcal{C}^n_0$. We can identify
       $\mathcal{C}^n_0$ with the following subset of $\mathcal{C}^n$.
       \begin{equation} \label{Equ:Small_Cube}
        \mathcal{C}^n_0 = \{ (x_1,\cdots, x_n) \in \mathbb{R}^n
        \, | \, 0 \leq x_i\leq \frac{1}{4},\ 1\leq \forall\, i \leq n \}
        \end{equation}

      For each $1\leq j \leq n$, let $\overline{F}_j$ be the facet of
      $\mathcal{C}^n_0$ which lies in the coordinate hyperplane $\{ x_j =0 \}$.
      And let $\overline{F}^*_j$ be the opposite facet of $\overline{F}_j$ in $\mathcal{C}^n_0$.
      Recall that we use the bold symbol $\mathbf{F}(j)$ to denote the facets of $\mathcal{C}^n$,
      but $\overline{F}_j$ and $\overline{F}^*_j$ are not in bold form. \nn

       In addition, let $\{ e_1,\cdots, e_n \} $ be a linear basis of
      $(\Z_2)^n$. We define a $(\Z_2)^n$-valued function $\lambda_A$ on the set of
     facets of $\mathcal{C}^n_0$ by:
      \begin{align}
       \lambda_A(\overline{F}_j) & = e_j, \ 1\leq \forall\, j \leq n  \label{Equ:Char-Function-1}\\
       \lambda_A(\overline{F}^*_j) &= \sum^n_{j=1}\widetilde{A}^j_k \cdot e_k, \ 1\leq \forall\, j \leq n
       \label{Equ:Char-Function-2}
       \end{align}
        where $\widetilde{A} = A + I_n$.  Note that the $ \lambda_A(\overline{F}^*_j)$ can be identified with
         the $j$-th row vector of $\widetilde{A}$.
         In general, $\lambda_A$ might not be non-degenerate at all
      vertices of $\mathcal{C}^n_0$. So by our terms, $\lambda_A$ is only
    a generalized characteristic function on $\mathcal{C}^n_0$
    (see Remark~\ref{Def:Generalized-Char-Func}). \nn

     \begin{lem} \label{Lem:Equiv-Equivalence}
       For any $n\times n$ binary matrix $A$ with zero diagonal
     entries, the seal space
        $Q^n_{\mathcal{F}_A}$ is homeomorphic to $M(\mathcal{C}^n_0, \lambda_A)$,
        and the action of $H$ on
        $Q^n_{\mathcal{F}_A}$ can be identified with the natural $(\Z_2)^n$-action on
        $M(\mathcal{C}^n_0, \lambda_A)$.
     \end{lem}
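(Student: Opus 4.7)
The plan is to realize $Q^n_{\mathcal{F}_A}$ in two stages: first recover $\mathcal{C}^n$ itself from $\mathcal{C}^n_0$ via an intermediate glue-back construction, then impose the $\mathcal{F}_A$-identifications as a second glue-back. For the first stage, define an auxiliary generalized characteristic function $\lambda^0$ on $\mathcal{C}^n_0$ by $\lambda^0(\overline{F}_j) = e_j$ and $\lambda^0(\overline{F}^*_j) = 0$. The map $\Phi : \mathcal{C}^n_0 \times H \to \mathcal{C}^n$ sending $(x, h) \mapsto h(x)$ is surjective since $\mathcal{C}^n_0$ is a fundamental domain for the $H$-action on $\mathcal{C}^n$. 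A direct check shows that the stabilizer in $H$ of a point $p$ whose carrier face in $\mathcal{C}^n_0$ lies on exactly the facets $\overline{F}_{j_1}, \ldots, \overline{F}_{j_k}$ (and no $\overline{F}^*_i$) equals $\langle h_{j_1}, \ldots, h_{j_k}\rangle$, which is precisely $G_{f(p)}$ for $\lambda^0$. Hence $\Phi$ factors through the glue-back relation and yields an $H$-equivariant homeomorphism $M(\mathcal{C}^n_0, \lambda^0) \cong \mathcal{C}^n$.

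For the second stage, I would observe that $M(\mathcal{C}^n_0, \lambda_A)$ is obtained from $M(\mathcal{C}^n_0, \lambda^0)$ by imposing the additional relation $(p, g) \sim (p, g + \lambda_A(\overline{F}^*_j))$ whenever $p \in \overline{F}^*_j$. Transporting this through $\Phi$, the extra identification inside $\mathcal{C}^n$ reads $g(p) \sim \bigl(g + \lambda_A(\overline{F}^*_j)\bigr)(p) = g\cdot\bigl(\lambda_A(\overline{F}^*_j)\cdot p\bigr)$. The crucial computation is that, under the identification $H \cong (\Z_2)^n$ sending $h_k \leftrightarrow e_k$, the element $\lambda_A(\overline{F}^*_j) = \sum_k \widetilde{A}^j_k\, e_k$ corresponds to $\prod_k h_k^{\widetilde{A}^j_k}$, which is precisely the symmetry $\widetilde{\tau}^A_j$ associated to the signed permutation $\widetilde{\sigma}^A_j$ by the Fact in Section~3 (using $\widetilde{A}^j_j = 1$ to produce the sign change of the $j$-th coordinate). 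Restricted to $\mathbf{F}(j)$, this symmetry coincides with $\tau^A_j$ by formula \eqref{Equ:tau_j-Cube}. Therefore the stage-two identification exactly matches the seal-map relation $q \sim \tau^A_j(q)$ on $\mathbf{F}(j)$, and its $H$-translates sweep across all of $\mathbf{F}(\pm j)$ consistently, because $\tau^A_j$ is an element of the abelian group $H$ and $\omega(j) = -j$.

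The main obstacle is verifying that the two stacks of identifications assemble coherently at higher-codimension faces, so that the quotient maps agree. Concretely, for a face $\overline{f}$ of $\mathcal{C}^n_0$ lying on facets $\overline{F}_{i_1}, \ldots, \overline{F}_{i_k}, \overline{F}^*_{l_1}, \ldots, \overline{F}^*_{l_m}$, one must check that the subgroup $G_{\overline{f}}$ for $\lambda_A$ — namely the subgroup of $(\Z_2)^n$ generated by $e_{i_1}, \ldots, e_{i_k}$ together with the row vectors of $\widetilde{A}$ indexed by $l_1, \ldots, l_m$ — is exactly the subgroup of $H$ obtained by composing the first-stage stabilizer $\langle h_{i_1}, \ldots, h_{i_k}\rangle$ with the structure maps $\widetilde{\tau}^A_{l_1}, \ldots, \widetilde{\tau}^A_{l_m}$ applied successively. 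This is straightforward from the translation $\lambda_A(\overline{F}^*_l) \leftrightarrow \widetilde{\tau}^A_l$ above and the commutativity of $H$, but requires careful bookkeeping: on the seal-space side, the same subgroup governs which family points of a given $p \in \mathcal{C}^n$ get collapsed, by the analysis of $\mathbf{F}(j_1, \ldots, j_s)^{\Sigma}$ carried out in Section~\ref{Section4}.

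Equivariance of the resulting homeomorphism $M(\mathcal{C}^n_0, \lambda_A) \cong Q^n_{\mathcal{F}_A}$ with respect to the two $(\Z_2)^n$-actions is then automatic: both are induced by translation on the $(\Z_2)^n$-factor of $\mathcal{C}^n_0 \times (\Z_2)^n$ (respectively by the $H$-action on $\mathcal{C}^n$), and the whole construction is $H$-equivariant stage by stage. Combining the two stages gives the required $H$-equivariant homeomorphism $Q^n_{\mathcal{F}_A} \cong M(\mathcal{C}^n_0, \lambda_A)$.
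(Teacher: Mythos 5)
Your proposal is correct and follows essentially the same route as the paper: both first reassemble $\mathcal{C}^n$ from the $2^n$ copies of $\mathcal{C}^n_0$ by gluing along the facets $\overline{F}_1,\cdots,\overline{F}_n$, and then identify the remaining gluings along the $\overline{F}^*_j$'s (governed by the row vectors of $\widetilde{A}$) with the structure maps $\tau^A_j$ of $\mathcal{F}_A$ via formula~\eqref{Equ:tau_j-Cube}. Your formalization through the auxiliary function $\lambda^0$ and the coherence check at higher-codimension faces only makes explicit what the paper's proof leaves implicit.
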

     \begin{proof}
     Notice that $\mathcal{C}^n$ is divided into $2^n$ small cubes
     of the same size by the $n$ coordinate hyperplanes of $\R^n$.
     The $\mathcal{C}^n_0$ defined above is just one of them. On the other hand,
     in the definition of $M(\mathcal{C}^n_0, \lambda_A)$, if we
    only glue the facets $\overline{F}_1,\cdots, \overline{F}_n$ in each $\mathcal{C}^n_0 \times \{ g\} ,g \in (\Z_2)^n$
    first according to the rule in~\eqref{Equ:Glue-back}, we will get a
    big cube which can be identified with the $\mathcal{C}^n$.
   Then we can think of the boundary of $\mathcal{C}^n$ being tessellated by those facets
   $\{ \overline{F}^*_i \}$ of the $2^n$ copies of $\mathcal{C}^n_0$ which have not been glued.
   In fact, for each $1\leq i \leq n$, the facet $\mathbf{F}(i)$ of $\mathcal{C}^n$ is tessellated by
    the $\overline{F}^*_i$ in all copies of $\mathcal{C}^n_0$ in
     $\mathcal{C}^n_0\times G_i$, where $G_i$ is the subgroup of $(\Z_2)^n$
    generated by $\{ e_1,\cdots, \widehat{e}_i, \cdots, e_n \}$,
    and $\mathbf{F}(-i)$ of $\mathcal{C}^n$ is tessellated by
    the $\overline{F}^*_i$ in all copies of $\mathcal{C}^n_0$ in
     $\mathcal{C}^n_0\times (e_i+G_i)$ (see Figure~\ref{p:Partial-Glue} for $n=2$ case).\n

       \begin{figure}
          % Requires \usepackage{graphicx}
         \includegraphics[width=0.52\textwidth]{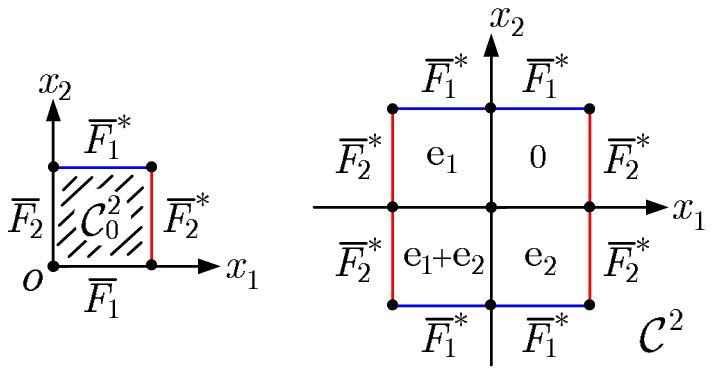}
          \caption{  }\label{p:Partial-Glue}
      \end{figure}

   To further obtain $M(\mathcal{C}^n_0, \lambda_A)$,
  we should glue any $\overline{F}^*_i \times \{ g\} \subset
  \mathbf{F}(i)$, $g\in G_i$
  to $ \overline{F}^*_i \times \{ g + \lambda_A(\overline{F}^*_i)\} \subset \mathbf{F}(-i)$
    by the map $(x_1,\cdots,x_n) \longrightarrow
      ((-1)^{\widetilde{A}^i_1}x_1, \cdots, (-1)^{\widetilde{A}^i_n}x_n )$

   \noindent   $ =
      ((-1)^{A^i_1}x_1, \cdots, (-1)^{A^i_{i-1}}x_{i-1}, -x_i,
     (-1)^{A^i_{i+1}}x_{i+1}, \cdots, (-1)^{A^i_n}x_n )$, $1\leq i \leq n$ (see
     the definition of $\lambda_A$).
   Observe that this exactly agrees with the structure map
    $\tau^A_i: \mathbf{F}(i) \rightarrow \mathbf{F}(-i)$ of $\mathcal{F}_A$
    (see~\eqref{Equ:tau_j-Cube}). So
      $M(\mathcal{C}^n_0, \lambda_A)$ and $Q^n_{\mathcal{F}_A}$ are
      the quotient space of $\mathcal{C}^n$ by the same
      quotient map, hence they are homeomorphic.
    And clearly the action of $H$ on
     $Q^n_{\mathcal{F}_A}$ can be identified with the natural $(\Z_2)^n$-action
     on $M(\mathcal{C}^n_0, \lambda_A)$ defined in~\eqref{Equ:Natural-Action}.
  \end{proof}
   \n

      When $A\in \mathfrak{B}(n)$, we can show that $\lambda_A$ is non-degenerate
      on $\mathcal{C}^n_0$, hence $M(\mathcal{C}^n_0,\lambda_A)$ is a small cover.
      Indeed,
      let $u_j$ be the vertex of $\mathcal{C}^n_0$ on
      the $x_j$-axis, $1\leq j \leq n$. For any subset
      $\{ j_1,\cdots, j_s \} \subset \{ 1,\cdots, n \}$,
      let $u_{j_1\cdots j_s}$ be the vertex of
      $\mathcal{C}^n_0$ whose projection to the $x_{j_i}$-axis is
      $u_{j_i}$ for each $1\leq i \leq s$. So the facets of $\mathcal{C}^n_0$
      which contain $u_{j_1\cdots j_s}$ are
       $$\{ \overline{F}^*_{j_1}, \cdots,
      \overline{F}^*_{j_s}, \overline{F}_{l_1} , \cdots, \overline{F}_{l_{n-s}}\}, $$
   where $\{ l_1,\cdots, l_{n-s}\}= \{ 1,\cdots, n \} \backslash \{ j_1,\cdots, j_s \}$.
     From the definition of $\lambda_A$,
     it is easy to see that the non-degeneracy of $\lambda_A$
       at a vertex $u_{j_1\cdots j_s}$
      corresponds exactly to the non-degeneracy of the
    matrix $\widetilde{A}^{j_1\cdots j_s}_{j_1\cdots j_s}$ (see~\eqref{Equ:Auxil-SubMatrix}).
      Since $A$ is a Bott matrix, Lemma~\ref{Lem:MasudaPanov} implies that
      any principal minor of $\widetilde{A}$ is $1$, so
      $\widetilde{A}^{j_1\cdots j_s}_{j_1\cdots j_s}$ is
      non-degenerate.
      Therefore when $A\in \mathfrak{B}(n)$,
       the natural $(\Z_2)^n$-action on  $M(\mathcal{C}^n_0,
      \lambda_A)$ is locally standard, so is the action of $H$ on
      $Q^n_{\mathcal{F}_A}$.
       \nn

     \begin{rem}
       In the section 4 of ~\cite{SuyMasDong10}, another form of
        locally standard $(\Z_2)^n$-action on a real Bott manifold is
       given, which is equivalent to the one constructed above.
        But we want to warn the reader that the $M(A)$ defined in this paper
       actually corresponds to
       the space $M(A+I_n)$ defined in~\cite{SuyMasDong10}.
      In addition, the formulae in the section 4 of~\cite{SuyMasDong10} are written
        for quasitoric manifolds, but the small cover case is
        parallel.  \\
      \end{rem}

 \section{Singularities in Glue-back Construction} \label{Section8}

    Suppose $A$ is an arbitrary $n\times n$ binary matrix with zero diagonal
    entries. By Lemma~\ref{Lem:Equiv-Equivalence}, we have an equivariant
     homeomorphism from the seal space $Q^n_{\mathcal{F}_A}$ of $\mathcal{F}_A$ to
     $M(\mathcal{C}^n_0, \lambda_A)$.
    So to understand the singularities that might occur in
    $Q^n_{\mathcal{F}_A}$, it amounts to understand the
    singularities in the glue-back construction.
     Notice that when $\lambda_A$ is not non-degenerate at all
      vertices of $\mathcal{C}^n_0$, the
       natural $(\Z_2)^n$-action on $M(\mathcal{C}^n_0, \lambda_A)$ may not be
        locally standard at some places in $M(\mathcal{C}^n_0,\lambda_A)$
    and, $M(\mathcal{C}^n_0,\lambda_A)$ may not even be a manifold.
    Let us see such an example first.\nn

  \begin{exam} \label{Exam:Strang-Strong-2}
    For the matrix $A$ defined in Example~\ref{Exam:Strang-Strong},
    the function $\lambda_A$ on the facets of $\mathcal{C}^3_0$ is:
     $$\lambda_A (\overline{F}^*_1) = e_1+ e_3,\ \lambda_A (\overline{F}^*_2) = e_1+
      e_2, \ \ \lambda_A (\overline{F}^*_3) = e_2+ e_3. $$
    So the $\lambda_A$ is degenerate at the vertex
   $u_{123}$ of $\mathcal{C}^3_0$ (see Figure~\ref{p:Strang-Strong}).
    And the natural $(\Z_2)^3$-action on
   $M(\mathcal{C}^3_0, \lambda_A)$ is not locally
   standard in a neighborhood of $u_{123}$.
   In fact,
   the neighborhood of $u_{123}$ in $M(\mathcal{C}^3_0, \lambda_A)$
   is homeomorphic to a cone of $\R P^2$. This is because for any section
   $\bigtriangledown$ of the cube
    near $u_{123}$, $\lambda_A$ induces a characteristic function $\lambda_A^{\bigtriangledown}$
    on the three edges of $\bigtriangledown$ (see the right picture of Figure~\ref{p:Strang-Strong}).
    Obviously, $M(\bigtriangledown, \lambda^{\bigtriangledown}_A) \cong \R P^2$.
  So $M(\mathcal{C}^3_0, \lambda_A)$
    is not a manifold. This example tells us that the seal space of
   a (non-trivial) strong regular facets-pairing structure on a cube might
   not be a manifold.

    \begin{figure}
          % Requires \usepackage{graphicx}
         \includegraphics[width=0.56\textwidth]{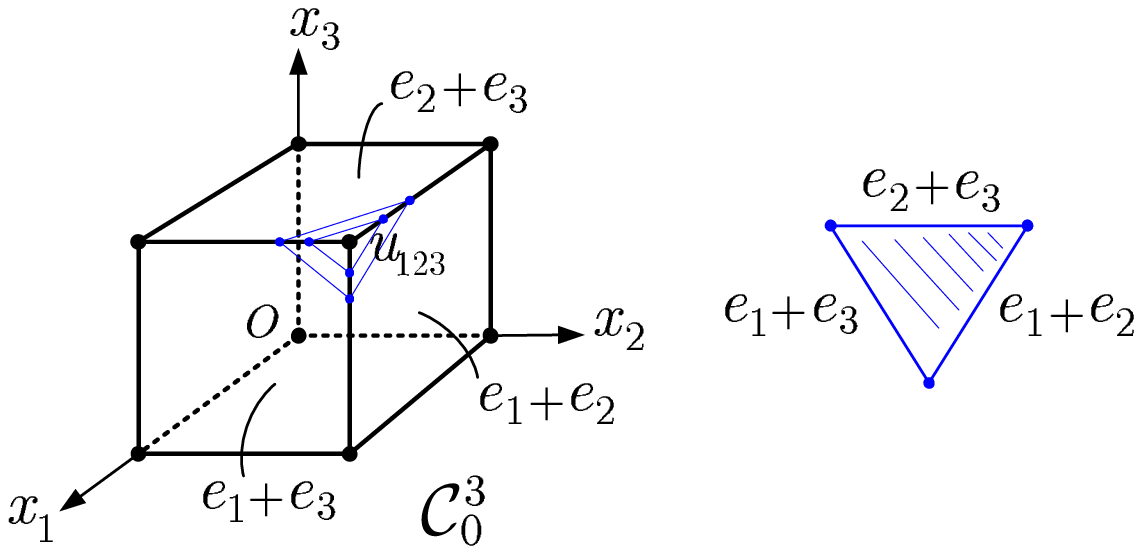}
          \caption{  }\label{p:Strang-Strong}
      \end{figure}
   \end{exam}

    \n

    The cause of the singularity in the above example can be
     formulated into a general condition on a generalized characteristic
     function $\widetilde{\lambda}$ on a simple polytope $P^n$
     which would make $M(P^n, \widetilde{\lambda})$ have some singular point.\nn

    \begin{lem} \label{Lem:General-Char-Func}
         Suppose $P^n$ is an $n$-dimensional simple polytope and
  $\widetilde{\lambda}$ is a generalized characteristic function
  on $P^n$.
     If there exists a vertex $v_0$ of $P^n$
       and a set of facets $F_{i_1},\cdots, F_{i_r}$ ($3\leq r \leq n$)
        meeting $v_0$
       such that: $\lambda(F_{i_1}),  \cdots, \lambda(F_{i_{r-1}}) \in (\Z_2)^n$
       are linearly independent over $\Z_2$ and
        $\lambda(F_{i_r}) = \lambda(F_{i_1}) + \cdots + \lambda(F_{i_{r-1}})$,
      then the space $M(P^n, \widetilde{\lambda})$ defined by~\eqref{Equ:Glue-back}
      is not a manifold.
    \end{lem}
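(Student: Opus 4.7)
The plan is to reduce the question to a local cone model near $v_0$, exhibit an $\mathbb{R}P^{r-1}$ naturally sitting inside the link of a point $p \in M(P^n, \widetilde{\lambda})$ above $v_0$, and use it to obstruct local Euclidean structure at $p$. Since $P^n$ is simple, a small closed neighborhood $U$ of $v_0$ is homeomorphic to $\mathbb{R}^n_{\geq 0}$, with $v_0$ at the origin and the $n$ facets of $P^n$ meeting $v_0$ corresponding to the coordinate hyperplanes. Let $G_{v_0} \leq (\mathbb{Z}_2)^n$ be the subgroup generated by $\{\widetilde{\lambda}(F_j) : v_0 \in F_j\}$. Reading off~\eqref{Equ:Glue-back}, the preimage of $U$ in $M(P^n, \widetilde{\lambda})$ decomposes as $[(\mathbb{Z}_2)^n : G_{v_0}]$ disjoint neighborhoods $N_p := U \times G_{v_0}/\!\sim$, each containing exactly one point $p$ above $v_0$. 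Identifying $U$ with the cone on $L := \{x \in U : \sum x_i = 1\} \cong \Delta^{n-1}$, the piece $N_p$ is canonically a cone with apex $p$ over the link $L_p := L \times G_{v_0}/\!\sim\,=\,M(L, \widetilde{\lambda}|_L)$ (glue-back with ambient group $G_{v_0}$). If $M$ were a manifold at $p$, then $N_p$ would be an open $n$-ball and $L_p$ would be homeomorphic to $S^{n-1}$; the claim therefore reduces to proving $L_p \not\cong S^{n-1}$.

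Next I would identify an essential $\mathbb{R}P^{r-1}$ inside $L_p$. After relabeling, assume the problem facets are $F_1, \ldots, F_r$ with $\widetilde{\lambda}(F_r) = \widetilde{\lambda}(F_1) + \cdots + \widetilde{\lambda}(F_{r-1})$ and $\widetilde{\lambda}(F_1), \ldots, \widetilde{\lambda}(F_{r-1})$ linearly independent, spanning a rank-$(r-1)$ subgroup $V \leq G_{v_0}$. Let $e := L \cap \bigcap_{j > r} F_j \subset L$, an $(r-1)$-simplex whose facets $e \cap F_k$ ($k \leq r$) carry the characteristic values $\widetilde{\lambda}(F_1), \ldots, \widetilde{\lambda}(F_r)$. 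These $r$ vectors sum to zero in $V$ but any $r-1$ of them are linearly independent, so this characteristic function is nondegenerate at every vertex of $e$, and the standard small-cover description of real projective space identifies $M(e, \widetilde{\lambda}|_e; V) \cong \mathbb{R}P^{r-1}$. The inclusion $e \hookrightarrow L$ is face-compatible with the characteristic functions --- one checks pointwise that $G_{e, f_e(x)} \subseteq G_{L, f_L(x)}$, using the given linear relation crucially --- and so induces a natural continuous map $\iota : \mathbb{R}P^{r-1} \to L_p$.

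Finally, I must use $\iota$ to rule out $L_p \cong S^{n-1}$, and this is the main obstacle. Since $r \geq 3$ we have $\pi_1(\mathbb{R}P^{r-1}) = \mathbb{Z}_2$, and since $n \geq r \geq 3$ we have $\pi_1(S^{n-1}) = 0$; so it suffices to show that $\iota_*$ is injective on $\pi_1$. In the generic situation where $\widetilde{\lambda}(F_{r+1}), \ldots, \widetilde{\lambda}(F_n)$ span a complement to $V$ in $G_{v_0}$, the transverse product structure of $L$ near $e$ produces an equivariant retraction $L_p \to \iota(\mathbb{R}P^{r-1})$ by collapsing the $n-r$ transverse directions, which gives the required injectivity immediately. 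When the remaining characteristic vectors carry further dependences, I would either reduce to a smaller $r' < r$ exhibiting the same kind of relation (and iterate), or run a direct $\mathbb{Z}_2$-cohomology computation using the natural $G_{v_0}$-action on $L_p$. Once injectivity of $\iota_*$ is established, $\pi_1(L_p) \neq 0$ contradicts $L_p \cong S^{n-1}$, and therefore $M(P^n, \widetilde{\lambda})$ is not a manifold at $p$.
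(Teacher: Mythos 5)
Your construction of the model $\R P^{r-1}$ from the $r$ vectors (any $r-1$ of which are independent, summing to zero) is correct, but the proof as a whole has a genuine gap, and it stems from your choice of basepoint. By working at the point $p$ over the vertex $v_0$, you are forced to analyze the link $L_p=M(\Delta^{n-1},\widetilde{\lambda}|_L)$, which involves \emph{all} $n$ characteristic vectors at $v_0$, and you then need $\iota_*:\pi_1(\R P^{r-1})\to\pi_1(L_p)$ to be injective. You only establish this in the ``generic'' case where $\widetilde{\lambda}(F_{r+1}),\cdots,\widetilde{\lambda}(F_n)$ span a complement of $V$, and for the remaining cases you offer two unexecuted strategies; as written this is not a proof. (The proposed retraction ``collapsing the transverse directions'' is itself only sketched, and it is not clear it exists when the remaining vectors have further dependences.) A secondary, repairable issue: ``$M$ a manifold at $p$ $\Rightarrow$ $L_p\cong S^{n-1}$'' is not justified --- being a manifold at $p$ means \emph{some} neighborhood is a ball, not your particular cone neighborhood, and in general a cone can be a manifold at its apex without the link being a sphere (double suspension phenomena). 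The correct invariants to use are the local homology $H_*(N_p,N_p\setminus\{p\})\cong \widetilde{H}_{*-1}(L_p)$ or the local fundamental group.

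Both difficulties disappear if you move the basepoint, which is what the paper does: take $q$ in the relative interior of the codimension-$r$ face $f=F_{i_1}\cap\cdots\cap F_{i_r}$ (nonempty since $P^n$ is simple and the facets meet at $v_0$). Then the only facets containing $f(q)$ are $F_{i_1},\cdots,F_{i_r}$, so the local group in \eqref{Equ:Glue-back} is exactly $V\cong(\Z_2)^{r-1}$, and a neighborhood of $q$ in $M(P^n,\widetilde{\lambda})$ is $(-\varepsilon,\varepsilon)^{n-r}\times\mathrm{Cone}(\R P^{r-1})$ --- your $\R P^{r-1}$ \emph{is} the link, with no extraneous facets to control. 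Since $r\geq 3$, the local homology at the cone point is $\widetilde{H}_{*-1}(\R P^{r-1})$, which is not that of a point in $\R^r$, so $M(P^n,\widetilde{\lambda})$ fails to be a manifold at $q$. I recommend restructuring your argument around this interior point rather than the vertex.
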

  \begin{proof}
     The type of singularity in $M(P^n, \widetilde{\lambda})$
      is similar to that in the Example~\ref{Exam:Strang-Strong-2}.
      Indeed, for any point $q$ in the relative interior of
      the face $F_{j_1} \cap \cdots \cap F_{j_r}$,
        an open neighborhood of $q$ in $M(P^n, \widetilde{\lambda})$
        is homeomorphic to $(-\varepsilon, \varepsilon)^{n-r} \times \mathrm{Cone}(\R P^{r-1})$.
          Since $r\geq 3$, the cone
         on $\R P^{r-1}$ is not homeomorphic to a ball. So the space
        $M(P^n, \widetilde{\lambda})$ is not a manifold at $q$.
     \end{proof}
     \n

    An equivalent way to state Lemma~\ref{Lem:General-Char-Func} is the
   following.\nn

  \begin{lem} \label{Lem:General-Char-Func-2}
     If $M(P^n, \widetilde{\lambda})$ is a  manifold,
     it is necessary that: at any vertex $v = F_1 \cap \cdots \cap F_n$ of $P^n$,
    if
     $\widetilde{\lambda}(F_{i_1}),\cdots, \widetilde{\lambda}(F_{i_s})$
     are maximally linearly independent among $\widetilde{\lambda}(F_1),\cdots,
     \widetilde{\lambda}(F_n)$, then each
     $\widetilde{\lambda}(F_i)$ ($1\leq i \leq n$) must
     coincide with one of the $\widetilde{\lambda}(F_{i_1}),\cdots, \widetilde{\lambda}(F_{i_s})$.
 \end{lem}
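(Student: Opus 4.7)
The plan is to derive Lemma~\ref{Lem:General-Char-Func-2} from Lemma~\ref{Lem:General-Char-Func} by contraposition. I would begin by assuming that $M(P^n,\widetilde{\lambda})$ is a manifold and yet, at some vertex $v = F_1 \cap \cdots \cap F_n$ of $P^n$, there is a maximally linearly independent subset $\widetilde{\lambda}(F_{i_1}),\dots,\widetilde{\lambda}(F_{i_s})$ of $\{\widetilde{\lambda}(F_1),\dots,\widetilde{\lambda}(F_n)\}$ together with an index $j$ such that $\widetilde{\lambda}(F_j)$ coincides with none of $\widetilde{\lambda}(F_{i_1}),\dots,\widetilde{\lambda}(F_{i_s})$. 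The aim is to show this configuration forces the hypothesis of Lemma~\ref{Lem:General-Char-Func}, yielding the required contradiction.

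First I would note that each $\widetilde{\lambda}(F_{i_k})$ is nonzero (as a member of a linearly independent set), so the non-coincidence assumption implies in particular $F_j \notin \{F_{i_1},\dots,F_{i_s}\}$; hence $F_j$ is an extra facet at $v$ and therefore $s \le n-1$. By maximality of the independent set, $\widetilde{\lambda}(F_j)$ lies in its span, yielding a unique expansion
\[
\widetilde{\lambda}(F_j) \;=\; \sum_{k\in S}\widetilde{\lambda}(F_{i_k})
\]
for some $S\subseteq\{1,\dots,s\}$. The hypothesis that $\widetilde{\lambda}(F_j)$ differs from each $\widetilde{\lambda}(F_{i_k})$ rules out $|S|=1$, so either $|S|\ge 2$ or $|S|=0$.

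The principal case is $|S|\ge 2$. Here I would take the $r := |S|+1$ facets $\{F_{i_k} : k\in S\}\cup\{F_j\}$ meeting at $v$: the first $|S|$ of them have linearly independent $\widetilde{\lambda}$-values and $\widetilde{\lambda}(F_j)$ equals their sum. Since $3 \le r = |S|+1 \le s+1 \le n$, the hypothesis of Lemma~\ref{Lem:General-Char-Func} is met, and it follows that $M(P^n,\widetilde{\lambda})$ is not a manifold, contradicting our standing assumption. The residual case $|S|=0$ (i.e.\ $\widetilde{\lambda}(F_j)=0$) lies just outside the range $r\ge 3$ of Lemma~\ref{Lem:General-Char-Func}, and I would dispatch it directly from the glue-back formula~\eqref{Equ:Glue-back}: at an interior point $q$ of $F_j$ the subgroup $G_{f(q)} = \langle \widetilde{\lambda}(F_j)\rangle$ is trivial, so no identification occurs across $F_j$, and every preimage of $[(q,g)]$ in $M(P^n,\widetilde{\lambda})$ has only a half-ball neighborhood, forcing boundary points and again contradicting the manifold hypothesis. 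The only bookkeeping point that requires care is the upper bound $r \le n$ in Lemma~\ref{Lem:General-Char-Func}; this is guaranteed by the initial observation $s \le n-1$, so no serious obstacle is expected.
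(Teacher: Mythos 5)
Your proof is correct and follows essentially the route the paper intends: the paper presents this lemma as simply ``an equivalent way to state'' Lemma~\ref{Lem:General-Char-Func}, i.e.\ as its contrapositive, which is exactly your argument. You are in fact slightly more careful than the paper, since you isolate and separately dispatch the edge case $\widetilde{\lambda}(F_j)=0$ (where $r=1$ falls outside the range $3\leq r\leq n$ of Lemma~\ref{Lem:General-Char-Func}) via a direct boundary-point argument from the glue-back construction.
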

   \n

  From the above lemma, we can easily derive the following.\nn

  \begin{cor} \label{Cor:Mfd-General-Char-Func}
  Suppose $P^n$ is an $n$-dimensional simple polytope and
  $\widetilde{\lambda}$ is a generalized characteristic function
  on $P^n$ so that $M(P^n, \widetilde{\lambda})$ is a
  manifold. Then at any vertex $v=F_1\cap \cdots \cap F_n$ of $P^n$, if
  $\widetilde{\lambda}(F_1), \cdots, \widetilde{\lambda}(F_n) \in (\Z_2)^n$ are pairwise
  distinct, then $\widetilde{\lambda}$ must be non-degenerate at $v$.
   \end{cor}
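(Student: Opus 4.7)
The plan is to derive this corollary as a direct consequence of Lemma~\ref{Lem:General-Char-Func-2}, essentially through a pigeonhole argument. The hypothesis that $M(P^n,\widetilde{\lambda})$ is a manifold activates the necessary condition in that lemma at every vertex, and the extra assumption that the values $\widetilde{\lambda}(F_1),\ldots,\widetilde{\lambda}(F_n)$ are pairwise distinct is exactly what we need to rule out the degenerate possibility.

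Concretely, I would fix the vertex $v = F_1 \cap \cdots \cap F_n$ and choose a maximal linearly independent subcollection $\widetilde{\lambda}(F_{i_1}),\ldots,\widetilde{\lambda}(F_{i_s})$ of $\widetilde{\lambda}(F_1),\ldots,\widetilde{\lambda}(F_n)$, so that $s$ is the rank of this set of vectors over $\Z_2$. By Lemma~\ref{Lem:General-Char-Func-2}, since $M(P^n,\widetilde{\lambda})$ is a manifold, each $\widetilde{\lambda}(F_i)$ must coincide with some $\widetilde{\lambda}(F_{i_j})$. This defines a map from $\{F_1,\ldots,F_n\}$ into the subset $\{F_{i_1},\ldots,F_{i_s}\}$ via $F_i \mapsto F_{i_j}$ whenever $\widetilde{\lambda}(F_i) = \widetilde{\lambda}(F_{i_j})$.

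The key observation is that the hypothesis of pairwise distinct values forces this map to be injective: if $F_i \neq F_{i'}$ both mapped to the same $F_{i_j}$, then $\widetilde{\lambda}(F_i) = \widetilde{\lambda}(F_{i'}) = \widetilde{\lambda}(F_{i_j})$, contradicting the distinctness assumption. An injection from an $n$-element set into an $s$-element set requires $s \geq n$; combined with the obvious $s \leq n$, we get $s = n$, so $\widetilde{\lambda}(F_1),\ldots,\widetilde{\lambda}(F_n)$ are all linearly independent over $\Z_2$. This is exactly the non-degeneracy of $\widetilde{\lambda}$ at $v$.

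There is no real obstacle here, since Lemma~\ref{Lem:General-Char-Func-2} has already done the geometric work of extracting the necessary condition from the manifold hypothesis; the corollary is essentially a bookkeeping/pigeonhole consequence. The only care required is to state clearly what ``maximally linearly independent'' means and to invoke distinctness at the correct step. The proof should therefore be a short paragraph, and I would not need any additional machinery beyond the preceding lemma.
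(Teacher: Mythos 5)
Your proof is correct and follows exactly the route the paper intends: the paper states this corollary with no written proof beyond ``From the above lemma, we can easily derive the following,'' and your pigeonhole argument (distinctness forces every $\widetilde{\lambda}(F_i)$ to lie in, rather than merely coincide with a member of, the maximal independent subcollection, so $s=n$) is precisely that easy derivation from Lemma~\ref{Lem:General-Char-Func-2}.
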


   \n

          However,
      it is possible that a
      generalized characteristic function $\widetilde{\lambda}$
     on $P^n$, even not non-degenerate at some vertices,
      can still make $M(P^n, \widetilde{\lambda})$ a closed manifold.
      Let us see such an example below.
      \nn

  \begin{exam} \label{Exam:PR2}
   For the binary matrix $A = \begin{pmatrix}
       0  & 1   \\
        1 &  0  \\
   \end{pmatrix}$,  the generalized
   characteristic function $\lambda_A$ on $\mathcal{C}^2_0$ defined by ~\eqref{Equ:Char-Function-1}
    and~\eqref{Equ:Char-Function-2} is:
    \[ \lambda_A(\overline{F}_1) = e_1, \ \lambda_A(\overline{F}_2) = e_2 ; \
      \lambda_A(\overline{F}^*_1) = \lambda_A (\overline{F}^*_2) = e_1 + e_2.
    \]
     So $\lambda_A$ is not non-degenerate at the vertex
     $u_{12} = \overline{F}^*_1 \cap \overline{F}^*_2$. But it is easy to check that
    $M(\mathcal{C}^2_0, \lambda_A)$ is homeomorphic to $\R P^2$ and
     the natural $(\Z_2)^2$-action on $M(\mathcal{C}^2_0, \lambda_A)$
     defined by~\eqref{Equ:Natural-Action} is locally standard.
   Indeed, by Lemma~\ref{Lem:Equiv-Equivalence},  $M(\mathcal{C}^2_0, \lambda_A)$ is homeomorphic to
   the seal space of the facets-pairing structure $\mathcal{F}_A$ on $\mathcal{C}^2$
   (see the picture in Figure~\ref{p:RP2}). Obviously, the seal space of $\mathcal{F}_A$ is $\R P^2$.
     \n

   Another way to understand this example is:
    since $\lambda_A(\overline{F}^*_1) = \lambda_A (\overline{F}^*_2)$,
     we let the edge $\overline{F}^*_1$ merge with
      $\overline{F}^*_2$ to form a long edge. And then we get a non-degenerated
    characteristic function $\lambda^{red}_A$ on a $2$-simple $\Delta^2$ (see the right picture
    in Figure~\ref{p:RP2}). The corresponding small
    cover $M(\Delta^2, \lambda^{red}_A)$ is homeomorphic to $\R P^2$.
    Moreover, we have an equivariant homeomorphism from $M(\mathcal{C}^2_0, \lambda_A)$ to
    $M(\Delta^2,\lambda^{red}_A)$
    which is induced by the merging of $\overline{F}^*_1$
    with $\overline{F}^*_2$ on $\mathcal{C}^2_0$.
  \begin{figure}
          % Requires \usepackage{graphicx}
         \includegraphics[width=0.75\textwidth]{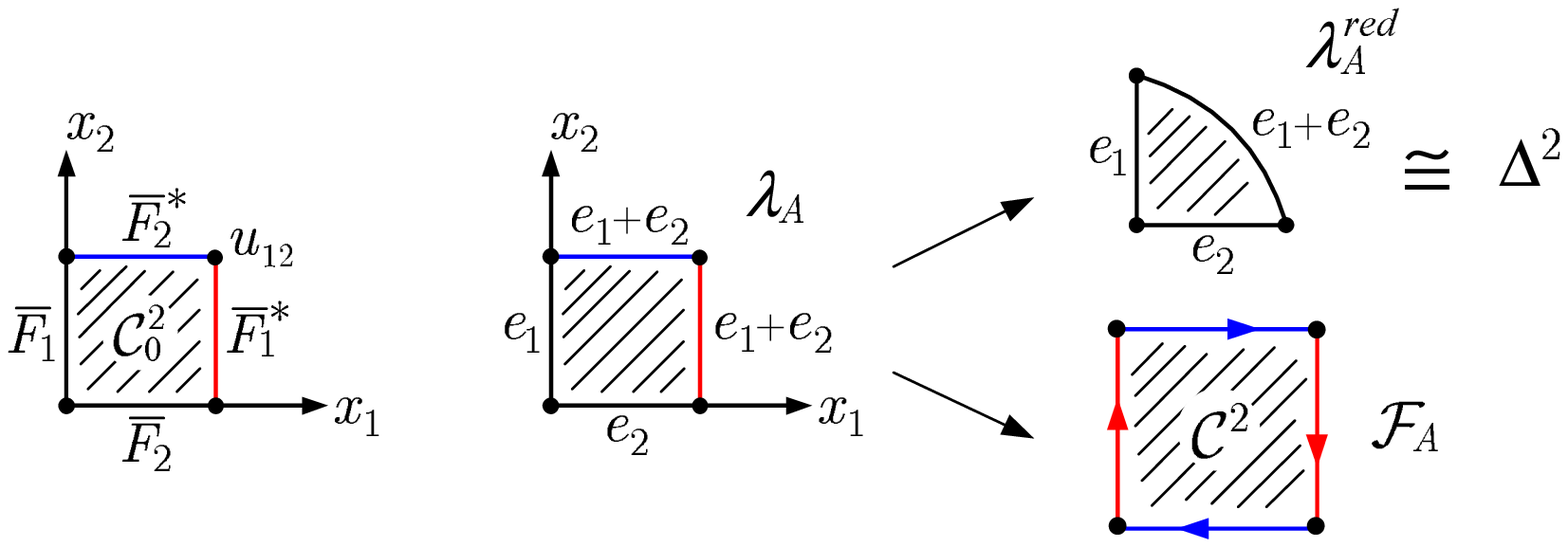}
          \caption{  }\label{p:RP2}
      \end{figure}

   \end{exam}
     \n

   The idea of merging two neighboring edges into one edge in Example~\ref{Exam:PR2} can
   be generalized to the following setting.\nn

  \begin{defi}[Smoothing a nice manifold with corners along codimension-two faces]
  \label{Defi:Smoothing}
    Suppose $W^n$ is a nice manifold with corners and $\mathbf{f}=\{ f_1,\cdots, f_k \}$ is a
    set of codimension-two faces of
     $W^n$. When we say \textit{smoothing $W^n$ along $\mathbf{f}$}, we mean that
    we forget $f_1,\cdots, f_k$ as well as all their faces
    from the manifold with corners structure of $W^n$. The stratified space we get
   is denoted by $W^n[\mathbf{f}]$. In other words, we think of
   $f_1,\cdots, f_k$ as well as all their faces as empty
   faces in $W^n[\mathbf{f}]$.\n

      Geometrically, we can think of the smoothing of $W^n$ along
      $\mathbf{f}=\{ f_1,\cdots, f_k \}$
       as a local deformation
      of $W^n$ around $f_1,\cdots, f_k$ to make $W^n$ ``smooth'' at those
      places, then removing $f_1,\cdots, f_k$ as well as all their faces from
      the stratification of $\partial W^n$. This process is similar
      to the \textit{straightening of angles} introduced
      in the first chapter of~\cite{Conner79}.\nn

    In Figure~\ref{p:Smoothing}, we can see the local picture of smoothing
    an $n$-dimensional nice manifold with corners ($n=3$ and $4$) along a codimension-two face.
    \end{defi}

    \begin{rem}
     Generally speaking, $W^n[\mathbf{f}]$ may not be a nice manifold
    with corners any more, though $W^n$ is.
    \end{rem}

   Suppose $f_i = F^{(1)}_{i} \cap F^{(2)}_{i}$ where
   $ F^{(1)}_{i}, F^{(2)}_{i}$ are facets of $W^n$. Then
   $F^{(1)}_{i}$ and $F^{(2)}_{i}$ will merge into a big facet or
  part of a big facet in $W^n[\mathbf{f}]$. More generally, two facets
   $F, F'$ of $W^n$ will become part of a big facet in $W^n[\mathbf{f}]$
  if and only if there exists a sequence $F=F_1, F_2,\cdots, F_r= F'$
   so that for each $1\leq j \leq r-1$,
     $F_j \cap F_{j+1} \in \mathbf{f}$.
   Let $\mathcal{S}_F(W^n)$ and $\mathcal{S}_F(W^n[\mathbf{f}])$ denote the set of facets of
  $W^n$ and $W^n[\mathbf{f}]$ respectively, then we have a natural map
    $$\psi_{[\mathbf{f}]}: \ \mathcal{S}_F(W^n)\ \longrightarrow \ \mathcal{S}_F(W^n[\mathbf{f}])
      $$
   where $\psi_{[\mathbf{f}]}$ sends any
   facet $F$ of $W^n$ to the facet of $W^n[\mathbf{f}]$ which contains
   $F$ as a set. Obviously, $\psi_{[\mathbf{f}]}$ is surjective.\nn

   \begin{figure}
          % Requires \usepackage{graphicx}
         \includegraphics[width=0.9\textwidth]{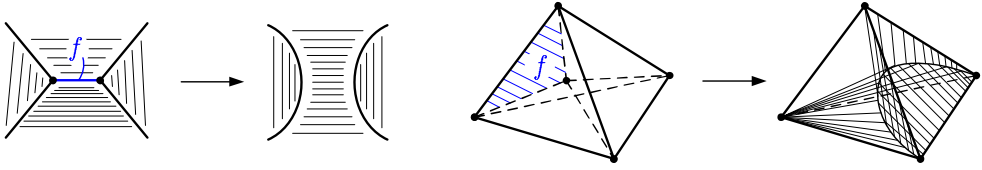}
          \caption{ Smoothing an $n$-dimensional nice manifold with corners
        along a codimension-two face $f$ ($n=3$ and $4$) }\label{p:Smoothing}
      \end{figure}

  Suppose $\mu$ is a $(\Z_2)^m$-valued function on the set of facets of $W^n$
  which satisfies:
  \[ \text{$\mu(F) = \mu(F')$
    whenever $\psi_{[\mathbf{f}]}(F)=\psi_{[\mathbf{f}]}(F')$ for any facets $F,F'$ of
   $W^n$,}
   \]
  we say that $\mu$ is \textit{compatible with} $\psi_{[\mathbf{f}]}$. In this case,
   $\mu$ induces a $(\Z_2)^m$-valued function $\mu[\mathbf{f}]$
   on the set of facets of $W^n[\mathbf{f}]$ by:
    \begin{equation} \label{Equ:Induced-Function}
      \mu[\mathbf{f}] (\psi_{[\mathbf{f}]}(F))
   := \mu(F)\ \ \text{for any facet $F$ of $W^n$}.
   \end{equation}
 We call $\mu[\mathbf{f}]$ the \textit{induced function from} $\mu$ with respect to
  the smoothing. If we assume
   $W^n[\mathbf{f}]$ is still a nice manifold with corners, then
  the glue-back construction $M(W^n[\mathbf{f}], \mu[\mathbf{f}])$ can be defined.
  It is easy to see that the natural $(\Z_2)^m$-action on $M(W^n, \mu)$ and
  $M(W^n[\mathbf{f}], \mu[\mathbf{f}])$ can be identified through the smoothing of $W^n$.
  So we have the following.
 \nn

   \begin{lem} \label{Lem:Induced-Char-Func}
  Suppose $\mu$ is a $(\Z_2)^m$-valued function on the set of facets of $W^n$
   which is compatible with $\psi_{[\mathbf{f}]}$. If
   $W^n[\mathbf{f}]$ is still a nice manifold with corners, then
  there is an equivariant homeomorphism from $M(W^n, \mu)$
   to $M(W^n[\mathbf{f}], \mu[\mathbf{f}])$.
  \end{lem}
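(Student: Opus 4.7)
The plan is to construct the homeomorphism via the identity map on the underlying point set. Both $W^{n}$ and $W^{n}[\mathbf{f}]$ share the same underlying topological space; smoothing only changes which strata are declared to be faces. So the identity map $\mathrm{id}: W^{n}\times (\Z_2)^m \to W^{n}[\mathbf{f}]\times (\Z_2)^m$ is a homeomorphism, and it suffices to check that it descends to a bijection between the two quotients
\[
 M(W^{n},\mu) = W^{n}\times (\Z_2)^m / {\sim}, \qquad
 M(W^{n}[\mathbf{f}], \mu[\mathbf{f}]) = W^{n}[\mathbf{f}]\times (\Z_2)^m / {\sim'}.
\]
Equivariance with respect to $(\Z_2)^m$ will then be automatic from~\eqref{Equ:Natural-Action}.

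Write $\Xi(p)$ for the set of facets of $W^{n}$ containing a point $p$, and $\Xi'(p)$ for the set of facets of $W^{n}[\mathbf{f}]$ containing $p$. For each $p \in W^{n}$, let $G_{f(p)} \le (\Z_2)^m$ and $G'_{\widetilde{f}(p)}\le (\Z_2)^m$ be the isotropy subgroups used in the two glue-back constructions, i.e.\ $G_{f(p)} = \langle \mu(F) : F \in \Xi(p)\rangle$ and $G'_{\widetilde{f}(p)} = \langle \mu[\mathbf{f}](F') : F'\in \Xi'(p)\rangle$. Since the equivalence relation on each side is $(p,g)\sim (p',g')$ iff $p=p'$ and $g-g'$ lies in the corresponding isotropy subgroup, the whole lemma reduces to the single claim
\[
  G_{f(p)} = G'_{\widetilde{f}(p)} \qquad \text{for every } p \in W^{n}.
\]

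The first main step is the observation that the restriction $\psi_{[\mathbf{f}]}: \Xi(p) \to \Xi'(p)$ is surjective: any facet $F'$ of $W^{n}[\mathbf{f}]$ containing $p$ is the union of certain facets of $W^{n}$ which got glued along codimension-two faces in $\mathbf{f}$, and $p$ must belong to at least one of them. The second main step is that by the definition~\eqref{Equ:Induced-Function} of $\mu[\mathbf{f}]$, one has $\mu[\mathbf{f}](\psi_{[\mathbf{f}]}(F)) = \mu(F)$ for every $F\in \Xi(p)$. Combining the two, every generator of $G'_{\widetilde{f}(p)}$ comes from some generator of $G_{f(p)}$ (giving $G'_{\widetilde{f}(p)}\subseteq G_{f(p)}$), and conversely every generator $\mu(F)$ of $G_{f(p)}$ equals $\mu[\mathbf{f}](\psi_{[\mathbf{f}]}(F))$, which is a generator of $G'_{\widetilde{f}(p)}$ (giving $G_{f(p)}\subseteq G'_{\widetilde{f}(p)}$). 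This proves equality.

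I expect the only delicate point to be bookkeeping when $p$ lies in the relative interior of some $f_i \in \mathbf{f}$ (or of a face of $f_i$), because then $f(p)$ simply disappears under smoothing and $\widetilde{f}(p)$ becomes a strictly larger stratum of $W^{n}[\mathbf{f}]$. One has to verify that the compatibility condition $\mu(F)=\mu(F')$ whenever $\psi_{[\mathbf{f}]}(F)=\psi_{[\mathbf{f}]}(F')$ precisely compensates for this change, so that the two subgroups still coincide. Once $G_{f(p)}=G'_{\widetilde{f}(p)}$ is established for all $p$, the equivalence relations $\sim$ and $\sim'$ on $W^{n}\times(\Z_2)^m$ agree pointwise, and the identity map descends to the desired equivariant homeomorphism $M(W^{n},\mu) \cong M(W^{n}[\mathbf{f}], \mu[\mathbf{f}])$.
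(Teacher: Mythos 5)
Your proof is correct and follows exactly the route the paper intends: the paper itself offers no written argument beyond asserting that the two constructions "can be identified through the smoothing," and your reduction to the pointwise equality $G_{f(p)} = G'_{\widetilde{f}(p)}$ via the surjectivity of $\psi_{[\mathbf{f}]}$ on $\Xi(p)$ and the defining relation $\mu[\mathbf{f}]\circ\psi_{[\mathbf{f}]}=\mu$ is precisely the verification being left implicit. The "delicate point" you flag at the end is in fact already covered by your general argument, since compatibility is only needed to make $\mu[\mathbf{f}]$ well defined.
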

  \n

    Next, let us discuss a special kind of smoothings of an $n$-dimensional cube. Suppose
    $\{ I_1,\cdots, I_m \}$ is a \textit{partition} of the set $[n] := \{ 1,\cdots, n \}$, i.e.
    $I_1,\cdots, I_m$ are pairwise disjoint nonempty subsets of $[n]$ with
     $I_1 \cup \cdots \cup I_m = [n]$. Let $\mathbf{f}_{I_1\cdots I_m}$ be
    a set of codimension-two faces of $\mathcal{C}^n_0$ defined by:
     \begin{equation} \label{Equ:f-I}
        \mathbf{f}_{I_1\cdots I_m} :=
        \{ \overline{F}^*_l \cap \overline{F}^*_{l'}  \, ; \,
        l \ \text{and}\ l'\ \text{belong to the same}\ I_j \ \text{for some}
       \ 1\leq j \leq m  \}
     \end{equation}
    Notice that if $I_i$ has only one element, it
   has no contribution to $\mathbf{f}_{I_1\cdots I_m}$.
    Let $\mathcal{C}^n_{I_1\cdots I_m} := \mathcal{C}^n_0[\mathbf{f}_{I_1\cdots I_m}]$ be
    the smoothing of $\mathcal{C}^n_0$ along $\mathbf{f}_{I_1\cdots I_m}$.
    So we have a map
     $$\psi_{[\mathbf{f}_{I_1\cdots I_m}]} : \mathcal{S}_F(\mathcal{C}^n_0)
      \rightarrow \mathcal{S}_F(\mathcal{C}^n_{I_1\cdots I_m}).$$

   It is easy to see that for any $1\leq j \leq n$,
    $\overline{F}_j$ does not merge
    with any other facets in $\mathcal{C}^n_0$, while
    all the facets in $\{ \overline{F}^*_l \, ; \, l \in I_i \}$
    will merge into a big facet in $\mathcal{C}^n_{I_1\cdots I_m}$.
    We denote all the facets of $\mathcal{C}^n_{I_1\cdots I_m}$ by
     $\{ \widetilde{F}_1,\cdots, \widetilde{F}_n, \widetilde{F}^*_{I_1},
     \cdots, \widetilde{F}^*_{I_m} \}$
     where:\n

\begin{itemize}
  \item $\widetilde{F}_j = \psi_{[\mathbf{f}_{I_1\cdots I_m}]}(\overline{F}_j)$,
    $1\leq j \leq n$.
   \n

  \item $\widetilde{F}^*_{I_i} = \psi_{[\mathbf{f}_{I_1\cdots I_m}]} (\overline{F}^*_l)$ for any
     $l \in I_i$, $1\leq i \leq m$. In other words,
     $\widetilde{F}^*_{I_i}$  is the merging
     of all the facets $\{ \overline{F}^*_l \, ; \, l \in I_i \}$.
\end{itemize}

   It is easy to see that any face of
   $\mathcal{C}^n_{I_1\cdots I_m}$ is homeomorphic to a
   ball. So  $\mathcal{C}^n_{I_1\cdots I_m}$ is a nice manifold with corners
   with all faces contractible. \nn

     \begin{exam}
         In Figure~\ref{p:Smoothing-Faces}, we have two
         smoothings of $\mathcal{C}^3_0$.
          By our notation,
         the upper one is $\mathcal{C}^3_{\{ 1 \}\{2,3 \}} \cong \Delta^1 \times
         \Delta^2$, and the lower one is $\mathcal{C}^3_{\{1, 2,3 \}} \cong
         \Delta^3$ where $\Delta^i$ denotes the standard
         $i$-dimensional simplex in $\R^{i}$.

          \begin{figure}
          % Requires \usepackage{graphicx}
         \includegraphics[width=0.67\textwidth]{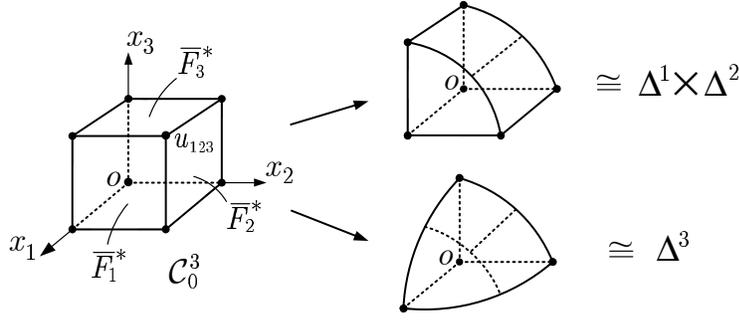}
          \caption{Two different smoothings of a cube}\label{p:Smoothing-Faces}
      \end{figure}
      \end{exam}
        \n

    \begin{thm} \label{thm:Smoothing-Cube}
       For any partition $\{ I_1,\cdots, I_m \}$
    of the set $[n] := \{ 1,\cdots, n \}$, the
    $\mathcal{C}^n_{I_1\cdots I_m}$ is
    homeomorphic to $\Delta^{n_1}\times\cdots \times \Delta^{n_m}$
    as a manifold with corners, where $n_i = |I_i|$, $1\leq i \leq m$
    and $n_1+\cdots + n_m =n$.
    \end{thm}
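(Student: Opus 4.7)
The plan is to first reduce the problem to the case of a single block by exploiting the product structure of the cube, and then to write down an explicit face-preserving homeomorphism from the resulting smoothed cube onto a simplex.

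For the reduction, decompose $\mathcal{C}^n_0 = \prod_{i=1}^m [0,\tfrac{1}{4}]^{I_i}$, so that the $i$-th factor is an $n_i$-dimensional cube. Every codim-two face $\overline{F}^*_l \cap \overline{F}^*_{l'}$ appearing in $\mathbf{f}_{I_1\cdots I_m}$ has $l, l'$ belonging to the same block $I_i$, and is therefore the product of a codim-two face of the $i$-th factor with the full cubes in the remaining factors. Hence the smoothing along $\mathbf{f}_{I_1\cdots I_m}$ respects the product decomposition and
\[
\mathcal{C}^n_{I_1\cdots I_m} \;\cong\; \prod_{i=1}^m \mathcal{C}^{n_i}_{\{1,\ldots,n_i\}}
\]
as manifolds with corners. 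It therefore suffices to treat the single-block case, i.e.\ to prove $\mathcal{C}^n_{\{1,\ldots,n\}}\cong \Delta^n$.

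For the single-block case, rescale the cube to $[0,1]^n$ and put $\Delta^n := \{y\in \R^n : y_j \geq 0,\ \sum_j y_j \leq 1\}$. Define $\psi : [0,1]^n \to \Delta^n$ by
\[
\psi(x) \;:=\; \frac{\max_j x_j}{\sum_j x_j}\cdot(x_1,\ldots,x_n) \quad \text{if } x\neq 0,\qquad \psi(0):=0.
\]
Since $\sum_j \psi(x)_j = \max_j x_j \leq 1$, the map takes values in $\Delta^n$; it is continuous and has the continuous inverse $y \mapsto \frac{\sum_j y_j}{\max_j y_j}\,y$ (with $0\mapsto 0$). Under $\psi$, the facet $\overline{F}_j = \{x_j = 0\}$ is sent bijectively onto the facet $\{y_j = 0\}$ of $\Delta^n$, and the merged facet $\widetilde{F}^*_{\{1,\ldots,n\}} = \bigcup_l \{x_l = 1\}$ is sent bijectively onto the remaining facet $\{\sum_j y_j = 1\}$. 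Thus $\psi$ carries the $n+1$ facets of $\mathcal{C}^n_{\{1,\ldots,n\}}$ onto the $n+1$ facets of $\Delta^n$, and since the face lattices of both spaces are determined by intersections of facets, $\psi$ induces a face-preserving homeomorphism.

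The main subtlety is checking that $\psi$ really realizes the smoothing: the original cube vertices that have been deleted by the smoothing, such as $(1,\ldots,1)$, must be sent into the relative interior of the ``opposite'' facet of $\Delta^n$, not to a simplex vertex. The formula above is engineered precisely for this, sending $(1,\ldots,1)\mapsto (\tfrac{1}{n},\ldots,\tfrac{1}{n})$, the barycenter of the facet $\{\sum_j y_j=1\}$. A routine induction on $n$, or equivalently restricting $\psi$ to boundary sub-cubes and reapplying the product reduction, then confirms that on every boundary face of $\mathcal{C}^n_{\{1,\ldots,n\}}$ the smoothed structure matches the face structure of the corresponding sub-simplex of $\Delta^n$.
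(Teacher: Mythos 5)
Your proof is correct, but it takes a genuinely different route from the paper's. The paper argues combinatorially: it defines a bijection $\Theta$ between the facets of $\mathcal{C}^n_{I_1\cdots I_m}$ and those of $\Delta^{n_1}\times\cdots\times\Delta^{n_m}$ directly for a general partition (no product reduction), checks that $\Theta$ induces an isomorphism of face lattices, and then invokes the fact that every face of $\mathcal{C}^n_{I_1\cdots I_m}$ is a ball to upgrade the lattice isomorphism to a homeomorphism of manifolds with corners. You instead reduce to a single block via the decomposition $\mathcal{C}^n_0\cong\prod_{i}[0,\tfrac{1}{4}]^{I_i}$ --- which is valid, since every face in $\mathbf{f}_{I_1\cdots I_m}$ is a codimension-two face of one factor times the remaining full cubes, so the smoothing factors --- and then produce the explicit homeomorphism $\psi(x)=\frac{\max_j x_j}{\sum_j x_j}\,x$, which does carry $\{x_j=0\}$ onto $\{y_j=0\}$ and the merged facet $\{\max_j x_j=1\}$ onto $\{\sum_j y_j=1\}$ (injectivity, surjectivity and two-sided continuity at $0$ all check out). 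What your approach buys is an explicit formula and no appeal to the slightly glossed-over principle that a face-lattice isomorphism between nice manifolds with corners with contractible ball faces yields a homeomorphism; what the paper's approach buys is uniformity over the partition and the facet bijection $\Theta$ itself, which is what gets reused in the following sections to transport characteristic functions. The one point worth making explicit in your write-up is that the faces of $\mathcal{C}^n_{\{1,\ldots,n\}}$ are exactly the nonempty intersections of its $n+1$ facets, so that a global homeomorphism matching facets with facets is automatically face-preserving; your closing ``routine induction'' is doing precisely this, and it is indeed routine because $\psi$ restricted to each $\{x_j=0\}$ is the same formula in one lower dimension.
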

   \begin{proof}
      We will borrow some notations in~\cite{SuyMasDong10}. Let $\{ v^i_0,\cdots, v^i_{n_i} \}$ be
     the set of all vertices of $\Delta^{n_i}$.
     Then each vertex of
     $\Delta^{n_1}\times\cdots \times \Delta^{n_m}$ can be uniquely
     written as a product of vertices from
     $\Delta^{n_i}$'s. Hence all the vertices
     of $\Delta^{n_1}\times\cdots \times \Delta^{n_m}$ are:
      \[  \{   \widetilde{v}_{j_1\ldots j_m}= v^1_{j_1}\times \cdots \times v^m_{j_m} \,
      | \ 0\leq j_i \leq n_i,\ i=1,\cdots, m \}. \]
    Each facet of $\Delta^{n_1}\times\cdots \times \Delta^{n_m}$ is
    the product of a codimension-one
    face of some $\Delta^{n_i}$ and the remaining simplices. So all
    the facets of $\Delta^{n_1}\times\cdots \times \Delta^{n_m}$ are:
      \[ \{ F^i_{k_i} \, | \ 0 \leq k_i \leq n_i,
      \ i=1,\cdots, m  \},  \]
   where $F^i_{k_i} = \Delta^{n_1}\times \cdots \times \Delta^{n_{i-1}} \times f^{i}_{k_i}
   \times \Delta^{n_{i+1}} \times \cdots \times \Delta^{n_m}$, and
   $f^{i}_{k_i}$ is the codimension-one face of the simplex $\Delta^{n_i}$
   which is opposite to the vertex $v^i_{k_i}$. So there are
   total of $m+n$ facets in $\Delta^{n_1}\times\cdots \times \Delta^{n_m}$.
   In addition, the $n$ facets which meet at the vertex
   $\widetilde{v}_{j_1\ldots j_m}$ are:
   $$\mathcal{S}_{F}(\Delta^{n_1}\times\cdots \times \Delta^{n_m})
    - \{ F^i_{j_i}\, | \ i=1,\cdots, m \}.$$

    In particular, the $n$ facets that meet at the vertex $\widetilde{v}_{0\ldots 0}$ are:
    $$\mathcal{S}_{F}(\Delta^{n_1}\times\cdots \times \Delta^{n_m}) - \{ F^i_0 \, | \ i=1,\cdots, m
   \} = \{ F^1_1, \cdots, F^1_{n_1},\cdots, F^m_1, \cdots, F^m_{n_m} \}. $$

   Next, we define a map $\Theta$ from the
   set of all facets of $\mathcal{C}^n_{I_1\cdots I_m}$ to the set of all
    facets of $\Delta^{n_1}\times\cdots \times \Delta^{n_m}$.
   Without loss of generality, we can assume that:
      \begin{align} \label{Equ:Partition-n}
        I_1 & = \{ 1, \cdots, n_1 \}, \ \ I_2 = \{ n_1 +1,\cdots, n_1 + n_2
           \}, \ \cdots    \notag \\
      & \cdots, I_m = \{ n_1+\cdots + n_{m-1}+1,\cdots, n \}.
      \end{align}
   First, we define $\Theta$ to map the facets of $\mathcal{C}^n_{I_1\cdots I_m}$
   meeting at the origin
   to the facets of $\Delta^{n_1}\times\cdots \times \Delta^{n_m}$
    meeting at $\widetilde{v}_{0\ldots 0}$ by:
   \begin{align*}
      \Theta (\widetilde{F}_1 ) &= F^1_1 ,\ \cdots\ , \
     \Theta(\widetilde{F}_{n_1})= F^1_{n_1} \\
          \Theta (\widetilde{F}_{n_1+1} ) & = F^2_1 , \ \cdots\ , \
     \Theta(\widetilde{F}_{n_1+n_2})= F^2_{n_2} \\
              & \cdots \qquad \cdots \qquad \cdots \\
         \Theta (\widetilde{F}_{n_1+\cdots + n_{m-1} +1} ) & = F^{m}_1 , \
         \cdots\ , \ \Theta(\widetilde{F}_{n_1+\cdots + n_{m-1} + n_m})= F^{m}_{n_m}
     \end{align*}
     where $n_1+\cdots + n_{m-1} + n_m =n$. For the remaining facets of $\mathcal{C}^n_{I_1\cdots I_m}$, we
     define:
        \[  \Theta(\widetilde{F}^*_{I_i}) = F^i_0, \ 1\leq i \leq m .\]
      By the definition of $\mathcal{C}^n_{I_1\cdots I_m}$,
     it is easy to check that $\Theta$ induces an
      isomorphism between the face lattices of $\mathcal{C}^n_{I_1\cdots I_m}$
      and $\Delta^{n_1}\times\cdots \times \Delta^{n_m}$. In addition, since any
      face of $\mathcal{C}^n_{I_1\cdots I_m}$ is homeomorphic to a ball, so
      $\mathcal{C}^n_{I_1\cdots I_m}$ is homeomorphic to
       $\Delta^{n_1}\times\cdots \times\Delta^{n_m}$ as a manifold
       with corners.
   \end{proof}
      \n

      By Lemma~\ref{Lem:Induced-Char-Func}, if a generalized characteristic
    function $\widetilde{\lambda}$ on $\mathcal{C}^n_0$ is
    compatible with the map $\psi_{[\mathbf{f}_{I_1\cdots I_m}]} :
    \mathcal{S}_F(\mathcal{C}^n_0) \rightarrow \mathcal{S}_F(\mathcal{C}^n_{I_1\cdots I_m})$
    (see~\eqref{Equ:f-I}),
     then there exists an equivariant homeomorphism from $M(\mathcal{C}^n_0,\widetilde{\lambda})$
    to
     $M(\mathcal{C}^n_{I_1\cdots I_m}, \widetilde{\lambda}[\mathbf{f}_{I_1\cdots I_m}])$.
      \\

     \section{Getting generalized real Bott manifolds from cubes} \label{Section9}

      An $n$-manifold $M^n$ is called a \textit{generalized real Bott manifold}
      (see~\cite{SuyMasDong10}) if
    there is a finite sequence of fiber bundles
   \begin{equation} \label{Equ:Gene-Real-Bott}
     M^n=B_m \overset{\pi_m}{\longrightarrow} B_{m-1} \overset{\pi_{m-1}}{\longrightarrow}
    \cdots \overset{\pi_2}{\longrightarrow} B_1 \overset{\pi_1}{\longrightarrow} B_0
   =\ \{ \text{a point} \},
   \end{equation}
   where each $B_i$ ($1\leq i \leq m$) is the projectivization of
   the Whitney sum of a finite collection of real line bundles over $B_{i-1}$.
  So the fiber of each $\pi_i : B_i \rightarrow B_{i-1}$ is a finite dimensional real
  projective space. It is known that any generalized real Bott manifold
   is a small cover over some product of simplices (see the Remark $6.5$ in~\cite{SuyMasDong10}).
      Indeed,  suppose the fiber of the bundle $\pi_i : B_i \rightarrow B_{i-1}$
      in~\eqref{Equ:Gene-Real-Bott} is
      homeomorphic to $\R P^{n_i}$ ($n_i \geq 1$). Then $M^n$ is a small cover
      over $\Delta^{n_1}\times \cdots \Delta^{n_m}$ where $\Delta^{n_i}$ is the standard
      $n_i$-dimensional simplex and $n_1+\cdots + n_m =n$.
      \nn

      In section~\ref{Section6}, we have seen that any real Bott
      manifolds can be obtained from a facets-pairing structure $\mathcal{F}_A$ on
      a cube where $A$ is a binary square matrix with zero diagonal.
   It is natural to ask if we can obtain
      any generalized real Bott manifold from some $\mathcal{F}_A$ too.
      In this section, we will see that the answer is also yes. In fact,
        the set of closed manifolds that we can
     obtain from the seal spaces of such $\mathcal{F}_A$'s is exactly
   the set of all generalized real Bott manifolds
    (see Theorem~\ref{thm:Main-6} and Theorem~\ref{thm:Main-7}). Moreover, we will give
     the necessary and sufficient condition on a binary square matrix $A$ so that
     the seal space of $\mathcal{F}_A$ is a closed manifold (see Theorem~\ref{thm:Main-8}).
     \nn

     In the following, we will think of a generalized real Bott manifold $M^n$ as
      a small cover over $\Delta^{n_1}\times \cdots \times \Delta^{n_m}$
      where $n_1 + \cdots + n_m =n$, and denote the corresponding characteristic function on
      $\Delta^{n_1}\times \cdots \times \Delta^{n_m}$ by $\lambda_{M^n}$.
      By Theorem~\ref{thm:Smoothing-Cube}, we can identify
      $\Delta^{n_1}\times \cdots \times \Delta^{n_m}$
      with $\mathcal{C}^n_{I_1\cdots I_m}$, where
      $I_1,\cdots, I_m$ are given by~\eqref{Equ:Partition-n}. So we
       can think of $\lambda_{M^n}$ as a characteristic function on
     $\mathcal{C}^n_{I_1\cdots I_m}$, and we have
         $$M^n \cong M(\mathcal{C}^n_{I_1\cdots I_m},
        \lambda_{M^n}). $$
   By our discussion in section~\ref{Section8}, the facets of $\mathcal{C}^n_{I_1\cdots I_m}$ are
     $\{ \widetilde{F}_1,\cdots, \widetilde{F}_n, \widetilde{F}^*_{I_1},
     \cdots, \widetilde{F}^*_{I_m} \}$.
      Since $\lambda_{M^n}$ is non-degenerate, we can assume
      $\lambda_{M^n} (\widetilde{F}_j) =
      e_j$ for each $1\leq  j \leq n$, where $\{ e_1,\cdots, e_n \} $
        is a linear basis of $(\Z_2)^n$. And we suppose
        \[ \lambda_{M^n}(\widetilde{F}^*_{I_i}) = \mathbf{a}_i \in
            (\Z_2)^n, \ 1\leq i \leq m.
          \]
      Then we have an $m\times n$ binary matrix $\mathbf{\Lambda}$.
      \[ \mathbf{\Lambda} =  \begin{pmatrix}
        \mathbf{a}_1     \\
         \vdots  \\
         \mathbf{a}_m
   \end{pmatrix},\quad \text{where each}\ \mathbf{a}_i \in (\Z_2)^n.
   \]
   \begin{align*}
    \text{We write}\ \, \mathbf{a}_i & =(\mathbf{a}^1_i,\cdots, \mathbf{a}^j_i, \cdots,
      \mathbf{a}^m_i) \\
        & = ([a^1_{i 1},\cdots, a^1_{i n_1}], \cdots, [a^j_{i 1},\cdots, a^j_{i n_j}],\cdots,
         [a^m_{i 1}, \cdots, a^m_{i n_m}]),
   \end{align*}
    where $\mathbf{a}^j_i =  [a^j_{i 1}, \cdots, a^j_{i n_j}]  \in (\Z_2)^{n_j}$
    for each $j=1,\cdots, m$. Then we have:
   \begin{align}
      \mathbf{\Lambda} &= \begin{pmatrix}
        \mathbf{a}_1     \\
         \vdots  \\
         \mathbf{a}_m
   \end{pmatrix} = \begin{pmatrix}
        \mathbf{a}^1_1 & \cdots & \mathbf{a}^m_1     \\
         \vdots & \cdots  & \vdots \\
         \mathbf{a}^1_m & \cdots & \mathbf{a}^m_m
   \end{pmatrix}  \label{Equ:Vector-Matrix}  \notag \\
      &=  \begin{pmatrix}
        a^1_{11} & \cdots &  a^1_{1 n_1} & \cdots   &  a^m_{11} & \cdots & a^m_{1 n_m}   \\
        \vdots & \cdots  & \vdots  & \cdots  & \vdots & \cdots  & \vdots \\
        a^1_{m 1} & \cdots & a^1_{m n_1} & \cdots & a^m_{m 1} & \cdots &  a^m_{m n_m}
   \end{pmatrix}.
   \end{align}
   So the matrix $\mathbf{\Lambda}$ can be viewed as an $m\times m$ matrix
   whose entries in the $j$-th column are vectors in $(\Z_2)^{n_j}$.
   Such a matrix $\mathbf{\Lambda}$ is called a \textit{vector matrix} (see~\cite{SuyMasDong10}).
   In addition, for given $1\leq k_j \leq n_j$, $j=1,\cdots, m$, let
    $\mathbf{\Lambda}_{k_1\cdots k_m}$ be the $m\times m$ submatrix of $\mathbf{\Lambda}$
    whose $j$-th column is the $k_j$-th column of the following $m\times n_j$
    matrix.
    \[   \begin{pmatrix}
         \mathbf{a}^j_1  \\
         \vdots  \\
         \mathbf{a}^j_m
   \end{pmatrix} =
      \begin{pmatrix}
        a^j_{11} & \cdots & a^j_{1 k_j} & \cdots & a^j_{1 n_j}   \\
        \vdots &  \  & \vdots  & \ & \vdots  \\
        a^j_{m 1} & \cdots & a^j_{m k_j} & \cdots & a^j_{m n_j}
   \end{pmatrix}
   \]
   \begin{equation} \label{Equ:Principal-minor}
     \text{So we have:} \ \;
       \mathbf{\Lambda}_{k_1\cdots k_m} =  \begin{pmatrix}
           a^1_{1 k_1} & \cdots & a^m_{1 k_m}    \\
           \vdots & \  & \vdots  \\
         a^1_{m k_1} & \cdots & a^m_{m k_m}
    \end{pmatrix} .  \qquad\qquad \qquad \qquad
    \end{equation}
  \nn

   A \textit{principal minor} of the $m\times m$ vector matrix $\mathbf{\Lambda}$
    in~\eqref{Equ:Vector-Matrix} means a principal minor of an
   $m\times m$ matrix $\mathbf{\Lambda}_{k_1\cdots k_m}$ for some
   $1\leq k_1 \leq n_1$, $\cdots, 1\leq k_m \leq n_m$. And the
   determinant of $\mathbf{\Lambda}_{k_1\cdots k_m}$ itself is also considered as
    a principal minor of $\mathbf{\Lambda}$. \nn

    The lemma 3.2
    in~\cite{SuyMasDong10} says that $\lambda_{M^n}$ is
    non-degenerate at all vertices of $\Delta^{n_1}\times\cdots \times \Delta^{n_m}$
    is exactly equivalent to all principal minors of $\mathbf{\Lambda}$ being $ 1$.
    This implies:\n

    \begin{itemize}

      \item[(c1)] $\mathbf{a}_1,\cdots , \mathbf{a}_m$ are pairwise different.\n

      \item[(c2)] in the vector $\mathbf{a}_i = (\mathbf{a}^1_i,\cdots, \mathbf{a}^m_i)$,
    we must have $\mathbf{a}^i_i = (1,1,\cdots , 1)$ for any $1\leq i \leq m$.
    \end{itemize}

    Now, from the $\mathbf{\Lambda}$ in~\eqref{Equ:Vector-Matrix},
     we define an $n\times n$ binary matrix $\widetilde{A}$
     by: the first row to $n_1$-th row vectors of $\widetilde{A}$ are all
     $\mathbf{a}_1$, the $(n_1 +1)$-th row to $(n_1 + n_2)$-th row vectors
     of $\widetilde{A}$ are all $\mathbf{a}_2$, $\cdots$, the $(n_1+\cdots + n_{m-1}
     +1)$-th row to the $n$-th row vectors of $\widetilde{A}$ are all $\mathbf{a}_m$.
     Then the condition (c2) above implies that all the diagonal entries of $\widetilde{A}$ are
     $1$. For this $\widetilde{A}$, define
      \begin{equation} \label{Equ:Def-A}
        A = \widetilde{A} - I_n .
       \end{equation}
   So $A$ is an $n\times n$ binary matrix with zero diagonal.
     \nn

     \begin{thm} \label{thm:Main-6}
      For a given generalized real Bott manifold $M^n$,
      the matrix $A$ defined in~\eqref{Equ:Def-A} satisfies:
      \begin{itemize}
        \item[(i)] $\mathcal{F}_A$ is a strong regular facets-pairing structure on
       $\mathcal{C}^n$, and
        \item[(ii)] the seal space $Q^n_{\mathcal{F}_A}$ is
       homeomorphic to $M^n$.
      \end{itemize}

   \end{thm}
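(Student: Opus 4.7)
The plan is to tackle (ii) first as a bookkeeping exercise that chains the lemmas of Sections~\ref{Section7} and~\ref{Section8}, and then to deduce (i) from the rank criterion of Theorem~\ref{thm:Main-5}.

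For (ii), I would first apply Lemma~\ref{Lem:Equiv-Equivalence} to identify $Q^n_{\mathcal{F}_A}$ equivariantly with $M(\mathcal{C}^n_0,\lambda_A)$. The crucial observation, built into the construction~\eqref{Equ:Def-A}, is that the rows of $\widetilde{A}$ indexed by $I_i$ all equal $\mathbf{a}_i$, so $\lambda_A(\overline{F}^*_j)=\lambda_A(\overline{F}^*_{j'})$ whenever $j,j'$ lie in the same block $I_i$. This is exactly the statement that $\lambda_A$ is compatible with the smoothing map $\psi_{[\mathbf{f}_{I_1\cdots I_m}]}$ of Section~\ref{Section8}, so Lemma~\ref{Lem:Induced-Char-Func} supplies an equivariant homeomorphism $M(\mathcal{C}^n_0,\lambda_A)\cong M(\mathcal{C}^n_{I_1\cdots I_m},\lambda_A[\mathbf{f}_{I_1\cdots I_m}])$. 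Theorem~\ref{thm:Smoothing-Cube} then identifies $\mathcal{C}^n_{I_1\cdots I_m}$ with $\Delta^{n_1}\times\cdots\times\Delta^{n_m}$ as a manifold with corners, and a direct comparison on the facets $\widetilde{F}_j$ and $\widetilde{F}^*_{I_i}$ shows that under this identification $\lambda_A[\mathbf{f}_{I_1\cdots I_m}]$ coincides with $\lambda_{M^n}$, yielding $Q^n_{\mathcal{F}_A}\cong M^n$.

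For (i), by Theorem~\ref{thm:Main-5} it suffices to verify, for every $1\le j_1<\cdots<j_s\le n$, the equality $\mathrm{rank}_{\Z_2}(\widetilde{A}^{j_1\cdots j_s}_{j_1\cdots j_s})=\mathrm{rank}_{\Z_2}(\widetilde{A}^{j_1\cdots j_s})$. Set $S=\{j_1,\ldots,j_s\}$ and $T=\{i:S\cap I_i\ne\varnothing\}$. Because every row of $\widetilde{A}$ lying in block $I_i$ equals $\mathbf{a}_i$, the distinct rows of $\widetilde{A}^{j_1\cdots j_s}$ are precisely $\{\mathbf{a}_i\}_{i\in T}$, and the distinct rows of $\widetilde{A}^{j_1\cdots j_s}_{j_1\cdots j_s}$ are $\{\mathbf{a}_i|_S\}_{i\in T}$ (the restriction to the coordinates in $S$). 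In particular both ranks are bounded above by $|T|$, and since selecting columns can only decrease rank, both will equal $|T|$ as soon as the family $\{\mathbf{a}_i|_S\}_{i\in T}$ is proved linearly independent over $\Z_2$.

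The linear independence claim is the main obstacle, and this is the point at which the non-degeneracy of $\lambda_{M^n}$ must be inserted. By Lemma~3.2 of~\cite{SuyMasDong10} (cited in the excerpt), non-degeneracy of $\lambda_{M^n}$ is equivalent to every principal minor of the vector matrix $\mathbf{\Lambda}$ being $1$. Suppose, toward a contradiction, that $\sum_{i\in T'}\mathbf{a}_i|_S=0$ for some nonempty $T'\subseteq T$. For each $j\in T'$ pick $k_j\in S\cap I_j$ (possible because $T'\subseteq T$), and write $k_j$ as the $p_j$-th index inside the block $I_j$. Evaluating the hypothesised relation at the coordinates $k_j$ shows that every column of the $|T'|\times |T'|$ matrix $\bigl((\mathbf{a}_i)_{k_j}\bigr)_{i,j\in T'}=\bigl(a^j_{i,p_j}\bigr)_{i,j\in T'}$ sums to zero, so its rows are $\Z_2$-linearly dependent and its determinant vanishes. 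But, after completing $p_j$ arbitrarily for $j\notin T'$, this matrix is exactly the $T'$-indexed principal minor of $\mathbf{\Lambda}_{p_1\cdots p_m}$, which by non-degeneracy equals $1$—a contradiction. This establishes the independence, proves (i), and completes the argument.
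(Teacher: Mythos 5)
Your proposal is correct and follows essentially the same route as the paper: part (ii) is the identical chain $Q^n_{\mathcal{F}_A}\cong M(\mathcal{C}^n_0,\lambda_A)\cong M(\mathcal{C}^n_{I_1\cdots I_m},\lambda_A[\mathbf{f}_{I_1\cdots I_m}])=M(\mathcal{C}^n_{I_1\cdots I_m},\lambda_{M^n})\cong M^n$, and part (i) is the same reduction to the rank criterion of Theorem~\ref{thm:Main-5} via principal minors of $\mathbf{\Lambda}$. Your contradiction argument for the linear independence of $\{\mathbf{a}_i|_S\}_{i\in T}$ is just a more explicit rendering of the step the paper dispatches with ``it is not hard to see that $\widetilde{A}^{j_1\cdots j_s}_{j_1\cdots j_s}$ can be realized as a principal minor matrix of $\mathbf{\Lambda}$.''
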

     \begin{proof}
        To show $\mathcal{F}_A$ is strong,
        it suffices by Theorem~\ref{thm:Main-5} to show that:
        for any $1\leq j_1 < \cdots < j_s \leq n$,
         $\mathrm{rank}_{\Z_2}(\widetilde{A}^{j_1\cdots  j_s}_{j_1\cdots j_s})
    = \mathrm{rank}_{\Z_2}(\widetilde{A}^{j_1\cdots j_s})$.
   First, we assume the $j_1$-th,$\cdots, j_s$-th row vectors of
   $\widetilde{A}$ are pairwise different. Then by our construction of $\widetilde{A}$,
    the integers $j_1 ,\cdots, j_s$ must lie in different intervals below:
    $$ [1, n_1],\ [n_1+1 ,n_2 ],\ \cdots\ ,[n_1+\cdots+ n_{m-1} + 1 , n ].$$
   Then it is not hard to see that
   $\widetilde{A}^{j_1\cdots  j_s}_{j_1\cdots j_s}$ can be realized as a principal
   minor matrix of $\mathbf{\Lambda}$, hence $\det(\widetilde{A}^{j_1\cdots  j_s}_{j_1\cdots j_s})=1$.
   So $\mathrm{rank}_{\Z_2}(\widetilde{A}^{j_1\cdots  j_s}_{j_1\cdots j_s})
    = \mathrm{rank}_{\Z_2}(\widetilde{A}^{j_1\cdots j_s})= s$.
   If the $j_1$-th,$\cdots, j_s$-th row vectors of
   $\widetilde{A}$ are not pairwise different, we
    assume that the $j_{i_1}$-th, $\cdots , j_{i_r}$-th row vectors
    of $\widetilde{A}$ are all the different vectors among them.
   Then by repeating the above argument, we can easily see that
   $\mathrm{rank}_{\Z_2}(\widetilde{A}^{j_1\cdots  j_s}_{j_1\cdots j_s})
    = \mathrm{rank}_{\Z_2}(\widetilde{A}^{j_1\cdots j_s})=r$. So
   $\mathcal{F}_A$ is a strong facets-pairing structure on $\mathcal{C}^n$. \n

   Next, we show that the seal space $Q^n_{\mathcal{F}_A}$
   is homeomorphic to $M^n$. In fact, by the definition of $A$, the generalized
   characteristic function $\lambda_A$ on $\mathcal{C}^n_0$ defined
   by~\eqref{Equ:Char-Function-1} and~\eqref{Equ:Char-Function-2}
   satisfies: $\lambda_A(\overline{F}^*_l) = \mathbf{a}_i = \lambda_{M^n}(\widetilde{F}^*_{I_i})$
    for $\forall\, l\in I_i$. So $\lambda_A$ is compatible with the
    map $\psi_{[\mathbf{f}_{I_1\cdots I_m}]} :
   \mathcal{S}_F(\mathcal{C}^n_0) \rightarrow \mathcal{S}_F(\mathcal{C}^n_{I_1\cdots
   I_m})$ where $\mathbf{f}_{I_1\cdots I_m}$ is defined
   by~\eqref{Equ:f-I}. Obviously, the induced function
   $\lambda_A[\mathbf{f}_{I_1\cdots I_m}]$ on $\mathcal{C}^n_{I_1\cdots I_m}$ from $\lambda_A$
   coincides with $\lambda_{M^n}$.
   So we have
    $$Q^n_{\mathcal{F}_A} \overset{\mathrm{Lem\,}\ref{Lem:Equiv-Equivalence}}{\cong} M(\mathcal{C}^n_0,
    \lambda_A) \overset{\mathrm{Lem}\ref{Lem:Induced-Char-Func}}{\cong}
     M(\mathcal{C}^n_{I_1\cdots I_m}, \lambda_A[\mathbf{f}_{I_1\cdots I_m}]) =
  M(\mathcal{C}^n_{I_1\cdots I_m}, \lambda_{M^n}) \cong M^n . $$
   Moreover, since the above homeomorphisms are all equivariant, so $Q^n_{\mathcal{F}_A}$ with the
  action of $H$ is equivariantly homeomorphic to $M^n$.
  \end{proof}
      \n

  For an arbitrary binary square matrix $A$ with zero diagonal,
   assuming $\mathcal{F}_A$ is strong can not guarantee the seal space
   $Q^n_{\mathcal{F}_A}$ is a closed manifold (see
   Example~\ref{Exam:Strang-Strong-2}). But if $Q^n_{\mathcal{F}_A}$ is
   a closed manifold, the following theorem asserts that
   $\mathcal{F}_A$ must be a strong regular facets-pairing structure and
    $Q^n_{\mathcal{F}_A}$ must be a generalized real Bott manifold. \nn

   \begin{thm} \label{thm:Main-7}
   For an $n\times n$ binary matrix $A$ with zero diagonal,
  if the seal space $Q^n_{\mathcal{F}_A}$ is a closed manifold,
   we must have:
  \begin{itemize}
    \item[(i)] $\mathcal{F}_A$ is a strong facets-pairing structure on $\mathcal{C}^n$,
    and\n
    \item[(ii)] $Q^n_{\mathcal{F}_A}$ is homeomorphic to a generalized real Bott manifold.
  \end{itemize}
   \end{thm}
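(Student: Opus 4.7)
The plan is to identify $Q^n_{\mathcal{F}_A}$ with the glue-back space $M(\mathcal{C}^n_0,\lambda_A)$ via Lemma~\ref{Lem:Equiv-Equivalence}, extract a combinatorial partition of $\{1,\ldots,n\}$ from the manifold hypothesis using Lemma~\ref{Lem:General-Char-Func-2}, verify the rank criterion of Theorem~\ref{thm:Main-5} to obtain strongness, and then smooth along a suitable set of codimension-two faces to land on a small cover over a product of simplices whose characteristic data has the vector-matrix form of a generalized real Bott manifold.

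Since $Q^n_{\mathcal{F}_A}\cong M(\mathcal{C}^n_0,\lambda_A)$ is assumed to be a closed manifold, I first apply Lemma~\ref{Lem:General-Char-Func-2} at the vertex $v_\ast:=\overline{F}^*_1\cap\cdots\cap\overline{F}^*_n$, where the $\lambda_A$-values are the rows $\mathbf{a}_1,\ldots,\mathbf{a}_n$ of $\widetilde{A}=A+I_n$. This forces a maximal linearly independent subset $\{\mathbf{b}_1,\ldots,\mathbf{b}_m\}\subset\{\mathbf{a}_1,\ldots,\mathbf{a}_n\}$ such that every $\mathbf{a}_i$ equals some $\mathbf{b}_k$; setting $I_k:=\{i:\mathbf{a}_i=\mathbf{b}_k\}$ partitions $\{1,\ldots,n\}$ into $m$ nonempty blocks. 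Since $\widetilde{A}^i_i=1$, the vector $\mathbf{b}_k$ carries a $1$ in every coordinate of $I_k$, which in particular prevents any collision $\mathbf{b}_k=e_l$ for $l\notin I_k$.

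For (i), fix $1\le j_1<\cdots<j_s\le n$ and let $r$ denote the number of distinct values among $\mathbf{a}_{j_1},\ldots,\mathbf{a}_{j_s}$; linear independence of the $\mathbf{b}_k$'s gives $\mathrm{rank}_{\Z_2}(\widetilde{A}^{j_1\cdots j_s})=r$. At the vertex $u_{j_1\cdots j_s}$ the labels of $\lambda_A$ are $\{e_l:l\notin\{j_1,\ldots,j_s\}\}$ together with the $r$ distinct $\mathbf{b}_k$'s appearing as the $\mathbf{a}_{j_i}$'s; by the preceding remark this total set is pairwise distinct, so Corollary~\ref{Cor:Mfd-General-Char-Func} forces it to be linearly independent. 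Quotienting by $\mathrm{span}\{e_l:l\notin\{j_1,\ldots,j_s\}\}$ then shows that the reduced rows of $\widetilde{A}^{j_1\cdots j_s}_{j_1\cdots j_s}$ have rank $r$, i.e.\ $\mathrm{rank}_{\Z_2}(\widetilde{A}^{j_1\cdots j_s}_{j_1\cdots j_s})=\mathrm{rank}_{\Z_2}(\widetilde{A}^{j_1\cdots j_s})$, and Theorem~\ref{thm:Main-5} yields strongness.

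For (ii), the partition makes $\lambda_A$ constant on fibers of $\psi_{[\mathbf{f}_{I_1\cdots I_m}]}$ (with $\mathbf{f}_{I_1\cdots I_m}$ defined by~\eqref{Equ:f-I}), so Lemma~\ref{Lem:Induced-Char-Func} gives an equivariant homeomorphism to $M(\mathcal{C}^n_{I_1\cdots I_m},\lambda_A[\mathbf{f}_{I_1\cdots I_m}])$, and Theorem~\ref{thm:Smoothing-Cube} identifies the base with $\Delta^{n_1}\times\cdots\times\Delta^{n_m}$. The vertices of this product correspond precisely to those $u_T\subset\mathcal{C}^n_0$ with $|T\cap I_k|\le 1$ for all $k$, where the $\lambda_A$-labels are pairwise distinct (each block contributes at most one $\mathbf{b}_k$); the argument of the previous paragraph then shows the induced function is non-degenerate at every vertex of the product. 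By Lemma~3.2 of~\cite{SuyMasDong10}, the resulting $m\times m$ vector matrix $\mathbf{\Lambda}$, whose entries are the block decompositions $\mathbf{a}^k_l\in(\Z_2)^{n_k}$ of the $\mathbf{b}_l$'s, has all principal minors equal to $1$, which is exactly the datum presenting $Q^n_{\mathcal{F}_A}$ as a generalized real Bott manifold in the sense recalled at the start of Section~\ref{Section9}. The main obstacle I anticipate is the vertex-by-vertex bookkeeping in the strongness step: one must keep track simultaneously of which rows of $\widetilde{A}$ coincide, which reduced coordinates remain visible, and how the manifold hypothesis of Lemma~\ref{Lem:General-Char-Func-2} translates into the algebraic rank equality required by Theorem~\ref{thm:Main-5}.
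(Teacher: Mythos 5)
Your argument follows essentially the same route as the paper: identify $Q^n_{\mathcal{F}_A}$ with $M(\mathcal{C}^n_0,\lambda_A)$ via Lemma~\ref{Lem:Equiv-Equivalence}, extract the partition $I_1,\cdots,I_m$ from Lemma~\ref{Lem:General-Char-Func-2} applied at the vertex $\overline{F}^*_1\cap\cdots\cap\overline{F}^*_n$, smooth along $\mathbf{f}_{I_1\cdots I_m}$ to land on $\Delta^{n_1}\times\cdots\times\Delta^{n_m}$, and use the fact that $\widetilde{A}^j_j=1$ forces $\mathbf{a}_j\neq e_l$ so that the labels at every surviving vertex are pairwise distinct, whence Corollary~\ref{Cor:Mfd-General-Char-Func} gives non-degeneracy and hence a generalized real Bott manifold. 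The only substantive deviation is in part (i): you verify the rank criterion of Theorem~\ref{thm:Main-5} by a direct vertex-by-vertex computation on $\mathcal{C}^n_0$, whereas the paper first records that all principal minors of the vector matrix $\mathbf{\Lambda}_A$ equal $1$ and then reruns the argument from the proof of Theorem~\ref{thm:Main-6}; your version is self-contained and works. One citation does need repair: at a vertex $u_{j_1\cdots j_s}$ with $r<s$, the $n$ values of $\lambda_A$ on the facets through that vertex are \emph{not} pairwise distinct (the $\mathbf{a}_{j_i}$ repeat within blocks), so Corollary~\ref{Cor:Mfd-General-Char-Func} does not apply as stated. What you actually need --- that the set of \emph{distinct} labels at that vertex is linearly independent --- is precisely the content of Lemma~\ref{Lem:General-Char-Func-2} (every label must coincide with a member of a maximal linearly independent subset), after which your quotienting argument goes through unchanged. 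A cosmetic point in the same vein: your remark ``$\mathbf{b}_k\neq e_l$ for $l\notin I_k$'' should be upgraded to ``$\mathbf{a}_j\neq e_l$ whenever $j\neq l$'' (immediate from $\widetilde{A}^j_j=1$), since in the distinctness check one also meets $e_l$ with $l\in I_k\setminus T$.
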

   \begin{proof}
    By our discussion at the beginning of this section, we can identify
       $Q^n_{\mathcal{F}_A}$ with $M(\mathcal{C}^n_0,\lambda_A)$.
      If $Q^n_{\mathcal{F}_A}$ is a manifold,
       then at any vertex $u_{j_1\cdots j_s}$ of $\mathcal{C}^n_0$,
        $\lambda_A$
       must satisfy the condition in Lemma~\ref{Lem:General-Char-Func-2}.
      In particular, at the vertex $u_{12\cdots n}$, all the facets of $\mathcal{C}^n_0$
      meeting $u_{12\cdots n}$ are $\overline{F}^*_1,\cdots, \overline{F}^*_n$. So
     Lemma~\ref{Lem:General-Char-Func-2}
       implies that by re-indexing the coordinates of $\R^n$,
    we can assume:
    \begin{align*}
      \lambda_A(\overline{F}^*_1) = \cdots & = \lambda_A(\overline{F}^*_{n_1}) = \mathbf{a}_1  , \\
     \lambda_A(\overline{F}^*_{n_1+1}) = \cdots & = \lambda_A(\overline{F}^*_{n_1+n_2}) = \mathbf{a}_2 , \\
      \cdots  \qquad & \cdots \qquad \cdots \\
     \lambda_A(\overline{F}^*_{n_1+\cdots + n_{m-1}}) = \cdots
      &= \lambda_A(\overline{F}^*_{n_1+\cdots + n_{m-1}+ n_m})
     = \mathbf{a}_m,
     \end{align*}
   where $ n_1 + \cdots + n_m = n$ and $\mathbf{a}_1,\cdots, \mathbf{a}_m$
   are linearly independent elements of $(\Z_2)^n$. Then let
   $I_1,\cdots, I_m$ be the partition of $\{ 1,\cdots, n\}$
   defined by~\eqref{Equ:Partition-n} and $\mathbf{f}_{I_1\cdots I_m}$ be the set of
   codimension-two faces of $\mathcal{C}^n$ defined
   by~\eqref{Equ:f-I}. Obviously, $\lambda_A$ is compatible with
   the smooth of $\mathcal{C}^n_0$ along $\mathbf{f}_{I_1\cdots I_m}$. So by
    Lemma~\ref{Lem:Induced-Char-Func},
   $M(\mathcal{C}^n_0,\lambda_A)$ is homeomorphic to
   $M(\mathcal{C}^n_{I_1\cdots I_m}, \lambda_A[\mathbf{f}_{I_1\cdots I_m}])$,
    where $\lambda_A[\mathbf{f}_{I_1\cdots I_m}]$
   is the induced function by $\lambda_A$ on the facets of $\mathcal{C}^n_{I_1\cdots I_m}$.
   We have:
          $$  \lambda_A[\mathbf{f}_{I_1\cdots I_m}](\widetilde{F}^*_{I_i}) =
           \mathbf{a}_i, \ 1\leq i \leq m \ \,
           \mathrm{(see~\eqref{Equ:Induced-Function})}.
          $$
    \[ \mathrm{Let}\ \ \mathbf{\Lambda}_A =  \begin{pmatrix}
        \mathbf{a}_1     \\
         \vdots  \\
         \mathbf{a}_m
     \end{pmatrix},\quad \text{where each}\ \mathbf{a}_i \in (\Z_2)^n.
   \]

   By Theorem~\ref{thm:Smoothing-Cube},
   $\mathcal{C}^n_{I_1\cdots I_m} \cong \Delta^{n_1}\times \cdots \times \Delta^{n_m}$.
   So to prove $Q^n_{\mathcal{F}_A}$ is homeomorphic to a generalized real Bott manifolds, it suffices to
   show that the function $\lambda_A[\mathbf{f}_{I_1\cdots I_m}]$ is non-degenerate at all vertices of
   $\mathcal{C}^n_{I_1\cdots I_m}$. Recall that all the facets
   of $\mathcal{C}^n_0$ meeting at a vertex $u_{j_1\cdots j_s}$ are:
    $$ \overline{F}^*_{j_1}, \cdots,
      \overline{F}^*_{j_s}, \overline{F}_{l_1} , \cdots, \overline{F}_{l_{n-s}},
      \  \text{where}\ \{ l_1,\cdots, l_{n-s}\}
       = \{ 1,\cdots, n \} \backslash \{ j_1,\cdots, j_s \}
      $$
   A critical observation here is that: $\lambda_A (\overline{F}^*_j) \neq
   \lambda_A(\overline{F}_l)$ for $\forall\, j\in \{ j_1,\cdots, j_s \}$ and $\forall\, l\in \{ l_1,\cdots,
   l_{n-s}\}$ (see the definition of $\lambda_A$ in~\eqref{Equ:Char-Function-1}
    and~\eqref{Equ:Char-Function-2}).  So by the definition of
    $\lambda_A[\mathbf{f}_{I_1\cdots I_m}]$,
   at any vertex $\widetilde{v}$ of
    $\mathcal{C}^n_{I_1\cdots I_m}$
    the value of $\lambda_A[\mathbf{f}_{I_1\cdots I_m}]$ on all the
    facets meeting at $\widetilde{v}$ are pairwise
    distinct.
    $$ \text{Then since}\ M(\mathcal{C}^n_{I_1\cdots I_m}, \lambda_A[\mathbf{f}_{I_1\cdots I_m}])
    \cong M(\mathcal{C}^n_0,\lambda_A) \cong Q^n_{\mathcal{F}_A} \ \text{is a closed manifold},
     $$
    Corollary~\ref{Cor:Mfd-General-Char-Func} asserts that
    $\lambda_A[\mathbf{f}_{I_1\cdots I_m}]$ must be non-degenerate at any vertex $\widetilde{v}$
    of $\mathcal{C}^n_{I_1\cdots I_m}$. So
     $Q^n_{\mathcal{F}_A}$ is homeomorphic to a generalized real Bott manifolds.\n

   Moreover, the non-degeneracy of  $\lambda_A[\mathbf{f}_{I_1\cdots I_m}]$ at all vertices
   of $\mathcal{C}^n_{I_1\cdots I_m}$ implies that
   all the principal minors of $\mathbf{\Lambda}_A$
  are $1$, where $\mathbf{\Lambda}_A$ is considered as an $m\times m$ vector matrix.
   Then for the
   matrix $\widetilde{A}= A+I_n$, we can repeat the
   argument in the proof of Theorem~\ref{thm:Main-6} to show that
  for any $1\leq j_1 < \cdots < j_s \leq n$,
   $\mathrm{rank}_{\Z_2}(\widetilde{A}^{j_1\cdots  j_s}_{j_1\cdots j_s})
    = \mathrm{rank}_{\Z_2}(\widetilde{A}^{j_1\cdots j_s})$.
  So by Theorem~\ref{thm:Main-5}, we can conclude that $\mathcal{F}_A$
  is a strong facets-pairing structure on $\mathcal{C}^n$.
   \end{proof}

  By Theorem~\ref{thm:Main-6} and
   Theorem~\ref{thm:Main-7}, we see that the set of closed manifolds that
  occur as the seal spaces of
   $\mathcal{F}_A$'s are exactly all the generalized real Bott
  manifolds. This gives another reason why generalized real Bott manifolds
  are naturally the ``extension'' of real Bott manifolds.
   In addition, from the proof of
  Theorem~\ref{thm:Main-6} and
   Theorem~\ref{thm:Main-7}, we can easily see the following.\nn

 \begin{thm}\label{thm:Main-8}
    For any $n\times n$ binary matrix $A$ with zero diagonal, the space $Q^n_{\mathcal{F}_A}$
    is a closed manifold if and only if the matrix $\widetilde{A} =  A+ I_n$
    satisfies:\n
       \begin{enumerate}
         \item for any $1\leq j_1 < \cdots < j_s \leq n$,
           $\mathrm{rank}_{\Z_2}(\widetilde{A}^{j_1\cdots  j_s}_{j_1\cdots j_s})
    = \mathrm{rank}_{\Z_2}(\widetilde{A}^{j_1\cdots j_s})$.\n

         \item For any set of row vectors $\alpha_1,\cdots, \alpha_s$ of $\widetilde{A}$, if
         $\alpha_1,\cdots, \alpha_s$
          are pairwise different, then they are linearly independent over
            $\Z_2$.
       \end{enumerate}
 \end{thm}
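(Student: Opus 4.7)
The plan is to deduce both directions from the equivalence already established in Theorem~\ref{thm:Main-6} and Theorem~\ref{thm:Main-7}, together with Lemma~\ref{Lem:Equiv-Equivalence} which identifies $Q^n_{\mathcal{F}_A}$ with the glue-back space $M(\mathcal{C}^n_0,\lambda_A)$, and Lemma~\ref{Lem:General-Char-Func-2}/Corollary~\ref{Cor:Mfd-General-Char-Func} which give a local-combinatorial criterion for such a glue-back space to be a manifold.

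For the necessity direction (assuming $Q^n_{\mathcal{F}_A}$ is a closed manifold), I would essentially reproduce the argument inside the proof of Theorem~\ref{thm:Main-7}. Applying Lemma~\ref{Lem:General-Char-Func-2} at the vertex $u_{12\cdots n}$ of $\mathcal{C}^n_0$ (where all facets meeting are $\overline{F}^*_1,\ldots,\overline{F}^*_n$) forces the values $\lambda_A(\overline{F}^*_j)$, which are exactly the rows of $\widetilde{A}$, to have the property that any set of pairwise distinct ones is linearly independent; this is condition (2). Next, the induced partition $\{I_1,\ldots,I_m\}$ of $\{1,\ldots,n\}$ (by which rows of $\widetilde{A}$ agree) makes $\lambda_A$ compatible with the smoothing along $\mathbf{f}_{I_1\cdots I_m}$, so Lemma~\ref{Lem:Induced-Char-Func} identifies $Q^n_{\mathcal{F}_A}$ with $M(\mathcal{C}^n_{I_1\cdots I_m},\lambda_A[\mathbf{f}_{I_1\cdots I_m}])$. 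Using Corollary~\ref{Cor:Mfd-General-Char-Func} at every vertex of this smoothed polytope, the induced function is non-degenerate there, which is equivalent to every principal minor of the vector matrix $\mathbf{\Lambda}_A$ being $1$. Translating this into a statement about the original $\widetilde{A}$ via the calculation in the proof of Theorem~\ref{thm:Main-6} gives condition (1).

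For the sufficiency direction, suppose (1) and (2) hold. Let $\mathbf{a}_1,\ldots,\mathbf{a}_m$ be the distinct row vectors of $\widetilde{A}$ and $I_i$ the set of row indices mapping to $\mathbf{a}_i$, so $\{I_1,\ldots,I_m\}$ partitions $\{1,\ldots,n\}$. As before, $\lambda_A$ is compatible with the smoothing along $\mathbf{f}_{I_1\cdots I_m}$, and the induced function on $\mathcal{C}^n_{I_1\cdots I_m}\cong\Delta^{n_1}\times\cdots\times\Delta^{n_m}$ assigns the standard basis vectors to the $\widetilde{F}_j$'s and $\mathbf{a}_i$ to $\widetilde{F}^*_{I_i}$. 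The key task is to show this induced function is non-degenerate at every vertex. At the vertex where all the $\widetilde{F}^*_{I_i}$'s meet, non-degeneracy is exactly condition (2) applied to the full set of distinct rows of $\widetilde{A}$. At a general vertex, some $\widetilde{F}^*_{I_i}$'s are replaced by some $\widetilde{F}_j$'s whose labels are the corresponding $e_j$'s; non-degeneracy at such a vertex boils down to the non-singularity of an appropriate principal minor of $\mathbf{\Lambda}_A$, which by condition (1) (identifying this principal minor with $\widetilde{A}^{j_1\cdots j_s}_{j_1\cdots j_s}$ for a suitable choice of indices, and noting that (2) forces full rank of the distinct-rows matrix) must be $1$. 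Hence the glue-back space is a small cover over $\Delta^{n_1}\times\cdots\times\Delta^{n_m}$, hence a closed (generalized real Bott) manifold.

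The main obstacle I expect is the bookkeeping in the sufficiency direction: one must carefully match principal minors of the $m\times m$ vector matrix $\mathbf{\Lambda}_A$ against principal minors $\det(\widetilde{A}^{j_1\cdots j_s}_{j_1\cdots j_s})$ of the enlarged $n\times n$ matrix, checking that picking one representative index from each $I_i$ (as dictated by a vertex of the smoothed polytope) produces exactly a principal minor that condition (1) and (2) together force to be non-degenerate. Once this correspondence is set up cleanly, both directions reduce to the already proved facts about when $\lambda_A[\mathbf{f}_{I_1\cdots I_m}]$ is a genuine (non-degenerate) characteristic function on a product of simplices.
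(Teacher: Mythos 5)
Your proposal is correct and takes essentially the same route as the paper, which itself gives no standalone argument but states that the theorem follows from the proofs of Theorem~\ref{thm:Main-6} and Theorem~\ref{thm:Main-7}; your write-up is precisely an unpacking of that, with the key point in the sufficiency direction (condition (2) forces the matrix of distinct rows of $\widetilde{A}$ to have full rank, and condition (1) transfers this to each principal minor matrix $\widetilde{A}^{k_{i_1}\cdots k_{i_t}}_{k_{i_1}\cdots k_{i_t}}$ obtained by choosing one representative index per block $I_i$) handled correctly.
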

 \n

  In addition, if we view a generalized real Bott manifold as the seal space of a
  facets-pairing structure $\mathcal{F}_A$ on a cube, we can easily tell when
  it is orientable from the information of $A$. The following
  theorem generalizes the Lemma 2.2 in~\cite{MasKam09-1}. \nn

  \begin{thm} \label{thm:orientability}
     For an $n$-dimensional generalized real Bott manifold
      $Q^n_{\mathcal{F}_A}$ where $A$ is an $n\times n$ binary matrix with zero diagonal
      entries,  $Q^n_{\mathcal{F}_A}$ is orientable if and only if the
      sum of the entries in each row vector of $A$ is zero over $\Z_2$.
  \end{thm}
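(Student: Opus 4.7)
The plan is to reduce the orientability of $Q^n_{\mathcal{F}_A}$ to a local sign condition on each structure map $\tau^A_j$, and then to read off that sign condition directly from the explicit formula in equation~\eqref{Equ:tau_j-Cube}. Since $Q^n_{\mathcal{F}_A}$ is assumed to be a closed manifold, the seal map $\zeta_{\mathcal{F}_A}\colon \mathcal{C}^n\to Q^n_{\mathcal{F}_A}$ is a homeomorphism on $\mathrm{int}(\mathcal{C}^n)$, and its image is a dense open subset of $Q^n_{\mathcal{F}_A}$. I would begin by transporting the standard Euclidean orientation of $\mathcal{C}^n\subset \R^n$ to this dense open set; orientability of $Q^n_{\mathcal{F}_A}$ is then equivalent to asking whether this local orientation extends across the codimension-one strata $\zeta_{\mathcal{F}_A}(\mathbf{F}(j))=\zeta_{\mathcal{F}_A}(\mathbf{F}(-j))$, since the remaining identified strata (images of faces of $\mathcal{C}^n$ of codimension $\geq 2$) form a set of codimension $\geq 2$ and therefore cannot obstruct the orientation.

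Next I would compute the relevant sign for a fixed pair $\widehat{\mathbf{F}}(j)=\{\mathbf{F}(j),\mathbf{F}(-j)\}$ with $1\leq j\leq n$. From \eqref{Equ:tau_j-Cube}, the structure map $\tau^A_j$ is the restriction of the linear involution $\widetilde{\tau}^A_j\in H$ acting diagonally on $\R^n$ by $x_{|j|}\mapsto -x_{|j|}$ and $x_i\mapsto (-1)^{A^j_i}x_i$ for $i\neq j$. Hence on the tangent space of $\mathbf{F}(j)$, spanned by $\{e_i:i\neq j\}$, the differential of $\tau^A_j$ is diagonal with eigenvalues $(-1)^{A^j_i}$, whose determinant is
\[
\det(d\tau^A_j)=\prod_{i\neq j}(-1)^{A^j_i}=(-1)^{\sum_{i=1}^n A^j_i},
\]
where the last equality uses $A^j_j=0$. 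On the other hand, the outward-normal-first convention gives boundary orientations on $\mathbf{F}(j)$ and $\mathbf{F}(-j)$ (induced from $\mathcal{C}^n$) that differ by a factor of $-1$ when compared in the common $(n-1)$-dimensional frame $(e_1,\ldots,\widehat{e_j},\ldots,e_n)$, because the outward normals $+e_j$ and $-e_j$ are opposite.

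Combining these two calculations, $\tau^A_j$ reverses the induced boundary orientations if and only if $(-1)^{\sum_i A^j_i}=1$, equivalently the $j$-th row sum of $A$ is $0\pmod 2$. Applying the classical principle that gluing disjoint boundary pieces of an oriented manifold yields an oriented quotient precisely when the gluing map reverses boundary orientations, and observing that the pairing relation $\tau^A_{-j}=(\tau^A_j)^{-1}$ makes the condition for $j$ and $-j$ identical, I conclude that $Q^n_{\mathcal{F}_A}$ is orientable if and only if every row of $A$ sums to zero over $\Z_2$.

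The step I expect to be the main obstacle is the rigorous justification of the codimension argument in the first paragraph: strictly speaking, the standard ``boundary-gluing'' orientability principle is stated for manifolds with boundary, whereas $\mathcal{C}^n$ is a manifold with corners and $\zeta_{\mathcal{F}_A}$ collapses faces of all codimensions. I would handle this by appealing to the explicit local model built in Section~\ref{Section5}: at the relative interior of a codimension-$s$ face, the seal map is modelled on gluing $2^s$ right-angled cones to form a neighborhood in $\R^n$ via the reflections $\{\beta_l\}$. Since each $\beta_l$ is a reflection (determinant $-1$) that acts coherently across these cones, the lower-codimension identifications do not create any additional orientation obstruction beyond what the facet gluings already impose. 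Thus orientability of $Q^n_{\mathcal{F}_A}$ is genuinely controlled pairwise by the $\tau^A_j$, and the row-sum condition is both necessary and sufficient.
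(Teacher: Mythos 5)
Your proposal is correct and follows essentially the same route as the paper: orient $\mathcal{C}^n$, induce boundary orientations on the facets via the outward normal, and observe that $Q^n_{\mathcal{F}_A}$ is orientable iff each $\tau^A_j$ reverses these induced orientations, which by \eqref{Equ:tau_j-Cube} amounts to $(-1)^{\sum_i A^j_i}=1$. The only difference is that you spell out the determinant computation and the codimension-$\geq 2$ extension argument that the paper leaves implicit.
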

  \begin{proof}
   Let $\{ \tau^A_j : \mathbf{F}(j) \rightarrow \mathbf{F}(-j) \}_{j\in [\pm n]}$ be the structure
   maps of the facets-pairing structure $\mathcal{F}_A$. Given an orientation of $\mathcal{C}^n$,
   we can orient each facet of $\mathcal{C}^n$ by the outward
   normal. So $Q^n_{\mathcal{F}_A}$ is orientable
   if and only if each $\tau^A_j$ is orientation-reversing with
   respect to the induced orientation on $\mathbf{F}(j)$ and
   $\mathbf{F}(-j)$. It is easy to see that this is equivalent to
   requiring the sum of all the entries in each row vector of $A$ to be
   zero.
  \end{proof}
  \n

\begin{rem}
   It was shown in~\cite{MasKam09-1} that two real Bott manifolds
   are homeomorphic if and only if their $\Z_2$-cohomology rings are
   isomorphic. This is called \textit{cohomological rigidity} of real
   Bott manifolds. But for generalized real Bott manifolds,
   cohomological rigidity does not hold (see~\cite{Masuda2010}). In fact, it was shown in
   ~\cite{Masuda2010} that there exist two
   generalized real Bott manifolds with the same $\Z_2$-cohomology rings,
   Stiefel-Whitney classes and homotopy groups, but they are not homeomorphic.  So
   we need to use extra topological invariants
   to distinguish the homeomorphism types of generalized real Bott
    manifolds. A possible candidate is the integral homology groups.
    But in general, the calculation of integral homology groups for small
    covers is quite difficult.
 \end{rem}

 \n

 \noindent \textbf{Summary:} Let $A$ denote any $n\times n$ binary matrix with zero diagonal.
  According to our discussion in this paper,
   we have the following correspondence between the facets-pairing
  structures
  $\mathcal{F}_A$ on $\mathcal{C}^n$ and the associated seal spaces
  $Q^n_{\mathcal{F}_A}$.

  \[ \ \ \,\text{Facets pairing structure $\mathcal{F}_A$ on $\mathcal{C}^n$} \qquad \qquad \ \qquad
  \qquad  \text{Seal space $Q^n_{\mathcal{F}_A}$} \qquad \qquad  \qquad \,\ \  \ \
  \]
  \begin{align*}
    \text{$\mathcal{F}_A$ is strong }\ \qquad\qquad\quad \
   & \longleftrightarrow \qquad  \quad\ \text{Real toric orbifolds} \\
          \bigcup \ \ \qquad \qquad\qquad\quad\ & \qquad
         \qquad\qquad \quad \ \ \ \ \ \,\,\ \bigcup \\
    \text{$\mathcal{F}_A$ is strong and $Q^n_{\mathcal{F}_A}$ is a manifold} \  \ \
    & \longleftrightarrow \ \ \ \text{Generalized real Bott manifolds} \\
       \bigcup \ \ \qquad \qquad\qquad\quad\ & \qquad
         \qquad\qquad \quad \ \ \ \ \ \,\,\ \bigcup \\
       \text{$A$ is a Bott matrix $\Longleftrightarrow \mathcal{F}_A$ is perfect} \
    & \longleftrightarrow
    \ \text{Real Bott manifolds (Rieman. flat)}
  \end{align*}
 \nn

  \n

 \begin{rem}
 For two
  non-equivalent facets-pairing structures $\mathcal{F}_{A_1}$ and
  $\mathcal{F}_{A_2}$, it is possible that their seal spaces  $Q^n_{\mathcal{F}_{A_1}}$ and  $Q^n_{\mathcal{F}_{A_2}}$
  are homeomorphic.
  For example when $A_1, A_2 \in \mathfrak{B}(n)$, Theorem~\ref{thm:F_A-Classify} says that
   $\mathcal{F}_{A_1}$ and $\mathcal{F}_{A_2}$ are equivalent if and only if
   $A_1$ is conjugate to $A_2$ by a permutation matrix. But it was shown in~\cite{SuMasOum10}
  that the seal space $Q^n_{\mathcal{F}_{A_1}}$
   and $Q^n_{\mathcal{F}_{A_2}}$ (two real Bott manifolds) are homeomorphic if and only
  if $A_1$ can be transformed to $A_2$ via three
  types of matrix operations. The conjugation by
  a permutation matrix is just one of the three types of matrix
  operations.\nn

   Moreover, it is interesting to know what kind of closed manifolds (e.g.
  closed flat Riemannian manifolds)
   we can obtain from
   facets-pairing structures on cubes other than the $\mathcal{F}_A$'s.
  Of course, the answer should be much harder than the $\mathcal{F}_A$ case.
  We will study more about this problem in a subsequent paper.
  \end{rem}

 \noindent  \textbf{Acknowledgement:} The author wants to thank professor
      Mikiya Masuda for some helpful discussions.

\end{document}